\definecolor{hrefcolor}{rgb}{0.0,0.5,0.8}
\definecolor{hlgreen}{rgb}{0,0.7,0}
\newenvironment{enumroman}
    {
     
     \begin{enumerate}
        \setlength{\leftmargin}{3.0em}
        \setlength{\labelwidth}{2.5em}
        \setlength{\labelsep}{0.5em}
    }
    {\end{enumerate}}
\newcounter{remcount}
\newtheorem{theorem}{Theorem}
\newtheorem{corollary}{Corollary}
\newtheorem{lemma}{Lemma}
\newtheorem{proposition}{Proposition}
\theoremstyle{definition}
\newtheorem{definition}{Definition}
\newtheorem*{assumption*}{Assumption}
\newtheorem{remark}{Remark}
\newtheorem*{remark*}{Remark}
\newtheorem*{definition*}{Definition}
\newtheorem{example}{Example}
\numberwithin{equation}{section}
\numberwithin{lemma}{section}
\numberwithin{theorem}{section}
\numberwithin{proposition}{section}
\numberwithin{definition}{section}
\numberwithin{remark}{section}
\numberwithin{example}{section}
\numberwithin{assumption}{section}
\numberwithin{algorithm}{section}
\numberwithin{corollary}{section}
\newcommand*{\doi}[1]{doi:\href{http://dx.doi.org/#1}{\detokenize{#1}}}
\newcommand{\field}[1]{\mathbb{#1}}
\newcommand{\R}{\field{R}}
\newcommand{\C}{\field{C}}
\newcommand{\norm}[1]{\|#1\|}
\newcommand{\abs}[1]{|#1|}
\newcommand{\inv}[1]{#1^{-1}}
\newcommand{\grad}[1]{\nabla #1}
\newcommand{\freevar}{\,\boldsymbol\cdot\,}
\newcommand{\Union}\bigcup
\newcommand{\Isect}\bigcap
\newcommand{\union}\cup
\newcommand{\isect}\cap
\newcommand{\bigunion}\bigcup
\newcommand{\bigisect}\bigcap
\newcommand{\defeq}{:=}
\newcommand{\downto}{\searrow}
\newcommand{\upto}{\nearrow}
\newcommand{\subdiff}{\partial}
\newcommand{\rangeSymbol}{\mathcal{R}}
\newcommand{\range}[1]{\rangeSymbol(#1)}
\DeclareMathOperator*{\argmin}{arg\,min}
\DeclareMathOperator{\interior}{int}
\DeclareMathOperator{\Dom}{dom}
\def \uminus@sym{\setbox0=\hbox{$\cup$}\rlap{\hbox 
        to\wd0{\hss\raise0.5ex\hbox{$\scriptscriptstyle{-}$}\hss}}\box0}
    \def \uminus    {\mathrel{\uminus@sym}}
\newcommand{\mathvar}[1]{\textup{#1}}
\renewcommand{\tilde}{\widetilde}
\newcommand{\iprod}[2]{\langle #1,#2\rangle}
\newcommand{\BDspace}{\mathvar{BD}}
\newcommand{\BVspace}{\mathvar{BV}}
\newcommand{\Meas}{\mathcal{M}}
\renewcommand{\L}{\mathcal{L}}
\newcommand{\BD}{\partial}
\renewcommand{\d}{\,d} %\text{d}}
\newcommand{\TGV}{\mathvar{TGV}}
\newcommand{\TV}{\mathvar{TV}}
\DeclareMathOperator{\divergence}{div}
\newcommand{\weakto}{\mathrel{\rightharpoonup}}
\def \weaktostar@sym{\setbox0=\hbox{$\rightharpoonup$}\rlap{\hbox 
        to\wd0{\hss\raise1ex\hbox{$\scriptscriptstyle{*\,}$}\hss}}\box0}
    \def \weaktostar    {\mathrel{\weaktostar@sym}}
\theoremstyle{definition}
\newtheorem{assumptionx}{Assumption}
\newenvironment{namedassumption}[2][]
    {\begin{assumptionx}[#1]}
    {\end{assumptionx}}
\DeclareMathOperator{\Sgn}{Sgn}
\DeclareMathOperator{\KRTV}{KRTV}
\newcommand{\nullspace}[1]{\mathcal{N}(#1)}
\newcommand{\setto}{\rightrightarrows}
\def\DIMurange{m}
\def\DIMdomain{n}
\def\DIMkurange{d}
\def\pixels{\ell}
\def\alphavec{\vec\alpha}
\def\ee{\text{\textsc{e}}}
\def\costltwo{{\ensuremath{L_2^2}}}
\def\costlone{{\ensuremath{L_\eta^1}}}
\def\costhubertv{{\ensuremath{L_\eta^1\!\nabla}}}
\def\costsymbreg{{\ensuremath{B_sL_1\!\nabla_\eta}}}
\def\NA{N}%{M_A}
\def\Smoother{H}
\newcommand{\JorigSYM}{J}%{^\infty}}
\newcommand{\JhuberSYM}{J^{\gamma,0}}
\newcommand{\JepsilonSYM}{J^{\gamma,\epsilon}}
\newcommand{\JepsilonXXSYM}[1]{J^{#1}}
\newcommand{\Jorig}[2][\lambda, \alpha]{\JorigSYM(#2; #1)}
\newcommand{\Jhuber}[2][\lambda, \alpha]{\JhuberSYM(#2; #1)}
\newcommand{\Jepsilon}[2][\lambda, \alpha]{\JepsilonSYM(#2; #1)}
\newcommand{\JepsilonXX}[3][\lambda, \alpha]{\JepsilonXXSYM{#3}(#2; #1)}
\def\SolM{\mathcal{S}}
\def\ValM#1{\mathcal{I}_{#1}}
\def\ValMap#1{\ValM{\gamma,#1}}
\def\costf{F}
\newcommand{\dprod}[2]{(#1 | #2)}
\newcommand{\huber}[2][\gamma]{\abs{#2}_{#1}}
\newcommand{\huberradon}[2][\gamma,\Meas]{\norm{#2}_{#1}}
\newcommand{\radon}[2][\Meas]{\norm{#2}_{#1}}
\newcommand{\SPACEj}[1][j]{\Meas(\Omega; \R^{m_{#1}})}
\newcommand{\radonj}[2][j]{\radon[{#1}]{#2}}
\newcommand{\huberjX}[3][j]{\huberradon[{#3,#1}]{#2}}
\newcommand{\huberj}[2][j]{\huberjX[#1]{#2}{\gamma}}
\def\borel{\mathcal{B}}
\def\doublebar#1{\bar{\bar{#1}}}
\def\SPACEalpha{\mathcal{P}_\alpha}
\def\SPACEalphaPos{\SPACEalpha}
\def\SPACEalphaCompactPos{\SPACEalpha^\infty}
\def\SPACEalphaCompactPosInt{\interior \SPACEalpha^\infty}
\def\SPACEalphaPosInt{\interior \SPACEalphaPos}
\def\SPACEalphaC{C_0(\Omega; \R^\NA)}
\def\SPACEalphaRC{[0, \infty]^\NA}
\def\SPACEalphaRCInt{(0, \infty]^\NA}
\def\costK{}
\def\invK{K^\dagger}
\def\barK{(K^\dagger)^*}
\def\SPACEuHilbert{H^1(\Omega; \R^\DIMurange)}
\def\SPACEuHilbertCons{H^1(\Omega; \R^\DIMurange)}
\def\SPACEgraduLtwo{L^2(\Omega; \R^{\DIMurange \times \DIMdomain})}
\def\SPACEgradkuLtwo{L^2(\Omega; \R^{\DIMkurange \times \DIMdomain})}
\newcommand{\SPACEkuLp}[2][\DIMkurange]{L^{#2}(\Omega; \R^{#1})}
\newcommand{\SPACEkuLone}[1][\DIMkurange]{\SPACEkuLp[#1]{1}}
\newcommand{\SPACEkuLtwo}[1][\DIMkurange]{\SPACEkuLp[#1]{2}}
\newcommand{\ICTV}{\mathvar{ICTV}}
\newcommand{\POLYTV}{\mathvar{POLYTV}}
\newcommand{\regf}[2][\alpha]{\regfX[#1]{#2}{\gamma}}
\newcommand{\regfX}[3][\alpha]{R^{#3}(#2; #1)}
\newcommand{\regforigSYM}{R}
\newcommand{\regforig}[2][\alpha]{\regforigSYM(#2; #1)}
\newcommand{\regfMARGhuberX}[3][\bar\alpha]{T^{#3}(#2; #1)}
\newcommand{\regfMARGhuber}[2][\bar\alpha]{\regfMARGhuberX[#1]{#2}{\gamma}}
\newcommand{\regfMARG}[2][\bar\alpha]{T(#2; #1)}
\newlength{\colw}
\def\iftgv#1#2#3{{\def\iftgvpar{#1}\def\iftgvtgv{tgv2}\ifx\iftgvpar\iftgvtgv#2\else#3\fi}}
\def\ifkrtv#1#2#3{{\def\ifkrtvpar{#1}\def\ifkrtvkrtv{krtv2}\ifx\ifkrtvpar\ifkrtvkrtv#2\else#3\fi}}
\def\ifictv#1#2#3{{\def\ifictvpar{#1}\def\ifictvictv{ictv}\ifx\ifictvpar\ifictvictv#2\else#3\fi}}
\def\ifpolytv#1#2#3{{\def\ifpolytvpar{#1}\def\ifpolytvpolytv{polytv}\ifx\ifpolytvpar\ifpolytvpolytv#2\else#3\fi}}
\def\iftwoparam#1#2#3{\iftgv{#1}{#2}{\ifictv{#1}{#2}{#3}}}
\def\ifhuber#1#2#3{{\def\ifhpar{#1}\def\ifhh{huberonly}\ifx\ifhpar\ifhh#2\else#3\fi}}
\def\ifsymbreg#1#2#3{{\def\ifhpar{#1}\def\ifhh{symbregman}\ifx\ifhpar\ifhh#2\else#3\fi}}
\def\ifssn#1#2#3{{\def\ifhpar{#1}\def\ifhh{ssn}\ifx\ifhpar\ifhh#2\else#3\fi}}
\def\mathname#1{\iftgv{#1}{$\TGV^2$}{\ifkrtv{#1}{$\KRTV$}{\ifictv{#1}{$\ICTV$}{\ifpolytv{#1}{$\POLYTV_1$}{$\TV$}}}}}
\def\costname#1{\ifhuber{#1}{\costhubertv}{\ifsymbreg{#1}{\costsymbreg}{\costltwo}}}
\def\XXint#1#2#3{{\setbox0=\hbox{$#1{#2#3}{\int}$ }
\vcenter{\hbox{$#2#3$ }}\kern-.6\wd0}}
\newcommand{\resplotxx}[2][]{
    \setlength{\colw}{\textwidth}
    \begin{tikzpicture}
        \pgftext[at=\pgfpoint{0}{0},left,bottom]{%
            \includegraphics[width=\colw]{{#2}.png}
        }
        #1
    \end{tikzpicture}%
}
\newlength{\imw}
\newcommand{\inplot}[3][]{
    \begin{subfigure}[t]{\imw}%
    \resplotxx[#1]{img/#2}
    \caption{#3}
    \ifdefined\subfigprefix\label{\subfigprefix:#2}\else\relax\fi
    \end{subfigure}
    }
\newlength{\scf}
\newcommand{\restabll}[6][]{%
    \input{resimg/#2-vals.tex}%
    \mathname{#3} &  % Denoising
    \costname{#4} & % Cost
    #5 & % Initial $\alphavec$
    \ifpolytv{#3}{%
        $(\RESalphaSCone,\RESalphaSCtwo,\RESalphaSCthree,\RESalphaSCfour)/\pixels$%
    }{%
        \iftwoparam{#3}%
              {$(\RESalphaSCtwo/\pixels^2, \RESalphaSCone/\pixels)$}%
              {$\RESalphaSCone/\pixels$}% Result $\alphavec$
    } &
   \RESdist & % Value
   \RESssim & % SSIM
   \RESpsnr &
   \RESiters &
   \def\arg{#6}
   \def\nofig{}
   \ifx\arg\nofig\else\ref{#6}(\subref{#6:#3-#4})\fi \\%
}
\begin{document}

\title{The structure of optimal parameters for image restoration problems}
\author{Juan Carlos De los Reyes, Carola-Bibiane Sch\"onlieb and Tuomo Valkonen}

\maketitle

\begin{abstract}
We study the qualitative properties of optimal regularisation parameters in variational models for image restoration. The parameters are solutions of bilevel optimisation problems with the image restoration problem as constraint. A general type of regulariser is considered, which encompasses total variation (TV), total generalized variation (TGV) and infimal-convolution total variation (ICTV). We prove that under certain conditions on the given data optimal parameters derived by bilevel optimisation problems exist. A crucial point in the existence proof turns out to be the boundedness of the optimal parameters away from $0$ which we prove in this paper. The analysis is done on the original -- in image restoration typically non-smooth variational problem -- as well as on a smoothed approximation set in Hilbert space which is the one considered in numerical computations. For the smoothed bilevel problem we also prove that it $\Gamma$ converges to the original problem as the smoothing vanishes. All analysis is done in function spaces rather than on the discretised learning problem.
\end{abstract}

\section{Introduction}

In this paper we consider the general variational image reconstruction problem that, given parameters $\alpha=(\alpha_1,\ldots,\alpha_N)$, $N\geq 1$, aims to compute an image
$$
    u_{\alpha} \in \argmin_{u \in X} \Jorig[\alpha]{u}.
$$
The image depends on $\alpha$ and belongs in our setting to a generic function space $X$. Here $J$ is a generic energy modelling our prior knowledge on the image $u_\alpha$. The quality of the solution $u_\alpha$ of variational imaging approaches like this one crucially relies on a good choice of the parameters $\alpha$. We are particularly interested in the case
$$
\Jorig[\alpha]{u} = \Phi(Ku)
        +
        \sum_{j=1}^{\NA} \alpha_j \radonj{A_j u},
$$
with $K$ a generic bounded forward operator, $\Phi$ a fidelity function, and $A_j$ linear operators acting on $u$. The values $A_j u$ are penalised in the total variation or Radon norm $\radonj{\mu} = \radon[\SPACEj]{\mu}$, % \quad (\mu \in \SPACEj)$ 
and combined constitute the image regulariser. In this context, $\alpha$ represents the regularisation parameter that balances the strength of regularisation against the fitness $\Phi$ of the solution to the idealised forward model $K$. The size of this parameter depends on the level of random noise and the properties of the forward operator. Choosing it too large results in over-regularisation of the solution and in turn may cause the loss of potentially important details in the image; choosing it too small under-regularises the solution and may result in a noisy and unstable output. In this work we will discuss and thoroughly analyse a bilevel optimisation approach that is able to determine the optimal choice of $\alpha$ in $\Jorig[\alpha]{}$. 

Recently, bilevel approaches for variational models have gained increasing attention in image processing and inverse problems in general. Based on prior knowledge of the problem in terms of a training set of image data and corresponding model solutions or knowledge of other model determinants such as the noise level, optimal reconstruction models are conceived by minimising a cost functional -- called $F$ in the sequel -- constrained to the variational model in question. We will explain this approach in more detail in the next section. Before, let us give an account of the state of the art of bilevel optimisation for model learning. In machine learning bilevel optimisation is well established. It is a semi-supervised learning method that optimally adapts itself to a given dataset of measurements and desirable solutions. In \cite{tappen2007utilizing,tappen2007learning,domke2012generic,domke2013learning,Chen2013,chen2014}, for instance the authors consider bilevel optimization for finite dimensional Markov random field (MRF) models. In inverse problems the optimal inversion and experimental acquisition setup is discussed in the context of optimal model design in works by Haber, Horesh and Tenorio \cite{haber2003learning,haber2008numerical,haber2010numerical}, as well as Ghattas et al. \cite{bui2008model,biegler2011large}.  Recently parameter learning in the context of functional variational regularisation models also entered the image processing community with works by the authors \cite{de2013image,calatronidynamic}, Kunisch, Pock and co-workers \cite{kunisch2013bilevel,Chen2012} and Chung et al. \cite{chung2014optimal}. A very interesting contribution can be found in a preprint by Fehrenbach et al. \cite{fehrenbachbilevel} where the authors determine an optimal regularisation procedure introducing particular knowledge of the noise distribution into the learning approach.

Apart from the work of the authors \cite{de2013image,calatronidynamic}, all approaches for bilevel learning in image processing so far are formulated and optimised in the discrete setting. Our subsequent modelling, analysis and optimisation will be carried out in function space rather than on a discretisation of the variational model. In this context, a careful analysis of the bilevel problem is of great relevance for its application in image processing. In particular, the structure of optimal regularisers is important, among others, for the development of solution algorithms. In particular, if the parameters are bounded and lie in the interior of a closed connected set, then efficient optimization methods can be used for solving the problem. Previous results on optimal parameters for inverse problems with partial differential equations have been obtained in, e.g., \cite{Chavent}.

In this paper we study the qualitative structure of regularization parameters arising as solution of bilevel optimisation problem of variational models. In our framework the variational models are typically convex but non-smooth and posed in Banach spaces. The total variation and total generalized variation regularisation models are particular instances. Alongside the optimisation of the non-smooth variational model, we also consider a smoothed approximation in Hilbert space which is typically the one considered in numerical computation. Under suitable conditions, we prove that -- for both the original non-smooth optimisation problem as well as the regularised Hilbert space problem -- the optimal regularisers are bounded and lie in the interior of the positive orthant. The conditions necessary to prove this turn out to be very natural conditions on the given data in the case of an $L^2$ cost functional $F$. Indeed, for the total variation regularisers with $L^2$-squared cost and fidelity, we will merely require
\[
    \TV(f) > \TV(f_0),
\]
with $f_0$ the ground-truth and $f$ the noisy image. That is, the noisy image should oscillate more in terms of the total variation functional, than the ground-truth. For second-order total generalised variation \cite{bredies2011tgv}, we obtain an analogous condition.
Apart from the standard $L^2$ costs, we also discuss costs that constitute a smoothed $L^1$ norm of the gradient of the original data -- we will call this the Huberised total variation cost in the sequel -- typically resulting in optimal solutions superior to the ones minimising an $L^2$ cost. For this case, however, the interior property of optimal parameters could be verified for a finite dimensional version of the cost only. Eventually, we also show that as the numerical smoothing vanishes the optimal parameters for the smoothed models tend to optimal parameters of the original model.

The results derived in this paper are motivated by problems in image processing. However, their applicability goes well beyond that and can be generally applied to parameter estimation problems of variational inequalities of the second kind, for instance the parameter estimation problem in Bingham flow \cite{de2010optimization}. Previous analysis in this context either required box constraints on the parameters in order to prove existence of solutions or the addition of a parameter penalty to the cost functional \cite{Barbu1993,Bergounioux1998,BergouniouxMignot2000,delosreyes2014learning}. In this paper, we require neither but rather prove that under certain conditions on the variational model and for reasonable data and cost functional, optimal parameters are indeed positive and guaranteed to be bounded away from $0$ and $\infty$. As we will see later this is enough for proving existence of solutions and continuity of the solution map. The next step from our work in this here is deriving numerically useful characterisations of solutions to the ensuing bi-level programs. For the most basic problems considered herein this has been done in \cite{delosreyes2014learning} under numerical $H^1$ regularisation. We will consider in the follow-up work \cite{tuomov-tgvlearn} the optimality conditions for higher-order regularisers and the new cost functionals introduced in this work. For an extension characterisation of optimality systems for bi-level optimisation in finite dimensions, we point the reader to \cite{dempe2014kkt} as a starting point.

\paragraph{Outline of the paper} In Section \ref{sec:probsetup} we introduce the general bilevel learning problem, stating assumptions on the setup of the cost functional $F$ and the lower level problem given by a variational regularisation approach. The bilevel problem is discussed in its original non-smooth form \eqref{eq:learn} as well as in a smoothed form in a Hilbert space setting \eqref{eq:learn-numerical-single} in Section \ref{sec:numerical-considerations}, that will be the one used in the numerical computations. The bilevel problem is put in context with parameter learning for non-smooth variational regularisation models, typical in image processing, by proving the validity of the assumptions for examples such as $\TV$, $\TGV$ and ICTV regularisation. The main results of the paper -- existence of positive optimal parameters for $L^2$, Huberised TV and $L^1$ type costs and the convergence of the smoothed numerical problem to the original non-smooth problem -- are stated in Section \ref{sec:main}. Auxiliary results, such as coercivity, lower semicontinuity and compactness results for the involved functionals, is the topic of Section \ref{sec:auxres}. Proofs for existence and convergence of optimal parameters are contained in Section \ref{sec:proof}. The paper finishes with a brief numerical discussion in Section \ref{sec:numerics}. 

\section{The general problem setup} \label{sec:probsetup}

Let $\Omega \subset \R^m$ be an open bounded domain with Lipschitz boundary. This will be our image domain. Usually $\Omega=(0, w) \times (0, h)$ for $w$ and $h$ the width and height of a two-dimensional image, although no such assumptions are made in this work. Our noisy or corrupted data $f$ is assumed to lie in a Banach space $Y$, which is the dual of $Y_*$, while our ground-truth $f_0$ lies in a general Banach space $Z$. Usually, in our model, we choose $Z=Y=L^2(\Omega; \R^\DIMkurange)$. This holds for denoising or deblurring, for example, where the data is just a corrupted version of the original image. Further $d=1$ for grayscale images, and $d=3$ for colour images in typical colour spaces. For sub-sampled reconstruction from Fourier samples, we might use a finite-dimensional space $Y=\C^n$ -- there are however some subtleties with that, and we refer the interested reader to \cite{adcock2014generalized}.

As our parameter space for regularisation functional weights we take
\[
    \SPACEalphaCompactPos \defeq \SPACEalphaRC,
\]
where $N$ is the dimension of the parameter space, that is $\alpha=(\alpha_1,\ldots,\alpha_N)$, $N\geq 1$. Observe that we allow infinite and zero values for $\alpha$. The reason for the former is that in case of $\TGV^2$, it is not reasonable to expect that \emph{both} $\alpha_1$ and $\alpha_2$ are bounded; such conditions would imply that $\TGV^2$ performs better than both $\TV^2$ and $\TV$.  But we want our learning algorithm to find out whether that is the case! Regarding zero values, one of our main tasks is proving that for reasonable data, optimal parameters in fact lie in the interior
\[
    \SPACEalphaCompactPosInt = \SPACEalphaRCInt.
\]
This is required for the existence of solutions and the continuity of the solution map parametrised by additional regularisation parameters needed for the numerical realisation of the model. We also set
\[
    \SPACEalphaPos \defeq [0, \infty)^\NA
\]
for some occasions when we need a bounded parameter.

\begin{remark}
     In much of our treatment, we could allow for spatially dependent parameters $\alpha$.
     However, the parameters would need to lie in a finite-dimensional subspace of $\SPACEalphaC$ in our theory. Minding our general definition of the functional $\Jhuber[\alpha]{\freevar}$ below, no generality is lost by taking $\alpha$ to be vectors in $\R^\NA$. We could simply replace the sum in the functional as a larger sum modelling integration over parameters with values in a finite-dimensional subspace of $\SPACEalphaC$. 
\end{remark}

In our general learning problem, we look for $\alpha=(\alpha_1, \ldots, \alpha_\NA)$ solving for some convex, proper, weak* lower semicontinuous cost functional $F: X \to \R$ the problem
\begin{equation}
    \label{eq:learn}
    \tag{$\mathrm{P}$}
    \min_{\alpha \in \SPACEalphaCompactPos} \costf(\costK u_{\alpha})
\end{equation}
subject to
\begin{equation}
    \label{eq:denoise}
    \tag{$\mathrm{D}_\alpha$}
    u_{\alpha} \in \argmin_{u \in X} \Jorig[\alpha]{u},
\end{equation}
with
\[
    \Jorig[\alpha]{u} \defeq
    %    \sum_{i=1}^{\Nphi} \int_\Omega \lambda_i(x) \phi_i(x, [Ku](x)) \d x
    %    +
    %    \sum_{j=1}^{\NA} \int_\Omega \alpha_j(x) \d \huber[\gamma]{A_j u}(x).
        \Phi(Ku)
        +
        \sum_{j=1}^{\NA} \alpha_j \radonj{A_j u}.
\]
%The rationale for the notation $J^\infty$ will become apparent when we introduce Huber-regularisation later on. 
Here we denote for short the total variation norm
\[
    \radonj{\mu} \defeq \radon[\SPACEj]{\mu}, \quad (\mu \in \SPACEj).
\]
The following covers our assumptions with regard to $A$, $K$, and $\Phi$.
We discuss various specific examples in Section \ref{sec:example} immediately after stating the assumptions.

\begin{namedassumption}[Operators $A$ and $K$]{$\mbox{KA}$}
    \label{ass:a-k}
    We assume that $Y$ is Banach spaces, and $X$ a normed linear space, both themselves duals of $Y_*$ and $X_*$, respectively. 
    We then assume that the linear operators
    \[
        A_j: X \to \SPACEj,
        \quad
        (j=1,\ldots,\NA),
    \]
    and
    \[
        K: X \to Y,
    \]
    are bounded. Regarding $K$, we also assume the existence of a bounded a right-inverse $\invK: \range{K} \to X$, where $\range{K}$ denotes the range of the operator $K$. That is, $K \invK=I$ on $\range{K}$. 
    We further assume that
    \begin{equation}
        \label{eq:X-coercive}
        \norm{u}_{X}'
        \defeq
        \sum_{j=1}^{\NA} \radonj{A_j u} + \norm{Ku}_Y
    \end{equation}
    is a norm on $X$, equivalent to the standard norm. 
    In particular, by the Banach--Alaoglu theorem, any sequence $\{u^i\}_{i=1}^\infty \subset X$ with $\sup_i \norm{u^i}_{X}' < \infty$ possesses a weakly*
    convergent subsequence.
\end{namedassumption}

\begin{namedassumption}[The fidelity $\Phi$]{$\Phi$}
    \label{ass:phi}
    We suppose $\Phi: Y \to (-\infty, \infty]$ is convex, proper, weakly* lower semicontinuous, and coercive in the sense that 
    \begin{equation}
        \label{eq:Y-coercive}
        \Phi(v) \to +\infty \quad\text{as}\quad \norm{v}_Y \to +\infty .   \end{equation}
%    for some $C_\Phi$ holds
%    \begin{equation}
%        \label{eq:Y-coercive}
%        \Phi(v) \ge C_\Phi \norm{v}_Y -1,
%        \quad
%        (v \in Y).
%    \end{equation}
    We assume that $0 \in \Dom \Phi$, and the existence of $f \in \argmin_{v \in Y} \Phi(v)$ such that $f=K\bar f$ for some $\bar f \in X$.
    Finally, we require that either $K$ is compact, or $\Phi$ is continuous and strongly convex.
\end{namedassumption}

\begin{remark}
    \label{remark:barf}
    When $\bar f$ exists, we can choose $\bar f=\invK f$.
\end{remark}

\begin{remark}
    Instead of $0 \in \Dom \Phi$, it would suffice to assume, more generally, that $\Dom \Phi \isect \Isect_{j=1}^\NA \ker A_j \ne \emptyset$.
\end{remark}

% \begin{remark}
%     Observe from \eqref{eq:X-coercive} and \eqref{eq:Y-coercive} that $\Phi \circ K+ \sum_{j=1}^{\NA} \radonj{A_j \freevar}$ has to provide bounds in $X$, and $\Phi$ alone in $Y$. \TODO{Tuomo: I do not understand this remark.}
% \end{remark}

We also require the following technical assumption on the relationship of the regularisation terms and the fidelity $\Phi$. Roughly speaking, in most interesting cases, it says that for each $\ell$, we can closely approximate the noisy data $f$ with functions of order $\ell$. But this is in a lifted sense, not directly in terms of derivatives.

\begin{namedassumption}[Order reduction]{$\delta$}%{$\bar f_{\delta,\ell}$}
    \label{ass:aj-zeroterm-approx}
    We assume that for every $\ell \in \{1,\ldots, \NA\}$ and $\delta>0$,
    there exists $\bar f_{\delta,\ell} \in X$ such that
    \begin{subequations}
    \label{eq:f-approx}
    \begin{align}
        \label{eq:f-approx-delta}
        \Phi(K\bar f_{\delta,\ell})
        %+ 
        %\sum_{j=1}^{\ell-1} \huber[\gamma]{A_j \bar f_{\delta,\ell}}(\Omega)
        &
        < \delta + \inf \Phi,
        \\
        \radonj[\ell]{A_\ell \bar f_{\delta,\ell}}
        &
        < \infty,
        \quad\text{and},
        \\
        \label{eq:f-approx-zero}
        \sum_{j \ne \ell} \radonj{A_j \bar f_{\delta,\ell}}
        &
        = 0.
    \end{align}
    \end{subequations}
\end{namedassumption}

%%%
\subsection{Specific examples}
\label{sec:example}
%%%

We now discuss a few examples to motivate the abstract framework above.

\begin{example}[Squared $L^2$ fidelity]
    \label{ex:sql2}
    With $Y=L^2(\Omega)$, $f \in Y$, and
    \[
        \Phi(v)=\frac{1}{2}\norm{v-f}_Y^2,
    \]
    we recover the standard $L^2$-squared fidelity, modelling Gaussian noise. On a bounded domain $\Omega$, Assumption \ref{ass:phi} follows immediately.
\end{example}

\begin{example}[Total variation denoising]
    \label{example:tv-1}
    Let us take $K_0$ as the embedding of $X=\BVspace(\Omega) \isect L^2(\Omega)$ into $Z=L^2(\Omega)$, and $A_1=D$. We equip $X$ with the norm
    \[
        \norm{u}_X \defeq \norm{u}_{L^2(\Omega)} + \norm{Du}_{\Meas(\Omega; \R^\DIMdomain)}.
    \]
    This makes $K_0$ a bounded linear operator.
    If the domain $\Omega$ has Lipschitz boundary, and the dimension satisfies $\DIMdomain \in \{1,2\}$, the space $\BVspace(\Omega)$ continuously embeds into $L^2(\Omega)$ \cite[Corollary 3.49]{ambrosio2000fbv}. Therefore, we may identify $X$ with $\BVspace(\Omega)$ as a normed space. Otherwise, if $\DIMdomain \ge 3$, without going into the details of constructing $X$ as a dual space,\footnote{This can be achieved by allowing $\phi_0 \in L^2(\Omega)$ instead of the $C_0(\Omega)$ in the construction of \cite[Remark 3.12]{ambrosio2000fbv}.} we \emph{define} weak* convergence in $X$ as combined weak* convergence in $\BVspace(\Omega)$ and $L^2(\Omega)$.  Any bounded sequence in $X$ will then have a weak* convergent subsequence. This is the only property we would use from $X$ being a dual space.

    Now, combined with Example \ref{ex:sql2} and the choice $K=K_0$, $Y=Z$, we get total variation denoising for the sub-problem \eqref{eq:denoise}. Assumption \ref{ass:a-k} holding is immediate from the previous discussion.
    Assumption \ref{ass:aj-zeroterm-approx} is also easily satisfied, as with $f \in \BVspace(\Omega) \isect L^2(\Omega)$, we may simply pick $\bar f_{\delta,1}=f$ for the only possible case $\ell=1$.
    Observe however that $K$ is not compact unless $\DIMdomain=1$, see \cite[Corollary 3.49]{ambrosio2000fbv}, so the strong convexity of $\Phi$ is crucial here. If the data $f \in L^\infty(\Omega)$, then it is well-known that solutions $\hat u$ to \eqref{eq:denoise} satisfy $\norm{\hat u}_{L^\infty(\Omega)} \le \norm{f}_{L^\infty(\Omega)}$. We could therefore construct a compact embedding by adding some artificial constraints to the data $f$. This changes in the next two examples, as boundedness of solutions for higher-order regularisers is unknown; see also \cite{tuomov-jumpset2}.
\end{example}

\begin{example}[Second order total generalised variation denoising]
    \label{example:tgv2-1}
    We take
    \[
        X=(\BVspace(\Omega) \isect L^2(\Omega)) \times \BDspace(\Omega),
    \]
    the first part with the same topology as in Example \ref{example:tv-1}. We also take $Z=L^2(\Omega)$, denote $u=(v, w)$, and set
    \[
        K_0(v, w)=v, \quad A_1u = Dv-w,\quad \text{ and } \quad A_2u =Ew
    \]
    for $E$ the symmetrised differential. With $K=K_0$ and $Y=Z$, this yields second-order total generalised variation ($\TGV^2$) denoising \cite{bredies2011tgv} for the sub-problem \eqref{eq:denoise}. 
    Assuming for simplicity that $\alpha_1, \alpha_2 > 0$ are constants,
    to show Assumption \ref{ass:a-k}, we recall from \cite{sampta2011tgv} 
    the existence of a constant $c=c(\Omega)$ such that
    \[
        \inv c\norm{v}_{\BVspace(\Omega)}
        \le
        \norm{v}_{L^1(\Omega)}
        +
        \norm{Dv}_{\Meas(\Omega; \R^\DIMdomain)}
        \le
        c\norm{v}_{\BVspace(\Omega)},
    \]
    where the norm
    \[
        \norm{v}_{\BVspace(\Omega)} 
        \defeq
        \TGV^2_{(1,1)}(v) + \norm{v}_{L^1(\Omega)}.
    \]
    We may thus approximate
    \[
        \begin{split}
        \norm{w}_{L^1(\Omega; \R^\DIMdomain)}
        &
        \le
        \norm{Dv-w}_{\Meas(\Omega; \R^\DIMdomain)} + \norm{Dv}_{\Meas(\Omega; \R^\DIMdomain)}
        \\
        &
        \le
        \norm{Dv-w}_{\Meas(\Omega; \R^\DIMdomain)} + c\bigl(\TGV^2_{(1,1)}(v) + \norm{u}_{L^1(\Omega)}\bigr),
        \\
        &
        \le
        (1+c)\bigl(\norm{Dv-w}_{\Meas(\Omega; \R^\DIMdomain)} + \norm{Ew}_{\Meas(\Omega; \R^{\DIMdomain \times \DIMdomain})}\bigr)
        +
        c \norm{v}_{L^1(\Omega)}.
        %\\
        %&
        %=
        %(1+c)\sum_{i=1}^\NA \radonj{A_j u}
        %+
        %c \norm{v}_{L^1(\Omega)}.
        \end{split}
    \]
    For some $C>0$, it follows
    \[
        \begin{split}
        \norm{u}_{X}
        & =
        \left(
        \norm{v}_{L^2(\Omega)}
        +
        \norm{Dv}_{\Meas(\Omega; \R^\DIMdomain)}
        \right)
        +
        \left(
        \norm{w}_{L^1(\Omega; \R^\DIMdomain)}
        +
        \norm{Ew}_{\Meas(\Omega; \R^{\DIMdomain \times \DIMdomain})}
        \right)
        \\
        &
        \le
        C\left(\norm{Dv-w}_{\Meas(\Omega; \R^\DIMdomain)} + \norm{Ew}_{\Meas(\Omega; \R^{\DIMdomain \times \DIMdomain})}
        +
        \norm{v}_{L^2(\Omega)}\right)
        \\
        &
        =
        C\left(\sum_{i=1}^\NA \radonj{A_j u}
        +
        \norm{Ku}_{L^2(\Omega)}\right).
        \end{split}
    \]
    This shows $\norm{u}_{X} \le C\norm{u}_{X}'$. 
    The inequality $\norm{u}_{X} \ge \norm{u}_{X}'$
    follows easily from the triangle inequality, namely
    \[
        \norm{u}_{X}
        \ge
        \norm{v}_{L^2(\Omega)}
        +
        \norm{Dv-w}_{\Meas(\Omega; \R^\DIMdomain)}
        +
        \norm{Ew}_{\Meas(\Omega; \R^{\DIMdomain \times \DIMdomain})}.
    \]
    Thus $\norm{\freevar}_{X}'$ is equivalent to $\norm{\freevar}_{X}$.

    Next we observe that clearly $Dv^k-w^k \weaktostar Dv-w$ in $\Meas(\Omega; \R^\DIMdomain)$ 
    and $Ew^k \weaktostar Ev$ in $\Meas(\Omega; \R^{\DIMdomain \times \DIMdomain})$ 
    if $v^k \weaktostar v$ in $\BVspace(\Omega)$ and $w^k \weaktostar w$ weakly* in $\BDspace(\Omega)$.
    Thus $A_1$ and $A_2$ are weak* continuous. Assumption \ref{ass:a-k} follows.

    Consider then the satisfaction of Assumption \ref{ass:aj-zeroterm-approx}. If $\ell=1$, we may then pick $\bar f_{\delta,1}=(f, 0)$, in which case $A_1 f_{\delta,1}=Df$, and $A_2 f_{\delta,1} = 0$. If $\ell=2$, which is the only other case, we may pick a smooth approximate $f_{\delta,2}$ to $f$ such that
    \[
        \frac{1}{2} \norm{f-f_{\delta,2}}_{L^2(\Omega)}^2 < \delta.
    \]
    Then we set $\bar f_{\delta,2} \defeq (f_{\delta,2}, \grad f_{\delta,2})$, yielding $A_1 \bar f_{\delta,2}=0$ and $A_2 \bar f_{\delta,2} = \grad^2 f_{\delta,2}$. By $f_{\delta,2}$ being smooth we see that $\radon{A_2 \bar f_{\delta,2}} < \infty$. Thus Assumption \ref{ass:aj-zeroterm-approx} is satisfied for the squared $L^2$ fidelity $\Phi(v)=\norm{f-v}_{L^2(\Omega)}^2$.
\end{example}

\begin{example}[Infimal convolution TV denoising]
    \label{example:ictv-1}
    Let us take $Z=L^2(\Omega)$, and $X=(\BVspace(\Omega) \isect L^2(\Omega)) \times X_2$, where
    \[
        X_2 \defeq \{ v \in W^{1,1}(\Omega) \mid \grad v \in \BVspace(\Omega; \R^\DIMdomain)\},
    \]
    and $\BVspace(\Omega) \isect L^2(\Omega)$ again has the topology of Example \ref{example:tv-1}.
    Setting $u=(v, w)$, as well as
    \[
        K_0(v, w)=v+w, \quad A_1u = Dv,\quad \text{ and } \quad A_2u =D\grad w,
    \]
    we obtain for \eqref{eq:denoise} with $K=K_0$ and $Y=Z$ the infimal convolution total variation denoising model of \cite{chambolle97image}. Assumption \ref{ass:a-k}, \ref{ass:aj-zeroterm-approx}, and \ref{ass:sol-smoothness}
    are verified analogously to $\TGV^2$ in Example \ref{example:tgv2-1}.
\end{example}

\begin{example}[Cost functionals]
    \label{ex:cost}
    For the cost functional $\costf$, given noise-free data $f_0 \in Z = L^2(\Omega)$, we consider in particular the $L^2$ cost
    \[
        \costf_{\costltwo}(u) \defeq \frac{1}{2}\norm{f_0 - K_0 u}_{L^2(\Omega)}^2, 
    \]
    as well as the Huberised total variation cost
    \[
        \costf_{\costhubertv}(u) \defeq \huberradon[\gamma]{D(f_0- K_0 u)}
    \]
    with noise-free data $f_0 \in Y \defeq \BVspace(\Omega)$. For the definition of the Huberised total variation, we refer to the Section \ref{sec:numerical-considerations} on the numerics of the bi-level framework \eqref{eq:learn}.
\end{example}

\begin{example}[Sub-sampled Fourier transforms for MRI]
    Let $K_0$ and the $A_j$s be given by one of the regularisers of Example \ref{example:tv-1} to \ref{example:ictv-1}. Also take the cost $F = \costf_{\costltwo}$ or $\costf_{\costhubertv}$ as in Example \ref{ex:cost}, and $\Phi$ as the squared $L^2$ fidelity of Example \ref{ex:sql2}.
    However, let us now take $K=TK_0$ for some bounded linear operator $T: Z \to Y$.
    The operator $T$ could be, for example, a blurring kernel or a (sub-sampled) Fourier transform, in which case we obtain a model for learning the parameters for deblurring or recontruction from Fourier samples. The latter would be important, for example for magnetic resonance imaging (MRI) \cite{benning2014phase,tuomov-nlpdhgm,tuomov-dtireg}.
    Unfortunately, our theory does no extend to many of these cases because we will require, roughly, $K_0^*K_0 \le C K^*K$ for some constant $C>0$.
\end{example}

\begin{example}[Parameter estimation in Bingham flow]
\label{ex:bingham}
Bingham fluids are materials that behave as solids if the magnitude of the stress tensor stays below a plasticity threshold, and as liquids if that quantity surpasses the threshold. In a cross sectional pipe, of section $\Omega$, the behaviour is modeled by the energy minimization functional
\begin{equation} \label{eq: bingham functional}
\min_{u \in H_0^1(\Omega)} ~\frac{\mu}{2} \|u\|^2_{H_0^1(\Omega)}- (f | u )_{H^{-1}(\Omega),H_0^1(\Omega)} + \alpha \int_{\Omega} |\nabla u|~dx,
\end{equation}
where $\mu >0$ stands for the viscosity coefficient, $\alpha >0$ for the plasticity threshold and $f \in H^{-1}(\Omega)$. In many practical situations, the plasticity threshold is not known in advance and has to be estimated from experimental measurements. One then aims at minimizing a least squares term 
$$F_{L^2_2}= \frac{1}{2} \|u- f_0\|^2_{L^2(\Omega)}$$
subject to \eqref{eq: bingham functional}.

The bilevel optimization problem can then be formulated as problem \eqref{eq:learn}-\eqref{eq:denoise}, with the choices $X=Y=H_0^1(\Omega)$, $K$ the identity, $A_1=D$ and
$$\phi(v)=\frac{\mu}{2} \|v\|^2_{H_0^1(\Omega)}- (f | u)_{H^{-1}(\Omega),H_0^1(\Omega)}.$$

Concentrating in the rest of this paper primarily on image processing applications, we will however briefly return to Bingham flow in Example \ref{ex:bingham-condition}.
\end{example}

\subsection{Considerations for numerical implementation}
\label{sec:numerical-considerations}
For the numerical solution of the denoising sub-problem, we will in a follow-up work \cite{tuomov-tgvlearn} expand upon the infeasible semi-smooth quasi-Newton approach taken in \cite{hintermuller2006infeasible} for $L^2$-$\TV$ image restoration problems. This depends on Huber-regularisation of the total variation measures, as well as enforcing smoothness through Hilbert spaces. This is usually done by a squared penalty on the gradient, i.e., $H^1$ regularisation, but we formalise this more abstractly in order to simplify our notation and arguments later on.
Therefore, we take a convex, proper, and weak* lower-semicontinous smoothing functional $\Smoother: X \to [0,\infty]$, and generally expect it to satisfy the following.

\begin{namedassumption}[Smoothing]{$\Smoother$}
    \label{ass:sol-smoothness}
    We assume that $0 \in \Dom \Smoother$ and for every $\delta \ge 0$, every
    $\alpha \in \SPACEalphaCompactPos$, and every $u \in X$,
    the existence of $u^\delta \in X$ satisfying
    \begin{equation}
        \label{eq:sol-smoothness}
        \Smoother(u^\delta)<\infty
        \quad\text{and}\quad
        \Jorig[\alpha]{u^\delta} \le \Jorig[\alpha]{u} + \delta.
    \end{equation}
\end{namedassumption}

\begin{example}[$H^1$ smoothing in $\BVspace(\Omega)$]
    Usually, with $\SPACEuHilbert \isect X \ne \emptyset$, we take
    \[
        \Smoother(u) \defeq
        \begin{cases}
            \frac{1}{2} \norm{\grad u}_{\SPACEgraduLtwo}^2, & u \in \SPACEuHilbert \isect X, \\
            \infty, & \text{otherwise}.
        \end{cases}
    \]
    This is in particular the case with Example \ref{example:tv-1} ($\TV$), where
    $X=\BVspace(\Omega) \isect L^2(\Omega) \supset H^1(\Omega)$, and Example \ref{example:tgv2-1} ($\TGV^2$),
    where $X=(\BVspace(\Omega) \isect L^2(\Omega)) \times \BDspace(\Omega) \supset H^1(\Omega) \times H^1(\Omega; \R^\DIMdomain)$ on a bounded domain $\Omega$.
    In both of these cases, weak* lower semicontinuity is apparent; for completeness we record this in Lemma \ref{lemma:lsc} in Section \ref{sec:lsc}.
    In case of Example \ref{example:tv-1}, \eqref{eq:sol-smoothness} is immediate from approximating $u$ strictly by functions in $C^\infty(\Omega)$ using standard strict approximation results in $\BVspace(\Omega)$ \cite{ambrosio2000fbv}. In case of Example \ref{example:tgv2-1}, this also follows by a simple generalisation of the same argument to TGV-strict approximation, as presented in \cite{tuomov-jumpset2,bredies2013regularization}.
\end{example}

For parameters $\epsilon \ge 0$ and $\gamma \in (0, \infty]$, we then consider the problem
\begin{equation}
    \label{eq:learn-numerical-single}
    \tag{$\mathrm{P}^{\gamma,\epsilon}$}
    \min_{\alpha \in \SPACEalphaCompactPos} \costf(\costK u_{\alpha,\gamma,\epsilon})
\end{equation}
where $u_{\alpha,\gamma,\epsilon} \in X \isect \Dom \epsilon\Smoother$ solves
\begin{equation}
    \label{eq:denoise-numerical-single}
    \tag{$\mathrm{D}^{\gamma,\epsilon}$}
    \min_{u \in X} \Jepsilon[\alpha]{u}
\end{equation}
for
\begin{equation}
    %\label{eq:j-hilbert}
    %\tag{$J$}
    \notag
    \Jepsilon[\alpha]{u} :=
        \epsilon \Smoother(u)
        +
        \Phi(Ku)
        %\sum_{i=1}^{\Nphi} \int_\Omega \lambda_i(x) \phi_i(x, [Ku](x)) \d x
        %\sum_{i=1}^{\Nphi} \lambda_i \int_\Omega \phi_i(x, [Ku](x)) \d x
        +
        %\sum_{j=1}^{\NA} \int_\Omega \alpha_j(x) \d \huber[\gamma]{A_j u}(x).
        \sum_{j=1}^{\NA} \alpha_j \huberj{A_j u}.
\end{equation}
Here we denote for short the Huber-regularised total variation norm
\[
    \huberj{\mu} \defeq \huberradon[\gamma,\SPACEj]{\mu}, \quad (\mu \in \SPACEj),
\]
as given by the following definition.
There we interpret $\gamma=\infty$ to give back the standard unregularised total variation measure.
Clearly $\JepsilonXXSYM{\infty,0}=J$, and \eqref{eq:denoise} corresponds to \eqref{eq:denoise-numerical-single} and \eqref{eq:learn} to \eqref{eq:learn-numerical-single} with $(\gamma,\epsilon)=(\infty,0)$.

\begin{definition}\label{def:huber}
Given $\gamma \in (0, \infty]$, we  define for the norm $\norm{\freevar}_2$ on $\R^m$, 
the Huber regularisation
\[
    \huber[\gamma]{g} = 
    \begin{cases}
        \norm{g}_2 - \frac{1}{2\gamma}, & \norm{g}_2 \ge 1/\gamma,
        \\
        \frac{\gamma}{2}\norm{g}_2^2, & \norm{g}_2 < 1/\gamma.
    \end{cases}
\]
We observe that this can equivalently be written using convex conjugates as
\begin{equation}
    \label{eq:huber-twonorm}
    \alpha \huber[\gamma]{g}=
    \sup
    \Bigl\{
        \iprod{q}{g} - \frac{1}{2\gamma\alpha} \norm{q}_2^2
        \Bigm|
        \norm{q}_2 \le \alpha
    \Bigr\}.
\end{equation}
Then if $\mu = f \L^\DIMdomain + \mu^s$ is the Lebesgue decomposition of $\mu \in \Meas(\Omega; \R^m)$ into the absolutely continuous part $f\L^\DIMdomain$ and the singular part $\mu^s$, we set
\[
    \huber[\gamma]{\mu}(V) \defeq \int_V \huber[\gamma]{f(x)} \d x + \abs{\mu^s}(V),
    \quad
    (V \in \borel(\Omega)).
\]
The measures $\huber[\gamma]{\mu}$ is the Huber-regularisation of the total variation measures $\abs{\mu}$, and we define its Radon norm as the Huber regularisation of the Radon norm of $\mu$, that is
\[
    \huberradon[\gamma,\Meas(\Omega; \R^m)]{\mu} \defeq \radon[\Meas(\Omega; \R^m)]{\huber[\gamma]{\mu}}.
\]
\end{definition}

\begin{remark}
    The parameter $\gamma$ varies in the literature.
    In this paper, we use the convention of \cite{delosreyes2014learning},
    where large $\gamma$ means small Huber regularisation. 
    In \cite{hintermuller2006infeasible}, small $\gamma$ means small
    Huber regularisation. That is, their regularisation parameter is
    $1/\gamma$ in our notation.
\end{remark}

\subsection{Shorthand notation}

Writing for notational lightness
\[
    A u \defeq (A_1 u, \ldots, A_\NA u),
\]
and
\[
    \regf{\mu_1, \ldots, \mu_\NA} \defeq \sum_{j=1}^\NA \alpha_j \huberj{\mu_j},
    \quad
    \regforig{\freevar} \defeq \regfX{\freevar}{\infty},
\]
our problem \eqref{eq:learn-numerical-single} becomes
\begin{equation}
    \notag
    \min_{\alpha \in \SPACEalphaCompactPos} \costf(\costK u_{\alpha})
    \quad\text{subject to}\quad
    u_{\alpha} \in \argmin_{u \in X} \Jepsilon[\alpha]{u}
\end{equation}
for 
\[
    \Jepsilon[\alpha]{u} \defeq
        %\int_\Omega \phi_1(x, [Ku](x)) \d x
        %+
        %%\sum_{j=1}^{\NA} \int_\Omega \alpha_j(x) \d \huber[\gamma]{A_j u}(x).
        %\sum_{j=1}^{\NA} \alpha_j \huber[\gamma]{A_j u}(\Omega).
        \epsilon \Smoother(u)
        + \Phi(Ku)
        + \regf{A u}.
\]

Further, given $\alpha \in \SPACEalphaPos$, we define the ``marginalised'' 
regularisation functional
\begin{equation}
    \label{eq:regfmarg}
    \regfMARGhuber[\alpha]{v} \defeq \inf_{\bar v \in \inv Kv} \regf[\alpha]{A\bar v},
    \quad
    \regfMARG{\freevar} \defeq \regfMARGhuberX{\freevar}{\infty}.
\end{equation}
Here $\inv K$ stands for the preimage, so the constraint is $\bar v \in X$ with $v=K\bar v$.
Then in the case $(\gamma, \epsilon)=((\infty,0)$ and $\costf=\costf_0 \circ K$, our problem may also be written as
\[
    \notag
    \min_{\alpha \in \SPACEalphaCompactPos}
        \costf_0(v_{\alpha})
\]
subject to
\[
    v_{\alpha} \in \argmin_{v \in Y}
    \Phi(v) + \regfMARG[\alpha]{v}.
\]
This gives the problem a much more conventional flair, as the following examples demonstrate.

\begin{example}[$\TV$ as a marginal]
    \label{example:tv-marginal}
    Consider the total variation regularisation of Example \ref{example:tv-1}. Then $A_1=D$, $\NA=1$, and
    \[
        \regfMARG[\alpha]{v}=\regforig[\alpha]{A v}=\alpha\TV(v).
    \]
\end{example}

\begin{example}[$\TGV^2$ as a marginal]
    \label{example:tgv2-marginal}
    In case of the $\TGV^2$ regularisation of Example \ref{example:tgv2-1}, we have $K(v,w)=v$ and $\invK v=(v, 0)$. Thus $K\invK f=f$, etc., so
    \[
        \regfMARG[\alpha]{v}=\inf_{w \in \BDspace(\Omega)} \regforig[\alpha]{Dv-w,Ew} = \TGV^2_{\alpha}(v).
    \]
\end{example}

\section{Main results}
\label{sec:main}

Our task now is to study the characteristics of optimal solutions, and their existence. Our results, based on natural assumptions on the data and the original problem \eqref{eq:learn}, derive properties of the solutions to this problem and all numerically regularised problems \eqref{eq:learn-numerical-single} sufficiently close to the original problem: large $\gamma>0$ and small $\epsilon>0$.
We denote by $u_{\alpha,\gamma,\epsilon}$ any solution to \eqref{eq:denoise-numerical-single} for any given $\alpha \in \SPACEalphaPos$, and by $\alpha_{\gamma,\epsilon}$ any solution to \eqref{eq:learn-numerical-single}. Solutions to \eqref{eq:denoise} and \eqref{eq:learn} we denote, respectively, by $u_\alpha=u_{\alpha,\infty,0}$, and $\hat \alpha=\alpha_{\infty,0}$. 

\subsection{$L^2$-squared cost and $L^2$-squared fidelity}

Our main existence result regarding $L^2$-squared costs and $L^2$-squared fidelities is the following.

\begin{theorem}
    \label{thm:l2cost-main}
    Let $Y$ and $Z$ be Hilbert spaces, $\costf(u)=\frac{1}{2}\norm{K_0 u - f_0}_Z^2$, and $\Phi(v)=\frac{1}{2}\norm{f-v}_Y^2$ for some $f \in \range{K}$, $f_0 \in Z$, and a bounded linear operator $K_0: X \to Z$ satisfying
    \begin{equation}
        \label{eq:k0-condition}
	\norm{K_0 u}_Z \le C_0 \norm{Ku}_Y, \quad \text{for all } u \in X
        \quad \text{for some constant $C_0>0$}.
    \end{equation}
    Suppose Assumption \ref{ass:a-k} and \ref{ass:aj-zeroterm-approx} hold.
    If for some $\bar\alpha \in \SPACEalphaPosInt$ and $t \in (0, 1/C_0]$ holds
    \begin{equation}
        \label{eq:l2cost-interior-condition-k0}
        \regfMARG{f} > \regfMARG{f-t(K_0\invK)^*(K_0 \invK f-f_0)},
    \end{equation}
    then problem  \eqref{eq:learn} admits a solution $\hat \alpha \in \SPACEalphaCompactPosInt$.

    If, moreover, \ref{ass:sol-smoothness} holds, then there exist $\bar\gamma \in (0, \infty)$ and $\bar \epsilon \in (0, \infty)$ such that the problem \eqref{eq:learn-numerical-single} with $(\gamma, \epsilon) \in [\bar\gamma,\infty] \times [0, \bar\epsilon]$ admits a solution $\alpha_{\gamma,\epsilon} \in \SPACEalphaCompactPosInt$, and the solution map 
    \[
        \SolM(\gamma, \epsilon) \defeq \argmin_{\alpha \in \SPACEalphaCompactPos}~\costf(u_{\alpha,\gamma,\epsilon})
    \]
    is outer semicontinuous within $[\bar\gamma,\infty] \times [0, \bar\epsilon]$.
\end{theorem}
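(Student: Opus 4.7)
The approach is the direct method, with the sole difficulty lying in ruling out components of the limiting parameter vanishing. Pick a minimising sequence $\{\alpha^i\} \subset \SPACEalphaCompactPos = [0,\infty]^\NA$. Compactness of $[0,\infty]^\NA$ in the product topology gives, up to a subsequence, $\alpha^i \to \hat\alpha$. The lower semicontinuity and compactness results anticipated in Section \ref{sec:auxres} then supply a weak* cluster point $u_{\hat\alpha}$ of the denoising minimisers $u_{\alpha^i}$, itself solving \eqref{eq:denoise} at $\hat\alpha$, and weak* lower semicontinuity of $\costf$ gives $\costf(\costK u_{\hat\alpha}) \le \liminf_i \costf(\costK u_{\alpha^i})$. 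Thus $\hat\alpha$ is optimal provided it is admissible; the only genuine task is to prove $\hat\alpha \in \SPACEalphaCompactPosInt = (0,\infty]^\NA$, and this is where condition \eqref{eq:l2cost-interior-condition-k0} enters.

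To establish the interior property, I argue by contradiction. Suppose $\hat\alpha_\ell = 0$ for some index $\ell$; I would produce $\tau\bar\alpha \in \SPACEalphaCompactPosInt$ with strictly smaller cost. With $\Phi(v) = \tfrac{1}{2}\|v - f\|_Y^2$ and $f \in \range K$, the value $\alpha = 0$ admits $u_0 \defeq \invK f$ as a minimiser of \eqref{eq:denoise}, so the baseline cost is $\costf(u_0) = \tfrac{1}{2}\|K_0 \invK f - f_0\|_Z^2$. Writing $g \defeq (K_0 \invK)^*(K_0 \invK f - f_0)$, condition \eqref{eq:l2cost-interior-condition-k0} combined with convexity of $\regfMARG$ yields $\iprod{\eta}{g}_Y \ge (\regfMARG{f} - \regfMARG{f - tg})/t > 0$ for every $\eta \in \partial \regfMARG{f}$. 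Now pass to the marginal formulation $\min_v \Phi(v) + \regfMARG[\tau\bar\alpha]{v}$ of \eqref{eq:denoise}: its minimiser satisfies $v_{\tau\bar\alpha} = f - \tau\eta_\tau$ for some $\eta_\tau \in \partial \regfMARG[\bar\alpha]{v_{\tau\bar\alpha}}$, and a first-order expansion of the cost yields $\costf(\invK v_{\tau\bar\alpha}) = \costf(u_0) - \tau \iprod{\eta_\tau}{g}_Y + O(\tau^2)$. Continuity of the subdifferential correspondence as $\tau \downarrow 0$ transfers the strict positivity $\iprod{\eta}{g}_Y > 0$ from $\partial \regfMARG{f}$ to $\eta_\tau$ for all sufficiently small $\tau > 0$, producing the strict improvement and the desired contradiction. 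The step-size condition $t \le 1/C_0$ combined with \eqref{eq:k0-condition} is precisely what ensures compatibility of $g$ as a descent direction in $Y$ for the reduced cost functional.

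For the smoothed problem \eqref{eq:learn-numerical-single} the same argument survives small perturbations in $(\gamma, \epsilon)$. Assumption \ref{ass:sol-smoothness} supplies smooth surrogates, making $\epsilon \Smoother$ contribute at most $O(\epsilon)$ to the denoising value, and the Huber regularisation differs from its unregularised counterpart by at most $|\Omega|/(2\gamma)$ times $\sum_j \alpha_j$ on bounded ranges of $\alpha$. Taking $\bar\gamma$ large and $\bar\epsilon$ small enough that these perturbations fall below the first-order gap $\tau \iprod{\eta}{g}_Y$ above, the strict improvement holds uniformly for $(\gamma,\epsilon) \in [\bar\gamma, \infty] \times [0, \bar\epsilon]$; existence of $\alpha_{\gamma,\epsilon} \in \SPACEalphaCompactPosInt$ then follows from the identical direct-method argument. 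Outer semicontinuity of $\SolM$ is finally a standard $\Gamma$-convergence consequence (the $\Gamma$-convergence $\JepsilonSYM \to \JorigSYM$ being one of the auxiliary results of Section \ref{sec:auxres}): any cluster point $\hat\alpha^*$ of $\alpha_{\gamma_k,\epsilon_k}$ as $(\gamma_k,\epsilon_k) \to (\gamma^*,\epsilon^*)$ inherits the uniform interior bound, and the $\Gamma$-liminf property of the reduced costs makes $\hat\alpha^*$ a minimiser of the limit problem, hence $\hat\alpha^* \in \SolM(\gamma^*,\epsilon^*)$.

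The main obstacle is the first-order perturbation step in the second paragraph: the regulariser $\regf{A\freevar}$ is nonsmooth, so the linearisation $v_{\tau\bar\alpha} = f - \tau\eta_\tau$ requires a careful subdifferential continuity statement, and the translation between the marginal formulation in $Y$ and the original formulation in $X$ must be made rigorous (probably most cleanly via the identifications $\regfMARG = \alpha\TV$, $\TGV^2_\alpha$, etc.\ of Examples \ref{example:tv-marginal}--\ref{example:tgv2-marginal}). The remaining ingredients --- lower semicontinuity, compactness, $\Gamma$-convergence --- are taken as given from Section \ref{sec:auxres}.
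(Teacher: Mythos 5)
Your global architecture matches the paper's: direct method for existence, a strictly better interior point to exclude the boundary, and $\Gamma$-convergence for outer semicontinuity of $\SolM$. However, there are two genuine gaps in the middle step, which is the heart of the theorem.

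First, your contradiction argument only treats the case $\hat\alpha=0$: you take $u_0=\invK f$ as the minimiser ``at $\alpha=0$'' and compare against $\costf(u_0)$. But $\SPACEalphaCompactPosInt=\SPACEalphaRCInt$, so the boundary also contains points where a \emph{single} component $\hat\alpha_\ell$ vanishes while the others stay positive or even infinite (the relevant case for $\TGV^2$ and $\ICTV$). For such $\hat\alpha$ it is not at all obvious that the reconstruction degenerates to $\invK f$; this is exactly what Assumption \ref{ass:aj-zeroterm-approx} (order reduction) is for, and the paper's Lemma \ref{lemma:alpha-to-zero} uses the surrogates $\bar f_{\delta,\ell}$ with $\sum_{j\ne\ell}\radonj{A_j\bar f_{\delta,\ell}}=0$ to show that $Ku_{\alpha^k,\gamma^k,\epsilon^k}\to f$ strongly whenever any one component tends to zero; combined with the $(K,2)$-minimality of $\bar f$ this forces $\liminf_k\costf(u^k)\ge\costf(\bar f)$ along any sequence approaching the boundary. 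You list Assumption \ref{ass:aj-zeroterm-approx} among the hypotheses but never invoke it, so the multi-parameter case is not covered by your sketch.

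Second, the step you yourself flag as ``the main obstacle'' --- the expansion $\costf(\invK v_{\tau\bar\alpha})=\costf(u_0)-\tau\iprod{\eta_\tau}{g}+O(\tau^2)$ together with ``continuity of the subdifferential correspondence as $\tau\downarrow 0$'' --- does not go through as proposed: $\subdiff\regfMARG[\bar\alpha]{\freevar}$ for $\TV$-type regularisers is only outer semicontinuous as a set-valued map, and the transfer of $\iprod{\eta}{g}>0$ from $\subdiff\regfMARG[\bar\alpha]{f}$ to the selections $\eta_\tau\in\subdiff\regfMARG[\bar\alpha]{v_{\tau\bar\alpha}}$ is precisely the kind of inner-continuity statement that fails for nonsmooth functionals. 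The paper avoids the limit $\tau\downarrow 0$ entirely: in Lemma \ref{lemma:cocoercive} it works at the \emph{fixed} $t$ of the hypothesis and combines $(K,2)$-co-coercivity, the subgradient inequality for $\regfMARG[\bar\alpha]{K\freevar}$ at $u_{t\bar\alpha}$, and the elementary comparison $\tfrac{1}{2}\norm{Ku_{t\bar\alpha}-f}_Y^2\le t\bigl(\regfMARG[\bar\alpha]{f}-\regfMARG[\bar\alpha]{Ku_{t\bar\alpha}}\bigr)$ to obtain
\[
\costf(\bar f)-\costf(u_{t\bar\alpha})\ \ge\ \regfMARG[\bar\alpha]{f}-\regfMARG[\bar\alpha]{f-t\varphi_{\bar f}}+\frac{\inv{t}-C}{2}\norm{Ku_{t\bar\alpha}-f}_Y^2,
\]
which is strictly positive under \eqref{eq:l2cost-interior-condition-k0} once $t\le 1/C_0$ --- this, and not a descent-direction compatibility, is the actual role of the restriction on $t$. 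I would recommend replacing your asymptotic argument by this one-shot estimate and adding the boundary analysis via Assumption \ref{ass:aj-zeroterm-approx}; the remaining ingredients of your sketch (compactness, lower semicontinuity, $\Gamma$-convergence) are consistent with the paper's Propositions \ref{prop:existence-compact} and \ref{prop:gamma-convergence}.
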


We prove this result in Section \ref{sec:proof}. Outer semicontinuity of a set-valued map $S: \R^k \setto \R^m$ means \cite{rockafellar-wets-va} that for any convergent sequence $x^k \to x$ and $S(x^k) \ni y^k \to y$, we have $y \in S(x)$. In particular, the outer semicontinuity of $\SolM$ means that as the numerical regularisation vanishes, the optimal parameters for the regularised models \eqref{eq:learn-numerical-single} tend to optimal parameters of the original model \eqref{eq:learn}.

\begin{remark}
    \label{remark:l2cost-interior-condition}
    Let $Z=Y$ and $K_0=K$ in Theorem \ref{thm:l2cost-main}. Then \eqref{eq:l2cost-interior-condition-k0}
    reduces to
    \begin{equation}
        \label{eq:l2cost-interior-condition}
        \regfMARG{f} > \regfMARG{f_0}.
    \end{equation}
    Also observe that our result requires, $\Phi \circ K$ to measure all the data that $F$ measures, in the more precise sense given by \eqref{eq:k0-condition}.
    If \eqref{eq:k0-condition} did not hold, an oscillating solution $u_\alpha$ for $\alpha \in \BD \SPACEalphaCompactPos$, could largely pass through the nullspace of $K$, hence have low value for the objective $J$ of the inner problem, yet have a large cost given by $F$.
\end{remark}

\begin{corollary}[Total variation Gaussian denoising]
    \label{corollary:tv-l2cost-interior}
    %In case of the total variation denoising of Example \ref{example:tv-1}, 
    Suppose $f, f_0 \in \BVspace(\Omega) \isect L^2(\Omega)$, and
    \begin{equation}
        \label{eq:l2cost-interior-condition-tv}
        \TV(f)
        >
        \TV(f_0).
    \end{equation}
    Then there exist $\bar \epsilon, \bar\gamma > 0$ such that any optimal solution $\alpha_{\gamma,\epsilon}$
    to the problem
    \[
        \alpha_{\gamma,\epsilon} \in \argmin_{\alpha \ge 0} \frac{1}{2}\norm{f_0-u_\alpha,\gamma,\epsilon}_{L^2(\Omega)}^2
    \]
    with
    \[
        %\begin{split}
        u_{\alpha,\gamma,\epsilon} \in \argmin_{u \in \BVspace(\Omega)} 
            \Bigl(
            %&
            \frac{1}{2}\norm{f-u}_{L^2(\Omega)}^2
            + \alpha\huberradon{Du}
            + \frac{\epsilon}{2}\norm{\grad v}_{L^2(\Omega; \R^\DIMdomain)}^2
            \Bigr)
        %\end{split}
    \]
    satisfies $\alpha_{\gamma,\epsilon} > 0$ whenever $\epsilon \in [0, \bar \epsilon]$, $\gamma \in [\bar\gamma, \infty]$.
\end{corollary}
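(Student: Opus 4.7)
The plan is to recognize the corollary as the specialisation of Theorem~\ref{thm:l2cost-main} via Remark~\ref{remark:l2cost-interior-condition} (reducing the interior condition) and Example~\ref{example:tv-marginal} (identifying the marginal as $\alpha\TV$). Concretely, I would take $X = \BVspace(\Omega) \isect L^2(\Omega)$, $Y = Z = L^2(\Omega)$, $K = K_0$ the canonical embedding so that \eqref{eq:k0-condition} holds trivially with $C_0 = 1$, $\NA = 1$, $A_1 = D$, the $H^1$ smoothing $\Smoother(u) = \tfrac12\norm{\grad u}^2_{\SPACEgraduLtwo}$, the $L^2$ fidelity $\Phi(v)=\tfrac12\norm{f-v}^2_{L^2(\Omega)}$ of Example~\ref{ex:sql2}, and the $L^2$ cost $\costf$ of Example~\ref{ex:cost}. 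With these identifications, the inner problem displayed in the corollary coincides verbatim with \eqref{eq:denoise-numerical-single}.

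Next, I would check that all assumptions of Theorem~\ref{thm:l2cost-main} are satisfied in this setting. Assumption~\ref{ass:a-k} is done in Example~\ref{example:tv-1}; Assumption~\ref{ass:phi} follows from Example~\ref{ex:sql2}, with the required $\bar f \in X$ available since $f \in \BVspace(\Omega)\isect L^2(\Omega) = X$ and $K$ is the embedding; Assumption~\ref{ass:aj-zeroterm-approx} holds with $\bar f_{\delta,1} = f$ as already observed in Example~\ref{example:tv-1}; and Assumption~\ref{ass:sol-smoothness} holds for the $H^1$ smoothing by standard strict approximation in $\BVspace(\Omega)$, as recorded after Definition~\ref{def:huber}.

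Finally, I would translate the hypothesis $\TV(f) > \TV(f_0)$ into condition~\eqref{eq:l2cost-interior-condition-k0}. Since $K_0 = K$ and $Y = Z$, Remark~\ref{remark:l2cost-interior-condition} tells us \eqref{eq:l2cost-interior-condition-k0} reduces to \eqref{eq:l2cost-interior-condition}, i.e.\ $\regfMARG{f} > \regfMARG{f_0}$; and by Example~\ref{example:tv-marginal} we have $\regfMARG[\bar\alpha]{v} = \bar\alpha\TV(v)$, so any choice $\bar\alpha \in (0,\infty)$ (e.g.\ $\bar\alpha = 1$) together with the standing hypothesis makes the inequality hold. Applying Theorem~\ref{thm:l2cost-main} then yields $\bar\gamma, \bar\epsilon > 0$ and $\alpha_{\gamma,\epsilon} \in \SPACEalphaCompactPosInt = (0,\infty]$ for all $\gamma \in [\bar\gamma,\infty]$ and $\epsilon \in [0,\bar\epsilon]$, which is exactly the claim. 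I do not anticipate any genuine obstacle; the corollary is a concrete reading of Theorem~\ref{thm:l2cost-main}, and the only minor care needed is to match the inner functional of the corollary with $\JepsilonSYM$ under the present $\Phi$, $A_1$, and $\Smoother$.
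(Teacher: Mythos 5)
Your proposal is correct and follows essentially the same route as the paper's own proof: verify Assumptions \ref{ass:a-k}, \ref{ass:aj-zeroterm-approx}, and \ref{ass:sol-smoothness} via Example \ref{example:tv-1}, note $K_0=K=I$ so that Remark \ref{remark:l2cost-interior-condition} reduces \eqref{eq:l2cost-interior-condition-k0} to $\regfMARG{f} > \regfMARG{f_0}$, and use Example \ref{example:tv-marginal} to identify $\regfMARG{v}=\bar\alpha\TV(v)$ so that the hypothesis $\TV(f)>\TV(f_0)$ gives the required condition (with $t=1$), after which Theorem \ref{thm:l2cost-main} yields the claim. No gaps.
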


That is, for the optimal parameter to be strictly positive, the noisy image $f$ should, in terms of the total variation, oscillate more than the noise-free image $f_0$. This is a very natural condition: if the noise somehow had smoothed out features from $f_0$, then we should not smooth it anymore by $\TV$ regularisation!

\begin{proof}
    Assumption \ref{ass:a-k}, \ref{ass:aj-zeroterm-approx}, and \ref{ass:sol-smoothness} we
    have already verified in Example \ref{example:tv-1}.
    We then observe that $K_0=K$, so we are in the setting of Remark \ref{remark:l2cost-interior-condition}.
    Following the mapping of the TV problem to the general framework using the construction in Example \ref{example:tv-1}, we have $K=I$ and $\invK=I$ embeddings with $Y = L^2(\Omega)$.
    $\invK$ is bounded on $\range{K}=L^2(\Omega) \isect \BVspace(\Omega)$.
    Moreover, by Example \ref{example:tv-marginal}, $\regfMARG{v}=\bar\alpha\TV(v)$.
    Thus \eqref{eq:l2cost-interior-condition} with the choice $t=1$ reduces to \eqref{eq:l2cost-interior-condition-tv}.
\end{proof}

For $\TGV^2$ we also have a very natural condition.

\begin{corollary}[Second-order total generalised variation Gaussian denoising]
    \label{corollary:tgv2-l2cost-interior}
    %In case of second order total generalised varition denoising of Example \ref{example:tgv2-1},
    Suppose that the data $f, f_0 \in L^2(\Omega) \isect \BVspace(\Omega)$ satisfies for some $\alpha_2>0$ the condition
    \begin{equation}
        \label{eq:l2cost-interior-condition-tgv2}
        \TGV^2_{(\alpha_2, 1)}(f)
        >
        \TGV^2_{(\alpha_2, 1)}(f_0).
    \end{equation}
    Then there exists $\bar \epsilon,\bar\gamma > 0$ such that any optimal solution $\alpha_{\gamma,\epsilon}=((\alpha_{\gamma,\epsilon})_1, (\alpha_{\gamma,\epsilon})_2)$ to the problem
    \[
        \alpha_{\gamma,\epsilon} \in  \argmin_{\alpha \ge 0} \frac{1}{2}\norm{f_0-v_{\alpha,\gamma,\epsilon}}_{L^2(\Omega)}^2
    \]
    with
    \[
        \begin{split}
        (v_{\alpha,\gamma,\epsilon}, w_{\alpha,\gamma,\epsilon}) \in \argmin_{\substack{v \in \BVspace(\Omega)\\ w \in \BDspace(\Omega)}} 
            \Bigl(
            &
            \frac{1}{2}\norm{f-v}_{L^2(\Omega)}^2
            + \alpha_1\huberradon{Dv-w}
            + \alpha_2\huberradon{Ew}
            \\
            &
            + \frac{\epsilon}{2}\norm{(\grad v, \grad w)}_{L^2(\Omega; \R^\DIMdomain \times \R^{\DIMdomain \times \DIMdomain})}^2
            \Bigr)
        \end{split}
    \]
    satisfies $(\alpha_{\gamma,\epsilon})_1, (\alpha_{\gamma,\epsilon})_2 > 0$ whenever $\epsilon \in [0, \bar \epsilon]$, $\gamma \in [\bar\gamma, \infty]$.
\end{corollary}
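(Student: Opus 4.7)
The plan is to mirror the argument for Corollary \ref{corollary:tv-l2cost-interior}, substituting the TV framework of Example \ref{example:tv-1} with the $\TGV^2$ framework of Example \ref{example:tgv2-1} and using the marginalisation identity of Example \ref{example:tgv2-marginal}. The proof reduces to verifying the hypotheses of Theorem \ref{thm:l2cost-main} and translating its interior condition into the stated $\TGV^2$ inequality.

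First I would check that the standing assumptions of the abstract framework are satisfied. Example \ref{example:tgv2-1} already verifies Assumption \ref{ass:a-k} and Assumption \ref{ass:aj-zeroterm-approx} for the operators $A_1 u = Dv - w$, $A_2 u = Ew$, and $K(v,w) = v$. Assumption \ref{ass:sol-smoothness} holds with the standard $H^1$-smoother $\Smoother(v,w) = \tfrac{1}{2}(\norm{\grad v}_{L^2}^2 + \norm{\grad w}_{L^2}^2)$ by the $\TGV$-strict approximation results cited right after Example \ref{example:tgv2-1}. For Assumption \ref{ass:phi}, the squared $L^2$ fidelity of Example \ref{ex:sql2} suffices, its minimiser $v = f$ belonging to $\range K$ via the pre-image $\bar f = (f, 0) \in X$ (which is legitimate because $f \in \BVspace(\Omega) \isect L^2(\Omega)$ and $0 \in \BDspace(\Omega)$). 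Since $K$ is a linear projection, we take $K_0 = K$ so that \eqref{eq:k0-condition} holds trivially with $C_0 = 1$, placing us in the setting of Remark \ref{remark:l2cost-interior-condition}.

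Next I would reduce the sufficient condition of Theorem \ref{thm:l2cost-main} to the stated hypothesis. By Remark \ref{remark:l2cost-interior-condition}, condition \eqref{eq:l2cost-interior-condition-k0} with $t = 1$ collapses to \eqref{eq:l2cost-interior-condition}, namely the existence of some $\bar\alpha \in \SPACEalphaPosInt$ with $\regfMARG{f} > \regfMARG{f_0}$. Example \ref{example:tgv2-marginal} provides the identification $\regfMARG{v} = \TGV^2_{\bar\alpha}(v)$; choosing $\bar\alpha = (\alpha_2, 1)$ with the $\alpha_2 > 0$ furnished by the hypothesis turns this into exactly the assumed inequality $\TGV^2_{(\alpha_2, 1)}(f) > \TGV^2_{(\alpha_2, 1)}(f_0)$. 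Theorem \ref{thm:l2cost-main} then delivers thresholds $\bar\gamma, \bar\epsilon > 0$ such that every minimiser $\alpha_{\gamma,\epsilon}$ lies in $\SPACEalphaCompactPosInt = (0, \infty]^2$, which is precisely the claim that both components are strictly positive. I anticipate no genuine analytical obstacle beyond a notational check that the coordinate ordering in $\bar\alpha = (\alpha_2, 1)$ matches the identification of $\regfMARGSYM$ with $\TGV^2_{(\cdot,\cdot)}$ used in Example \ref{example:tgv2-marginal} -- a purely bookkeeping step that falls out of the definitions.
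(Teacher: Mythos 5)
Your proposal is correct and follows essentially the same route as the paper's proof: verify Assumptions \ref{ass:a-k}, \ref{ass:aj-zeroterm-approx} and \ref{ass:sol-smoothness} via Example \ref{example:tgv2-1}, note $K_0=K$ so that Remark \ref{remark:l2cost-interior-condition} reduces the interior condition to \eqref{eq:l2cost-interior-condition}, and use the identification $\regfMARG{v}=\TGV^2_{\bar\alpha}(v)$ from Example \ref{example:tgv2-marginal} with $t=1$ to recover \eqref{eq:l2cost-interior-condition-tgv2}. Your write-up is in fact more explicit than the paper's (which leaves the verification of Assumption \ref{ass:phi} and the choice of $\bar f=(f,0)$ implicit), but there is no substantive difference.
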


\begin{proof}
    Assumption \ref{ass:a-k}, \ref{ass:aj-zeroterm-approx}, and \ref{ass:sol-smoothness} we
    have already verified in Example \ref{example:tgv2-1}.
    We then observe that $K_0=K$, so we are in the setting of Remark \ref{remark:l2cost-interior-condition}. By Example \ref{example:tgv2-marginal}, $\regfMARG{v} = \TGV^2_{\bar\alpha}(v)$.
    Finally, similarly to \eqref{eq:l2cost-interior-condition-tv}, we get for \eqref{eq:l2cost-interior-condition} with $t=1$ the condition \eqref{eq:l2cost-interior-condition-tgv2}.
\end{proof}

\begin{example}[Fourier reconstructions]
    Let $K_0$ be given, for example as constructed in Example \ref{example:tv-1} or Example \ref{example:tgv2-1}.
    If we take $K=\mathcal{F} K_0$ for $\mathcal{F}$ the Fourier transform -- or any other unitary transform -- then \eqref{eq:k0-condition} is satisfied
    and $\invK=\invK_0 \mathcal{F}^*$.
    Thus \eqref{eq:l2cost-interior-condition-k0} becomes
    \begin{equation}
        \notag
        \regfMARG{f} > \regfMARG{f-t(K_0 \invK_0 \mathcal{F}^*)^* (K_0 \invK_0 \mathcal{F}^*  f-f_0) }.
    \end{equation}
    With $\mathcal{F}^* f, f_0 \in \range{K_0}$ and $t=1$ this just reduces to
    \begin{equation}
        \notag
        \regfMARG{f} > \regfMARG{\mathcal{F} f_0}.
    \end{equation}
    
    Unfortunately, our results do not cover parameter learning for reconstruction from partial Fourier samples exactly because of \eqref{eq:k0-condition}. 
    What we can do is to find the optimal parameters if we only know a part of the ground-truth, but have full noisy data. 
\end{example}

\subsection{Huberised total variation and other $L^1$-type costs with $L^2$-squared fidelity}

We now consider the alternative ``Huberised total variation'' cost functional from \ref{ex:cost}. Unfortunately, we are unable to derive for $\costf_{\costhubertv}$ easily interpretable conditions as for the $\costf_{\costltwo}$. If we discretise the definition in the following sense, then we however get natural conditions. So, we let $\tilde f_0 \in Z$, assuming $Z$ is a reflexive Banach space, and pick $\eta \in (0, \infty]$. We define 
\[
    \costf_{\costlone}(z) \defeq \sup_{\norm{\lambda}_{Z^*} \le 1}~ \dprod{\lambda}{z-\tilde f_0} - \frac{1}{2\eta}\norm{\lambda}_{Z^*}^2.
\]
If 
\[
    V = \{ \xi_1, \ldots, \xi_M \} \subset \{ \xi \in C_c^\infty(\Omega; \R^\DIMdomain) \mid \norm{\xi} \le 1\},
\]
is finite-dimensional, we define
\[
    D^V v \defeq \{\dprod{-\divergence \xi_1}{v}, \ldots, \dprod{-\divergence \xi_M}{v} \},
    \quad
    (v \in \BVspace(\Omega)).
\]
We may now approximate $\costf_{\costhubertv}$ by
\[
    \costf_{\costhubertv}^V \defeq \costf_{\costlone} \circ D^V
    \quad
    \text{where } \tilde f_0 \defeq D^V f_0 \text { and }  Z=\R^M \text{ with } \infty\text{-norm},
\]
We return to this approximation after the following general results on $\costf_{\costlone}$.

\begin{theorem}
    \label{thm:hubercost-main}
    Let $Y$ be a Hilbert space, and $Z$ a reflexive Banach space. Let $\costf=\costf_{\costlone} \circ K_0$, and $\Phi(v)=\frac{1}{2}\norm{f-v}_Y^2$
    for some \emph{compact} linear operator $K_0: X \to Z$ satisfying \eqref{eq:k0-condition} and $f \in \range{K}$.
    Suppose Assumption \ref{ass:a-k} and \ref{ass:aj-zeroterm-approx} hold.
    If for some $\bar\alpha \in \SPACEalphaPosInt$ and $t>0$ holds
    \begin{equation}
        \label{eq:hubercost-interior-condition-k0}
        \regfMARG{f} > \regfMARG{f - t(K_0 \invK)^* \lambda},
        \quad
        \lambda \in \subdiff \costf_{\costlone}(K_0 \invK f),
    \end{equation}
    then the problem  \eqref{eq:learn} admits a solution $\hat \alpha \in \SPACEalphaCompactPosInt$.

    If, moreover, \ref{ass:sol-smoothness} holds, then there exists there exist $\bar\gamma \in (0, \infty)$ and $\bar \epsilon \in (0, \infty)$ such that the problem \eqref{eq:learn-numerical-single} with $(\gamma, \epsilon) \in [\bar\gamma,\infty] \times [0, \bar\epsilon]$ admits a solution $\alpha_{\gamma,\epsilon} \in \SPACEalphaCompactPosInt$, and the solution map $\SolM$ is outer semicontinuous within $[\bar\gamma,\infty] \times [0, \bar\epsilon]$.
\end{theorem}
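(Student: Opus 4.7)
The plan is to mirror the proof of Theorem~\ref{thm:l2cost-main} with two adaptations forced by the $L^1$-type cost: the non-smoothness of $\costf_{\costlone}$ obliges us to work with subgradients $\lambda \in \subdiff \costf_{\costlone}(K_0 \invK f)$ in the interior-perturbation argument, and we must exploit compactness of $K_0$ to pass to the limit in the outer cost, in place of the weak* lower semicontinuity that the squared norm gave us for free in the $L^2$ case.

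Assuming the interior-boundedness property, existence is obtained by the direct method. Take a minimising sequence $\{\alpha^k\} \subset \SPACEalphaCompactPos$; compactness of the cube gives $\alpha^k \to \hat\alpha$. Coercivity and weak* lower semicontinuity results from Section~\ref{sec:auxres}, combined with Assumption~\ref{ass:a-k}, yield a weak*-convergent subsequence $u_{\alpha^k} \weaktostar u_{\hat\alpha}$ in $X$, the identification of the limit following from weak* lower semicontinuity of the inner objective and continuity of $\alpha \mapsto \regf{Au}$. Crucially, compactness of $K_0$ upgrades this to strong convergence $K_0 u_{\alpha^k} \to K_0 u_{\hat\alpha}$ in $Z$, and since $\costf_{\costlone}$ is Lipschitz on $Z$ (as a Moreau--Yosida regularisation of $\norm{\freevar - \tilde f_0}_Z$), we obtain $\costf(u_{\alpha^k}) \to \costf(u_{\hat\alpha})$, proving optimality of $\hat\alpha$.

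The interior property $\hat\alpha \in \SPACEalphaCompactPosInt$ is the main obstacle. Upper boundedness $\hat\alpha_\ell < \infty$ follows as in Theorem~\ref{thm:l2cost-main} by testing against the competitors $\bar f_{\delta,\ell}$ supplied by Assumption~\ref{ass:aj-zeroterm-approx}. For strict positivity I argue by contradiction: assume $\hat\alpha_\ell = 0$ for some $\ell$ and consider the ray $\alpha(s) = \hat\alpha + s\bar\alpha$ with $\bar\alpha$ from~\eqref{eq:hubercost-interior-condition-k0} and $s > 0$ small. As $s \downto 0$ the inner solution $u_{\alpha(s)}$ converges along a subsequence to a solution of the inner problem at $\hat\alpha$, which on the degenerate piece coincides with $\invK f$ after suitable selection. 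Testing the inner-problem optimality at $\alpha(s)$ against $\invK(f - t(K_0\invK)^*\lambda)$, combining with the subgradient inequality $\costf_{\costlone}(z) \ge \costf_{\costlone}(K_0\invK f) + \iprod{\lambda}{z - K_0\invK f}$ applied at $z = K_0 u_{\alpha(s)}$, and invoking~\eqref{eq:hubercost-interior-condition-k0} to trade regulariser value for cost value, yields $\costf(u_{\alpha(s)}) < \costf(u_{\hat\alpha})$ for small $s$, contradicting optimality. The delicate point here is the selection of the limiting inner solution as $s \downto 0$, which has to be compatible both with the subgradient $\lambda$ and with the inner optimality; this is where much of the technical work will sit.

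The regularised problem \eqref{eq:learn-numerical-single} near $(\gamma,\epsilon) = (\infty,0)$ is handled by a $\Gamma$-convergence argument. Assumption~\ref{ass:sol-smoothness} supplies recovery sequences so that $\Jepsilon[\alpha]{\freevar}$ $\Gamma$-converges to $\Jorig[\alpha]{\freevar}$ jointly in $\alpha$, giving outer semicontinuity of the inner solution map. Because~\eqref{eq:hubercost-interior-condition-k0} is a strict inequality with a subgradient $\lambda$ independent of $(\gamma,\epsilon)$, the perturbation argument above produces a quantitative cost decrease uniform over some neighbourhood $[\bar\gamma,\infty] \times [0,\bar\epsilon]$, placing $\alpha_{\gamma,\epsilon} \in \SPACEalphaCompactPosInt$ for $(\gamma,\epsilon)$ in this neighbourhood. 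Outer semicontinuity of $\SolM$ then follows from the joint convergence of optimal values and inner solutions.
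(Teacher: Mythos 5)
Your overall skeleton --- direct method for existence, exclusion of the boundary $\BD \SPACEalphaCompactPos$ via the structural condition, and a $\Gamma$-convergence argument for $(\gamma,\epsilon)$ near $(\infty,0)$ --- is the same as the paper's route through Propositions \ref{prop:existence-compact}, \ref{prop:conditions-onealpha} and \ref{prop:gamma-convergence}. But there is a genuine gap exactly at the point you yourself flag as ``delicate'', and it is not a technicality one can defer: it is the reason this theorem needs a different mechanism from Theorem \ref{thm:l2cost-main}. The cost $\costf_{\costlone}\circ K_0$ is only $(K,1)$-co-coercive: the error term in the two-sided subgradient estimate is $C\norm{Ku-f}_Y$ to the \emph{first} power, and so it cannot be absorbed by the term $\frac{1}{t}\norm{Ku_{t\bar\alpha}-f}_Y^2$ that the inner optimality condition produces (this absorption is precisely what makes the $L^2$ argument of Lemma \ref{lemma:cocoercive} work directly at $\bar f$). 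Consequently one cannot ``test the inner-problem optimality against $\invK(f-t(K_0\invK)^*\lambda)$ and invoke \eqref{eq:hubercost-interior-condition-k0}'' with the subgradient $\lambda$ taken at $K_0\invK f$: the co-coercivity inequality has to be anchored at an actual inner solution $u_{\doublebar\alpha}$ with its own subgradient, i.e.\ one needs $\regfMARG[\doublebar\alpha]{Ku_{\doublebar\alpha}} > \regfMARG[\doublebar\alpha]{f-t\varphi_{u_{\doublebar\alpha}}}$ as in Lemma \ref{lemma:conditions-general-squared}. The paper bridges from the hypothesis at $\bar f$ to this condition by taking $\doublebar\alpha=s^k\bar\alpha$ with $s^k\downto 0$, using Lemma \ref{lemma:alpha-to-zero} to get $K_0 u_{s^k\bar\alpha}\to K_0\bar f$ strongly, extracting $\lambda_{s^k\bar\alpha}\weaktostar\lambda_0\in\subdiff\costf_{\costlone}(K_0\invK f)$ by outer semicontinuity of the subdifferential, and then --- this is where the compactness of $K_0$ is indispensable --- upgrading $(K_0\invK)^*\lambda_{s^k\bar\alpha}\to(K_0\invK)^*\lambda_0$ to \emph{strong} convergence in $Y$ via compactness of $K_0^*$, so that the continuity of $\regfMARG{\freevar}$ on $\range{K}$ (Lemma \ref{lemma:t-lsc}) transfers the strict inequality. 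Your proposal uses compactness of $K_0$ only in the (comparatively easy) existence step and supplies no mechanism for this transfer, which is the actual content of the proof.

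Two further, smaller points. First, your ray $\alpha(s)=\hat\alpha+s\bar\alpha$ creates an avoidable mismatch: the inner optimality condition at $\alpha(s)$ involves the subdifferential of $\regfMARG[\hat\alpha+s\bar\alpha]{K\freevar}$, while \eqref{eq:hubercost-interior-condition-k0} is a statement about $\regfMARG[\bar\alpha]{\freevar}$, and since the marginalised functional does not split additively in $\alpha$ the ``trade regulariser value for cost value'' step does not go through; the paper's ray $s\bar\alpha$ from the origin keeps the weights matched. Second, the claimed ``upper boundedness $\hat\alpha_\ell<\infty$'' is neither needed nor asserted by the theorem: $\SPACEalphaCompactPosInt=\SPACEalphaRCInt$ contains points with infinite components by design, and only strict positivity has to be proved.
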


We prove this result in Section \ref{sec:proof}.

\begin{remark}
    \label{remark:hubercost-interior-condition}
    If $K_0=K$, the condition \eqref{eq:hubercost-interior-condition-k0} has the much more legible form
    \begin{equation}
        \label{eq:hubercost-interior-condition}
        \regfMARG{f} > \regfMARG{f-t \lambda},
        \quad
        \lambda \in \subdiff \costf_{\costlone}(f),
    \end{equation}
    Also if $K$ is compact, then the compactness of $K_0$ follows from \eqref{eq:k0-condition}.
    In the following applications, $K$ is however not compact for typical domains $\Omega \subset \R^2$ or $\R^3$, so we have to make $K_0$ compact by making the range finite-dimensional.
\end{remark}

\begin{corollary}[Total variation Gaussian denoising with discretised Huber-TV cost]
    \label{corollary:tv-hubercost-interior}
    %In case of the total variation denoising of Example \ref{example:tv-1}, 
    Suppose that the data satisfies $f, f_0 \in \BVspace(\Omega) \isect L^2(\Omega)$ and
    for some $t>0$ and $\xi \in V$ the condition
    \begin{equation}
        \label{eq:hubercost-interior-condition-tv}
        \TV(f)
        >
        \TV(f+t\divergence \xi),
        \quad
        - \divergence \xi \in \subdiff \costf_{\costhubertv}^V(f).
    \end{equation}
    Then there exists $\bar \epsilon,\bar\gamma > 0$ such any optimal solution $\alpha_{\gamma,\epsilon}$ to the problem
    \[
        \min_{\alpha \ge 0} \costf_{\costhubertv}^V(f_0-v_\alpha)
    \]
    with
    \[
        %\begin{split}
        u_\alpha \in \argmin_{u \in \BVspace(\Omega)} 
            \Bigl(
            %&
            \frac{1}{2}\norm{f-u}_{L^2(\Omega)}^2
            + \alpha\huberradon{Du}
            + \frac{\epsilon}{2}\norm{\grad v}_{L^2(\Omega; \R^\DIMdomain)}^2
            \Bigr)
        %\end{split}    
    \]
    satisfies $\alpha_{\gamma,\epsilon} > 0$ whenever $\epsilon \in [0, \bar \epsilon]$, $\gamma \in [\bar\gamma, \infty]$.
\end{corollary}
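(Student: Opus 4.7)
The plan is to verify that this corollary is a direct instance of Theorem \ref{thm:hubercost-main}, with the choices of spaces and operators dictated by the total variation denoising framework of Example \ref{example:tv-1} combined with the discretised Huberised TV cost construction that precedes Theorem \ref{thm:hubercost-main}. Concretely, I would take $X=\BVspace(\Omega)\isect L^2(\Omega)$ with the topology of Example \ref{example:tv-1}, $Y=L^2(\Omega)$ (Hilbert), $K$ the identity embedding $X\hookrightarrow Y$, $A_1=D$, and $\Phi(v)=\tfrac12\norm{f-v}_{L^2(\Omega)}^2$. Assumptions \ref{ass:a-k}, \ref{ass:aj-zeroterm-approx} and \ref{ass:sol-smoothness} have all been verified in Example \ref{example:tv-1}, and the properties of $\Phi$ come from Example \ref{ex:sql2}.

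Next I would handle the cost. With $Z=\R^M$ endowed with the $\infty$-norm (a reflexive Banach space), $K_0\defeq D^V$ and $\tilde f_0\defeq D^V f_0$, the cost in the corollary takes exactly the form $F=\costf_{\costlone}\circ K_0$ required by Theorem \ref{thm:hubercost-main}. Compactness of $K_0$ is automatic because $D^V$ has finite-dimensional range, and \eqref{eq:k0-condition} follows from integration-by-parts bounds
\[
    \adaptabs{\dprod{-\divergence \xi_i}{u}}\le\norm{\divergence \xi_i}_{L^2(\Omega)}\norm{u}_{L^2(\Omega)},
\]
so that $\norm{D^V u}_\infty\le C_0\norm{Ku}_Y$ with $C_0=\max_i\norm{\divergence \xi_i}_{L^2(\Omega)}$.

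The central step is to translate the abstract interior condition \eqref{eq:hubercost-interior-condition-k0} into the concrete condition \eqref{eq:hubercost-interior-condition-tv}. Because $K$ is the embedding, we may take $\invK$ as the identity on $\range{K}$, so $K_0\invK f=D^V f$ and the adjoint satisfies
\[
    (K_0\invK)^*\lambda=\sum_{i=1}^M\lambda_i(-\divergence \xi_i)=-\divergence \xi,
    \quad\text{with }\xi\defeq\sum_i\lambda_i\xi_i.
\]
By the standard convex-analysis chain rule (applicable since $D^V$ is bounded linear and $\costf_{\costlone}$ is convex and continuous on the finite-dimensional $Z$), $\subdiff\costf_{\costhubertv}^V(f)=(D^V)^*\subdiff\costf_{\costlone}(D^V f)$, so elements $\lambda\in\subdiff\costf_{\costlone}(D^V f)$ are in bijection with elements $-\divergence \xi\in\subdiff\costf_{\costhubertv}^V(f)$. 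Combining this with $\regfMARG[\bar\alpha]{v}=\bar\alpha\TV(v)$ from Example \ref{example:tv-marginal}, \eqref{eq:hubercost-interior-condition-k0} collapses to $\TV(f)>\TV(f+t\divergence \xi)$, which is exactly \eqref{eq:hubercost-interior-condition-tv}. Invoking Theorem \ref{thm:hubercost-main} then yields $\bar\gamma,\bar\epsilon>0$ with $\alpha_{\gamma,\epsilon}\in\SPACEalphaCompactPosInt=(0,\infty]$ for $\epsilon\in[0,\bar\epsilon]$, $\gamma\in[\bar\gamma,\infty]$, which is the scalar positivity claimed.

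I do not anticipate a genuine obstacle here, as the proof is essentially bookkeeping: all difficulties have been absorbed into Theorem \ref{thm:hubercost-main} and the verifications of Example \ref{example:tv-1}. The only subtlety worth writing out carefully is the identification of $(K_0\invK)^*\lambda$ with $-\divergence\xi$ and the chain-rule passage between subdifferentials of $\costf_{\costlone}\circ D^V$ and of $\costf_{\costlone}$, so that the interior condition in the theorem translates to the geometric TV-oscillation condition \eqref{eq:hubercost-interior-condition-tv} of the corollary.
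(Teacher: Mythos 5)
Your proposal is correct and follows essentially the same route as the paper, whose entire proof is the remark that the argument is analogous to Corollary \ref{corollary:tv-l2cost-interior}: instantiate Theorem \ref{thm:hubercost-main} with the $\TV$ setup of Example \ref{example:tv-1}, take $K_0=D^V$, and use $\regfMARG{v}=\bar\alpha\TV(v)$ to collapse \eqref{eq:hubercost-interior-condition-k0} to \eqref{eq:hubercost-interior-condition-tv}. Your explicit verification of compactness of $D^V$, of \eqref{eq:k0-condition} via integration by parts, and of the identification $(K_0\invK)^*\lambda=-\divergence\bigl(\sum_i\lambda_i\xi_i\bigr)$ together with the chain rule for $\subdiff(\costf_{\costlone}\circ D^V)$ simply spells out the bookkeeping the paper leaves implicit.
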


This says that for the optimal parameter to be strictly positive, the noisy image $f$ should oscillate more than the image $f+t \divergence \xi$ in  the direction of the \emph{(discrete) total variation flow}. This is a very natural condition, and we observe that the non-discretised counterpart of \eqref{eq:hubercost-interior-condition-tv} for $\gamma=\infty$ would be
\begin{equation}
    \notag
    \TV(f)
    >
    \TV(f+t\divergence\xi),
    \quad
    \xi \in \Sgn(Df_0-Df),
\end{equation}
where we define for a measure $\mu \in \Meas(\Omega; \R^m)$ the sign
\[
    \Sgn(\mu) \defeq \{ \xi \in L^1(\Omega; \mu) \mid \mu=\xi\abs{\mu} \}.
\]
That is, $-\divergence \xi$ is the total variation flow.

\begin{proof}
    Analogous to Corollary \ref{corollary:tv-l2cost-interior} regarding the $L^2$ cost.
\end{proof}

For $\TGV^2$ we also have an analogous natural condition.

\begin{corollary}[$\TGV^2$ Gaussian denoising with discretised Huber-TV cost]
    \label{corollary:tgv2-hubercost-interior}
    %In case of second order total generalised varition denoising of Example \ref{example:tgv2-1},
    Suppose that the data $f, f_0 \in L^2(\Omega)$ satisfies for some $t, \alpha_2>0$ and
    $\lambda \in V$ the condition
    \begin{equation}
        \label{eq:hubercost-interior-condition-tgv2}
        \TGV^2_{(\alpha_2, 1)}(f)
        >
        \TGV^2_{(\alpha_2, 1)}(f+t\divergence\lambda),
        \quad
        -\divergence \lambda \in \subdiff \costf_{\costhubertv}^V(f).
    \end{equation}
    Then there exists $\bar \epsilon,\bar\gamma > 0$ such any optimal solution $\alpha_{\gamma,\epsilon}=((\alpha_{\gamma,\epsilon})_1, (\alpha_{\gamma,\epsilon})_2)$ to the problem
    \[
        \min_{\alpha \ge 0} \costf_{\costhubertv}^V(f_0-v_\alpha)
    \]
    with
    \[
        \begin{split}
        (v_\alpha, w_\alpha) \in \argmin_{\substack{v \in \BVspace(\Omega)\\ w \in \BDspace(\Omega)}} 
            \Bigl(
            &
            \frac{1}{2}\norm{f-v}_{L^2(\Omega)}^2
            + \alpha_1\huberradon{Dv-w}
            + \alpha_2\huberradon{Ew}
            \\
            &
            + \frac{\epsilon}{2}\norm{(\grad v, \grad w)}_{L^2(\Omega; \R^\DIMdomain \times \R^{\DIMdomain \times \DIMdomain})}^2
            \Bigr)
        \end{split}
    \]
    satisfies $(\alpha_{\gamma,\epsilon})_1, (\alpha_{\gamma,\epsilon})_2 > 0$ whenever $\epsilon \in [0, \bar \epsilon]$, $\gamma \in [\bar\gamma, \infty]$.
\end{corollary}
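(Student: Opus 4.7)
The plan is to apply Theorem~\ref{thm:hubercost-main} within the $\TGV^2$ framework of Example~\ref{example:tgv2-1}, paralleling the strategy of Corollary~\ref{corollary:tgv2-l2cost-interior} but with the Huberised $L^1$ cost in place of the $L^2$ cost. Assumptions \ref{ass:a-k}, \ref{ass:aj-zeroterm-approx}, and \ref{ass:sol-smoothness} are already verified for $\TGV^2$ in Example~\ref{example:tgv2-1}, so it suffices to set up the data correctly and translate the abstract interior condition \eqref{eq:hubercost-interior-condition-k0}.

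First I will identify the theorem's operator ``$K_0$'' with the composition $D^V \circ K$, where $K(v,w)=v$ is the $\TGV^2$ forward operator from $X=(\BVspace(\Omega) \isect L^2(\Omega))\times \BDspace(\Omega)$ to $L^2(\Omega)$, and $D^V: L^2(\Omega) \to \R^M$ is the bounded linear map with components $v \mapsto \iprod{-\divergence \xi_j}{v}$ (bounded since $\xi_j \in C_c^\infty$ forces $\divergence \xi_j \in L^2(\Omega)$). Then $\costf = \costf_{\costhubertv}^V \circ K = \costf_{\costlone}\circ(D^V\circ K)$ matches the form $\costf_{\costlone}\circ K_0$ required by Theorem~\ref{thm:hubercost-main}. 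The composition $K_0 = D^V\circ K$ has finite-dimensional range and is therefore compact, and \eqref{eq:k0-condition} holds with $C_0 = \max_j \norm{\divergence \xi_j}_{L^2(\Omega)}$.

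Next I will translate \eqref{eq:hubercost-interior-condition-k0}. Taking $\invK v = (v,0)$ as in Example~\ref{example:tgv2-marginal}, we have $K_0 \invK = D^V$ on $\range{K} = L^2(\Omega)$, whence $(K_0 \invK)^* \lambda = (D^V)^* \lambda = -\divergence\bigl(\sum_{j=1}^M \lambda_j \xi_j\bigr)$ for $\lambda=(\lambda_1,\ldots,\lambda_M) \in (\R^M)^*$. Writing $\xi \defeq \sum_j \lambda_j \xi_j$, the convex-subdifferential chain rule yields $-\divergence\xi = (D^V)^*\lambda \in \subdiff(\costf_{\costlone}\circ D^V)(f) = \subdiff\costf_{\costhubertv}^V(f)$. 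Combined with the identity $\regfMARG{v}=\TGV^2_{\bar\alpha}(v)$ from Example~\ref{example:tgv2-marginal} and the choice $\bar\alpha=(1,\alpha_2)$, condition \eqref{eq:hubercost-interior-condition-k0} reduces to \eqref{eq:hubercost-interior-condition-tgv2}, and Theorem~\ref{thm:hubercost-main} delivers $\bar\gamma,\bar\epsilon>0$ such that any optimal $\alpha_{\gamma,\epsilon}$ lies in $\SPACEalphaCompactPosInt$, i.e.\ both components are strictly positive.

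The main obstacle I expect is the adjoint and chain-rule bookkeeping of the second step: namely, interpreting the ``$\lambda \in V$'' of the statement as an admissible witness produced by $(D^V)^*$ acting on a subgradient of $\costf_{\costlone}$ at $D^V f$, and checking that the convex chain rule applies in the finite-dimensional range with the dual pairing used to define $\costf_{\costlone}$. Once this identification is made, the argument is purely structural and runs in parallel with Corollary~\ref{corollary:tv-hubercost-interior}, so no new coercivity or compactness issues arise beyond those already handled for the $\TGV^2$ inner problem.
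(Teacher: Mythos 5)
Your proposal is correct and follows exactly the route the paper intends: its own proof is the one-liner ``analogous to Corollary~\ref{corollary:tgv2-l2cost-interior}'', which amounts to invoking Theorem~\ref{thm:hubercost-main} with the finite-rank (hence compact) operator $K_0 = D^V \circ K$ as anticipated in Remark~\ref{remark:hubercost-interior-condition}, verifying \eqref{eq:k0-condition} and the assumptions via Example~\ref{example:tgv2-1}, and translating \eqref{eq:hubercost-interior-condition-k0} through $(D^V)^*$ and Example~\ref{example:tgv2-marginal} into \eqref{eq:hubercost-interior-condition-tgv2}. Your expanded bookkeeping of the adjoint and chain rule, and your reading of ``$\lambda \in V$'' as the witness produced by $(D^V)^*$ on a subgradient of $\costf_{\costlone}$, fills in details the paper leaves implicit but introduces nothing genuinely different.
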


\begin{proof}
    Analogous to Corollary \ref{corollary:tgv2-l2cost-interior}.
\end{proof}

%%%
\section{A few auxiliary results}\label{sec:auxres}
%%%

We record in this section some general results that will be useful in the proofs of the main results.
These include the coercivity of the functional $\Jhuber{\freevar}$, recorded in Section \ref{sec:coercivity}.
We then discuss some elementary lower semicontinuity facts in Section \ref{sec:lsc}. 
We provide in Section \ref{sec:strict} some new results for passing from strict convergence to strong convergence
%Finally, we record for completeness a relatively standard existence result for the denoising sub-problem \eqref{eq:denoise-numerical-single} in Section \ref{sec:denoising}.

%We will often use the following trivial fact, which we state as a lemma for convenience. \TODO{Since we no longer have spatially dependent parameters, we might want to remove this}

% \begin{lemma}
%     \label{lemma:interior-param}
%     Suppose $(\lambda, \alpha) \in \SPACEparamCompactPosInt$.
%     Then there exists $\hat \epsilon > 0$ such that
%     \[
%         \lambda_i, \alpha_j \ge \hat \epsilon,
%         \quad
%         (i=1,\ldots,\Nphi;\, j=1,\ldots,\NA).
%     \]
% \end{lemma}

%%%
\subsection{Coercivity}
\label{sec:coercivity}
%%%

Observe that
\[
    \regf[\bar\alpha]{\mu_1,\ldots,\mu_\NA}
     =  \sup \left\{
        \sum_{j=1}^\NA \bar \alpha_j\left(\mu_j(\psi_j) - \frac{1}{2\gamma}\norm{\psi_j}_{L^2(\Omega; \R^\DIMdomain)}^2\right)
        \middle|
        \begin{array}{l}
            \psi_j \in C_c^\infty(\Omega;  \R^{m_j} \times \R^\DIMdomain), \\
            \sup_{x \in \Omega} \norm{\psi_j(x)}_2^2 \le 1 
        \end{array}
        \right\}.
\]
Thus
\[
    \regforig{\mu} \ge \regf{\mu} \ge \regforig{\mu} - \frac{C'}{2\gamma} \L^n(\Omega)
\]
for some $C'=C'(\bar \alpha)$.
%It follows that
%\[
%    \regfMARG[\alpha]{f}
%    \ge
%    \regfMARGhuber[\alpha]{f}
%    \ge
%    \regfMARG[\alpha]{f}  - \frac{C'}{2\gamma} \L^n(\Omega),
%\]
Since $\Omega$ is bounded, it follows that given $\delta>0$, for large enough $\gamma>0$ and every $\epsilon\ge0$ holds
\begin{equation}
    \label{eq:j-comparison}
    \Jorig[\alpha]{u} - \delta \le \Jhuber[\alpha]{u} \le \Jepsilon[\alpha]{u} \le \JepsilonXX[\alpha]{u}{0,\epsilon}.
\end{equation}
We will use these properties frequently. Based on the coercivity and norm equivalence properties in Assumption \ref{ass:a-k} and Assumption \ref{ass:phi}, the following proposition states the important fact that $\JhuberSYM$  is coercive with respect to $\norm{\freevar}_{X}'$ and thus also the standard norm of $X$.

\begin{proposition}
    \label{prop:bv-like-bound}
    Suppose Assumption \ref{ass:a-k} and Assumption \ref{ass:phi} hold,
    and that $\alpha \in \SPACEalphaCompactPosInt$.
    Let $\epsilon \ge 0$ and $\gamma \in (0, \infty]$. Then
    %there exists a constant $c=c(\alpha,\gamma)>0$ %and $C=C(\alpha)>0$
    %such that
    \begin{equation}
        \label{eq:u-bv-like-bound}
        \Jepsilon[\alpha]{u} \to +\infty
        \quad\text{as}\quad
        \norm{u}_{X} \to +\infty.
        %c\norm{u}_{X} -1 
        %\le \Jepsilon[\alpha]{u},
        %\quad (u \in X).
    \end{equation}
\end{proposition}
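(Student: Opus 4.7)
The plan is to reduce coercivity to the two facts pre\-established in the preamble to the proposition: the norm equivalence $\norm{\freevar}_X \simeq \norm{\freevar}_X'$ from Assumption \ref{ass:a-k}, and the comparison $\Jhuber[\alpha]{u} \ge \Jorig[\alpha]{u} - \delta$ (for $\gamma$ large; more generally $\huberj{\mu} \ge \radonj{\mu} - \L^\DIMdomain(\Omega)/(2\gamma)$) combined with the coercivity of $\Phi$ on $Y$ from Assumption \ref{ass:phi}. Concretely, I would take any sequence $\{u^i\} \subset X$ with $\norm{u^i}_X \to \infty$ and show $\Jepsilon[\alpha]{u^i} \to +\infty$; by norm equivalence this is the same as the hypothesis $\norm{u^i}_X' = \sum_{j=1}^\NA \radonj{A_j u^i} + \norm{Ku^i}_Y \to \infty$.

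The next step is to bound the three summands of $\Jepsilon[\alpha]{u^i}$ individually. Since $\Smoother: X \to [0,\infty]$, the smoothing term satisfies $\epsilon \Smoother(u^i) \ge 0$. The fidelity $\Phi$ is weak*-lower semicontinuous, convex, proper, and coercive, so (by Banach--Alaoglu applied to the sublevel sets) it attains its infimum at the $f$ provided by Assumption \ref{ass:phi}, giving $m_\Phi \defeq \inf \Phi > -\infty$. Finally, applying the pointwise inequality $\huber[\gamma]{g} \ge \norm{g}_2 - 1/(2\gamma)$ gives, after integration,
\[
    \sum_{j=1}^{\NA} \alpha_j \huberj{A_j u^i}
    \ge
    \alphalower \sum_{j=1}^{\NA} \radonj{A_j u^i} - C_\gamma,
\]
with $\alphalower \defeq \min\{\alpha_j : \alpha_j < \infty\} > 0$ and $C_\gamma \defeq \NA \alphalower \L^\DIMdomain(\Omega)/(2\gamma)$, the case $\gamma=\infty$ being trivial.

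A minor obstacle is the possibility that some $\alpha_j = +\infty$, which I would dispose of as follows. We may assume $\Jepsilon[\alpha]{u^i} < \infty$ along the sequence (else there is nothing to prove); but then $\alpha_j = \infty$ forces $\huberj{A_j u^i} = 0$, hence $\radonj{A_j u^i} = 0$, so the estimate above in fact controls the \emph{full} sum $\sum_{j=1}^\NA \radonj{A_j u^i}$. If all $\alpha_j = \infty$, then all $A_j u^i = 0$, and then $\norm{u^i}_X' \to \infty$ forces $\norm{Ku^i}_Y \to \infty$ directly.

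Combining, $\Jepsilon[\alpha]{u^i} \ge m_\Phi + (\Phi(Ku^i) - m_\Phi) + \alphalower \sum_{j=1}^\NA \radonj{A_j u^i} - C_\gamma$, where all three non-constant terms are nonnegative. To finish, I would split into two (sub)cases along the divergent sequence $\norm{u^i}_X' \to \infty$: if $\sum_j \radonj{A_j u^i}$ is unbounded, the regularisation term drives $\Jepsilon[\alpha]{u^i}$ to $+\infty$ since $\alphalower > 0$; otherwise $\norm{Ku^i}_Y \to \infty$, and the coercivity \eqref{eq:Y-coercive} of $\Phi$ from Assumption \ref{ass:phi} gives $\Phi(Ku^i) \to +\infty$. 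Either way $\Jepsilon[\alpha]{u^i} \to +\infty$, proving \eqref{eq:u-bv-like-bound}.
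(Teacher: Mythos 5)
Your proof is correct and is essentially the contrapositive of the paper's own argument: both reduce the coercivity of $\Jepsilon[\alpha]{\freevar}$ to the coercivity \eqref{eq:Y-coercive} of $\Phi$ from Assumption \ref{ass:phi} together with the norm equivalence of Assumption \ref{ass:a-k}, using the comparison between the Huberised and unregularised Radon norms. If anything, you are more explicit than the paper about the additive Huberisation constant, the lower bound $\inf\Phi>-\infty$, and the case $\alpha_j=\infty$, all of which the paper absorbs implicitly.
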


\begin{proof}
    Let $\{u^i\}_{i=1}^\infty \subset X$, and suppose $\sup_i \Jepsilon[\alpha]{u^i} < \infty$. Then in particular $\sup_i \Phi(Ku^i) < \infty$. By Assumption \ref{ass:phi} then $\sup_i \norm{Ku^i}_Y < \infty$. But Assumption \ref{ass:a-k} says
    \[
        \norm{u^i}_X' = 
        \sum_{j=1}^{\NA} \radonj{A_j u^i} + \norm{Ku^i}_Y
        \le \Jepsilon[\alpha]{u^i} + \norm{Ku^i}_Y.
    \]
    This implies $\sup_i \norm{u^i}_X' < \infty$.
    By the equivalence of norms in Assumption \ref{ass:a-k}, we immediately obtain \eqref{eq:u-bv-like-bound}.
\end{proof}

\begin{comment}
\begin{proof}
    Using Assumption \ref{ass:phi} we obtain
    \[
        \Phi(Ku) \ge C_\Phi \norm{Ku}_Y - 1.
    \]
    We then observe that since $\alpha \in \SPACEalphaCompactPosInt$, then $\alpha_j > \hat \epsilon >0$ for some $\hat\epsilon>0$ and each $j=1,\ldots,\NA$.
    Using Assumption \ref{ass:a-k}, we therefore find for some $C_2 > 0$, dependent on $\hat \epsilon$, $C_\Phi$, and the norms, that
    \[
        %\sum_{i=1}^{\Nphi} \int_\Omega \lambda_i(x)  \phi_i(x, [Ku](x)) \d x 
        %\sum_{i=1}^{\Nphi} \lambda_i \int_\Omega \phi_i(x, [Ku](x)) \d x 
        \Jorig[\alpha]{u}
        =
        \Phi(Ku)
        +
        %\sum_{j=1}^{\NA} \int_\Omega \alpha_j(x) \d \huber[\gamma]{A_j u}(x)
        \sum_{j=1}^{\NA} \alpha_j \huberj{A_j u}
        \ge
        C_2 \norm{u}_{X}' - 1
        \ge
        \min\{C_2, 1+C_1\} \norm{u}_{X}' - 1.
    \]
    By the equivalence of the norms $\norm{\freevar}_X$ and $\norm{\freevar}_X'$ and \eqref{eq:j-comparison}, we immediately obtain \eqref{eq:u-bv-like-bound}.
\end{proof}
\end{comment}

%%%
\subsection{Lower semicontinuity}
\label{sec:lsc}
%%%

We record the following elementary lower semicontinuity facts that we have already used to justify our examples.

\begin{lemma}
    \label{lemma:lsc}
    The following functionals are lower semicontinuous.
    \begin{enumroman}
        \item
            \label{item:lsc-huber}
            $\mu \mapsto \huberradon{\nu-\mu}$ with respect to weak* convergence in $\Meas(\Omega; \R^\DIMkurange)$.
        \item
            \label{item:lsc-ltwo}
            $v \mapsto \norm{f-v}_{\SPACEkuLtwo}^2$ with respect to weak convergence in $\SPACEkuLp{p}$ for any $1<p \le 2$ on a bounded domain $\Omega$.
        \item
            \label{item:lsc-ltwograd}
            $v \mapsto \norm{\grad(f-v)}_{\SPACEgradkuLtwo}^2$ with respect to strong convergence in $\SPACEkuLone$.
    \end{enumroman}
\end{lemma}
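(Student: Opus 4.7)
My approach treats each of the three items separately via standard duality and compactness arguments.

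For \ref{item:lsc-huber}, the plan is to express $\mu \mapsto \huberradon{\nu-\mu}$ as a pointwise supremum of weakly*-continuous affine functionals of $\mu$. Starting from the conjugate formula \eqref{eq:huber-twonorm} together with the Lebesgue decomposition $\mu = f\L^\DIMdomain + \mu^s$, one obtains the representation
\[
    \huberradon{\mu} = \sup\Bigl\{ \int_\Omega \varphi \cdot d\mu - \frac{1}{2\gamma}\int_\Omega \norm{\varphi(x)}_2^2 \, dx \Bigm| \varphi \in C_c(\Omega; \R^\DIMkurange),\ \norm{\varphi(x)}_2 \le 1\ \forall x \in \Omega \Bigr\}.
\]
I would verify this formula by computing the supremum pointwise on the absolutely continuous part (which yields $\huber[\gamma]{f(x)}$ thanks to \eqref{eq:huber-twonorm}), and, since $\mu^s \perp \L^\DIMdomain$, choosing on an arbitrarily large fraction of the support of $\mu^s$ a smooth approximation of the $\Sgn$ of $\mu^s$ while keeping $\varphi$ essentially unchanged on a support of the absolutely continuous part. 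Each map $\mu \mapsto \int \varphi \cdot d\mu - \frac{1}{2\gamma}\int \norm{\varphi}_2^2 \, dx$ is weakly*-continuous on $\Meas(\Omega; \R^\DIMkurange)$, so the supremum is weakly* lower semicontinuous; precomposing with the weakly*-continuous translation $\mu \mapsto \nu - \mu$ preserves this.

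For \ref{item:lsc-ltwo}, the cleanest route is to use the fact that a convex function on a normed space is weakly lower semicontinuous if and only if it is strongly lower semicontinuous (Hahn--Banach). The functional $v \mapsto \norm{f-v}_{L^2}^2$, extended by $+\infty$ outside $L^2$, is convex, so it suffices to show strong $L^p$ lower semicontinuity. Given $v_n \to v$ in $L^p$ with $\liminf_n \norm{f-v_n}_{L^2}^2 < \infty$ (otherwise the inequality is trivial), pass to a subsequence realising the liminf, then to a further a.e.-convergent subsequence, and apply Fatou's lemma.

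For \ref{item:lsc-ltwograd}, the plan is to combine distributional convergence with weak $L^2$ compactness. Given $v_n \to v$ in $L^1$, assume $\liminf_n \norm{\grad(f-v_n)}_{L^2}^2 < \infty$ and extract a subsequence achieving the liminf; it is bounded in $L^2$, hence by reflexivity a further subsequence $\grad(f-v_{n_k})$ converges weakly in $L^2$ to some $g$. Since $v_{n_k} \to v$ in $L^1$ forces $\grad v_{n_k} \to \grad v$ in $\mathcal{D}'(\Omega)$, we identify $g = \grad(f-v)$, and lower semicontinuity of the $L^2$ norm along weakly convergent sequences concludes the argument. The only step requiring any care across the three parts is this last identification in \ref{item:lsc-ltwograd}, where we must match the weak $L^2$ limit of gradients with the distributional gradient of the $L^1$-limit; parts \ref{item:lsc-huber} and \ref{item:lsc-ltwo} are routine once the dualities are in place.
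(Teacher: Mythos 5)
Your proposal is correct. For item \ref{item:lsc-huber} you follow essentially the same route as the paper: both write $\mu \mapsto \huberradon{\nu-\mu}$ as a supremum of weakly* continuous affine functionals of $\mu$, obtained from the pointwise dual form of the Huber function together with the Lebesgue decomposition (your constant $\tfrac{1}{2\gamma}$ in the quadratic penalty is the one consistent with \eqref{eq:huber-twonorm}). For items \ref{item:lsc-ltwo} and \ref{item:lsc-ltwograd} you genuinely diverge: the paper treats all three cases uniformly, writing each functional $G$ as a convex conjugate $G(v)=\sup_\varphi\{\iprod{v}{\varphi}-G^*(\varphi)\}$ and passing to the limit along a supremising sequence $\{\varphi^j\}$, the only case-specific remark being that for \ref{item:lsc-ltwo} the test functions lie in $\SPACEkuLtwo\subset[\SPACEkuLp{p}]^*$ on a bounded domain, so each affine minorant is continuous for the stated convergence. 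You instead prove \ref{item:lsc-ltwo} by combining the Hahn--Banach/Mazur principle (convex and strongly lsc implies weakly lsc) with an a.e.-subsequence and Fatou argument, and \ref{item:lsc-ltwograd} by extracting a weakly convergent subsequence of gradients in $L^2$ and identifying the limit distributionally. The paper's approach buys uniformity and brevity, but it implicitly invokes the biconjugate identity $G=G^{**}$ for each extended functional, which is not spelled out; your arguments for \ref{item:lsc-ltwo} and \ref{item:lsc-ltwograd} are longer but entirely self-contained, and the distributional identification you flag in \ref{item:lsc-ltwograd} is indeed the one step that needs care --- in the paper's version the same point is hidden in the choice of admissible test functions pairing $L^1$ with compactly supported divergences. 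Both proofs are valid.
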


\begin{proof}
    In each case, let $\{v^i\}_{i=1}^\infty$ converge to $v$.
    Denoting by $G$ the involved functional, we write it as a
    convex conjugate, $G(v) = \sup\{\iprod{v}{\varphi}-G^*(\varphi)\}$.
    Taking a supremising sequence $\{\varphi^j\}_{j=1}^\infty$ for
    this functional at any point $v$, we easily see lower semicontinuity
    by considering the sequences $\{\iprod{v^i}{\varphi^j}-G^*(\varphi^j)\}_{i=1}^\infty$
    for each $j$.
    In case \ref{item:lsc-ltwo} we use the fact that $\varphi^j \in \SPACEkuLtwo \subset [\SPACEkuLp{p}]^*$ when $\Omega$ is bounded.
    
    In case \ref{item:lsc-huber}, how exactly we write $G(\mu)=\huberradon{\nu-\mu}$
    as a convex conjugate demands explanation. We first of all recall that for $g \in \R^n$, the Huber-regularised norm may be written in dual form as
    \begin{equation}
        %\label{eq:huber-twonorm}
        \notag
        \huber[\gamma]{g}=
        \sup
        \Bigl\{
            \iprod{q}{g} - \frac{\gamma}{2} \norm{q}_2^2
            \Bigm|
            \norm{q}_2 \le 1
        \Bigr\}.
    \end{equation}
    Therefore, we find that
    \[
        G(\mu)=\sup\left\{
             %\mu(\varphi) - \int_\Omega \frac{\gamma}{2\alpha(x)} \norm{\varphi(x)}_2^2 \d x
             \mu(\varphi) - \int_\Omega \frac{\gamma}{2} \norm{\varphi(x)}_2^2 \d x
             \middle|
                 \varphi \in C_c^\infty(\Omega),\ 
                 %\norm{\varphi(x)}_2 \le \alpha(x) \text{ for every } x \in \Omega
                 \norm{\varphi(x)}_2 \le \alpha \text{ for every } x \in \Omega
             \right\}.
    \]
    This has the required form.
\end{proof}

We also show here that the marginal regularisation functional $\regfMARGhuber{\freevar}$ is weakly* lower semicontinuous on $Y$. Choosing $K$ as in Example \ref{example:tv-1} and Example \ref{example:tgv2-1}, this provides in particular a proof that $\TV$ and $\TGV^2$ are lower semicontinuous with respect to weak convergence in $L^2(\Omega)$ when $\DIMdomain=1,2$.

\begin{lemma}
    \label{lemma:t-lsc}
    Suppose $\bar\alpha \in \SPACEalphaCompactPosInt$, and Assumption \ref{ass:a-k} holds.
    Then $\regfMARGhuber{\freevar}$ is lower semicontinuous with respect to weak* convergence in $Y$, and continuous with respect to strong convergence in $\range{K}$.
\end{lemma}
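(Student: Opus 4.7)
The plan is to prove the two statements separately, starting from lower semicontinuity and then improving it to continuity in the strong-convergence case by a right-inverse construction.

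\textbf{Step 1 (weak$^*$ lower semicontinuity in $Y$).} Take $v^i \weaktostar v$ in $Y$ and, without loss of generality, assume $L \defeq \liminf_i \regfMARGhuber[\bar\alpha]{v^i} < \infty$; pass to a subsequence realising the liminf. By the definition of $\regfMARGhuber{\freevar}$ as an infimum, I would choose for each $i$ an element $\bar v^i \in X$ with $K\bar v^i = v^i$ and
\[
    \regf[\bar\alpha]{A\bar v^i} \le \regfMARGhuber[\bar\alpha]{v^i} + 1/i.
\]
Since $\bar\alpha \in \SPACEalphaPosInt$ gives $\bar\alpha_j > 0$ for each $j$, and since $\abs{\mu}_\gamma(\Omega) \ge \abs{\mu}(\Omega) - \L^n(\Omega)/(2\gamma)$, this bounds $\sum_j \radonj{A_j \bar v^i}$. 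Combined with $\norm{K\bar v^i}_Y = \norm{v^i}_Y$, which is bounded by the uniform boundedness principle applied to the weakly$^*$ convergent sequence, Assumption~\ref{ass:a-k} yields $\sup_i \norm{\bar v^i}_X' < \infty$ and hence a weakly$^*$ convergent subsequence $\bar v^i \weaktostar \bar v$ in $X$. The weak$^*$ continuity of $K$ and of each $A_j$ (as used in Example~\ref{example:tgv2-1}) gives $K\bar v = v$ and $A_j\bar v^i \weaktostar A_j\bar v$. Applying Lemma~\ref{lemma:lsc}\ref{item:lsc-huber} componentwise (with $\nu = 0$) and summing yields
\[
    \regfMARGhuber[\bar\alpha]{v}
    \le \regf[\bar\alpha]{A\bar v}
    \le \liminf_i \regf[\bar\alpha]{A\bar v^i}
    = L.
\]

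\textbf{Step 2 (strong continuity on $\range{K}$).} Lower semicontinuity follows from Step~1 since strong convergence in $Y$ implies weak$^*$ convergence. For the reverse inequality, let $v^i \to v$ strongly in $\range{K}$ and fix an arbitrary $\bar v \in X$ with $K\bar v = v$. The idea is to use the bounded right-inverse from Assumption~\ref{ass:a-k} to produce competitors for $v^i$: set
\[
    \bar v^i \defeq \bar v + \invK(v^i - v),
\]
so that $K\bar v^i = v^i$ and $\bar v^i \to \bar v$ strongly in $X$ by boundedness of $\invK$. Linearity and boundedness of each $A_j$ then give $A_j\bar v^i \to A_j\bar v$ strongly in $\SPACEj$. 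A short calculation using the dual representation \eqref{eq:huber-twonorm} shows that $\mu \mapsto \huberradon{\mu}$ is $1$-Lipschitz with respect to the Radon norm, and hence strongly continuous. Consequently
\[
    \limsup_i \regfMARGhuber[\bar\alpha]{v^i}
    \le \lim_i \regf[\bar\alpha]{A\bar v^i}
    = \regf[\bar\alpha]{A\bar v}.
\]
Taking the infimum over all $\bar v \in \inv K v$ yields $\limsup_i \regfMARGhuber[\bar\alpha]{v^i} \le \regfMARGhuber[\bar\alpha]{v}$, which combined with the lower bound from Step~1 gives continuity.

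\textbf{Main obstacle.} The delicate point is Step~1: one must extract a weakly$^*$ convergent subsequence of the almost-minimisers $\bar v^i$ in $X$ and transfer convergence through $K$ and the $A_j$. This is exactly what is guaranteed by the interior condition $\bar\alpha \in \SPACEalphaPosInt$ (giving coercive control of $\sum_j \radonj{A_j \bar v^i}$) together with the norm equivalence and weak$^*$ compactness built into Assumption~\ref{ass:a-k}. Step~2 is then essentially a soft continuity argument exploiting the bounded right-inverse $\invK$.
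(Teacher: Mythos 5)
Your proof is correct. Step 1 follows the paper's own argument for weak* lower semicontinuity almost verbatim: almost-minimising liftings $\bar v^i \in \inv K v^i$, a bound on $\norm{\bar v^i}_X'$ from Banach--Steinhaus together with the norm equivalence in Assumption \ref{ass:a-k}, extraction of a weak* convergent subsequence, and Lemma \ref{lemma:lsc}; your quantitative error $1/i$ versus the paper's fixed-but-arbitrary $\epsilon$ is immaterial. (Both you and the paper pass from $\bar v^i \weaktostar \bar v$ to $K \bar v = v$ and to lower semicontinuity of $\regf[\bar\alpha]{A\freevar}$, which tacitly uses weak*-to-weak* continuity of $K$ and the $A_j$ rather than mere boundedness; this is verified in the concrete examples, so you are no worse off than the paper here.) Step 2 is where you genuinely diverge: the paper simply observes that $\regfMARGhuber{\freevar}$ restricted to $\range{K}$ is finite-valued and convex and invokes the classical continuity result \cite[Lemma I.2.1]{ekeland1999convex}, whereas you build explicit competitors $\bar v + \invK(v^i - v)$ and exploit the Lipschitz continuity of the Huberised Radon norm under the bounded operators $A_j$. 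Your route is longer but constructive and self-contained; it makes visible exactly where the bounded right-inverse $\invK$ enters, and it avoids having to justify the convexity-implies-continuity step on the subspace $\range{K}$, which need not be closed in $Y$ and hence is not obviously covered by the cited lemma. The paper's route is shorter and, where it applies, yields local Lipschitz continuity for free. Both arguments implicitly take the components of $\bar\alpha$ finite in the continuity part, as does every application of the lemma in the paper, so this is not a defect of yours specifically.
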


\begin{proof}
    Let $v^k \weaktostar v$ weakly* in Y. By the Banach-Steinhaus theorem, the sequence is bounded in $Y$. From the definition
    \[
        \regfMARGhuber{v} \defeq \inf_{\bar v \in \inv Kv} \regf[\bar\alpha]{A\bar v}.
    \]
    Therefore, if we pick $\epsilon>0$ and $\bar v^k \in \inv K v^k$ such that
    \[
        \regf[\bar\alpha]{A\bar v^k} \le \regfMARGhuber[\bar\alpha]{v^k} +\epsilon,
    \]
    then referral to Assumption \ref{ass:a-k}, yields for some constant $c>0$ the bound
    \[
        c \norm{\bar v^k}_X 
        \le \norm{v^k} + \regf[\bar\alpha]{A\bar v^k}
        \le \norm{v^k} + \regfMARGhuber[\bar\alpha]{v^k} + \epsilon.
    \]
    Without loss of generality, we may assume that
    \[
        \liminf_{k \to \infty} \regfMARGhuber[\bar\alpha]{v^k} < \infty,
    \]
    because otherwise there is nothing to prove.
    Then $\{\bar v^k\}_{k=1}^\infty$ is bounded in $X$, and therefore admits a weakly* convergent subsequence. Let $\bar v$ be the limit of this, unrelabelled, sequence. Since $K$ is continuous, we find that $K \bar v = v$. 
    But $\regf[\bar\alpha]{\freevar}$ is clearly weak* lower semicontinuous in $X$; see Lemma \ref{lemma:lsc}.  Thus
    \[
        \regfMARGhuber[\bar\alpha]{v}
        \le
        \regf[\bar\alpha]{A\bar v} 
        \le
        \liminf_{k \to \infty} \regf[\bar\alpha]{A\bar v^k} \le \liminf_{k \to \infty} \regfMARGhuber{v^k} +\epsilon.
    \]
    Since $\epsilon>0$ was arbitrary, this proves weak* lower semicontinuity.

    The see continuity with respect to strong convergence in $\range{K}$, we observe that
    if $v = K u \in \range{K}$, then by the boundedness of the operators $\{A_j\}_{j=1}^\NA$ we get
    \[
        \regfMARGhuber[\bar\alpha]{v} \le \regf[\bar\alpha]{u} \le \regforig[\bar\alpha]{u} \le C \norm{u},
    \]
    for some constant $C>0$.
    So we know that $\regfMARGhuber[\bar\alpha]{\freevar}|\range{K}$ is finite-valued and convex. Therefore it is continuous \cite[Lemma I.2.1]{ekeland1999convex}.
\end{proof}

%%%
\subsection{From $\Phi$-strict to strong convergence}
\label{sec:strict}
%%%

In Proposition \ref{prop:conditions-onealpha}, forming part of the proof of our main theorems, we will need to pass from ``$\Phi$-strict convergence'' of $Ku^k$ to $v$ to strong convergence, using the following lemmas. The former means that $\Phi(Ku^k) \to \Phi(v)$ and $Ku^k \weakto v$ weakly* in $Y$. By strong convexity in a Banach space $Y$, we mean the existence of $\gamma>0$ such that for every $y \in Y$ and $z \in \subdiff \Phi(y) \subset Y^*$ holds
\[
    \Phi(y')-\Phi(y) \ge \dprod{z}{y'-y} + \frac{\gamma}{2}\norm{y'-y}_Y^2,
    \quad (y' \in Y),
\]
where $\dprod{z}{y}$ denotes the dual product, and the subdifferential $\subdiff \Phi(y)$ is defined by $z$ satisfying the same expression with $\gamma=0$.
With regard to more advanced strict convergence results, we point the reader to \cite{delladio1991lower,rindler2013strictly,kristensen2010relaxation}.

\begin{lemma}
    \label{lemma:strict-to-strong}
    Suppose $Y$ is a Banach space, and $\Phi: Y \to (-\infty, \infty]$ strongly convex. If $v^k \weakto \hat v \in \Dom \subdiff \Phi$ weakly* in $Y$ and $\Phi(v^k) \to \Phi(\hat v)$, then $v^k \to \hat v$ strongly in $Y$.
\end{lemma}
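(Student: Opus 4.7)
The strategy is a midpoint argument using the weak$^*$ lower semicontinuity of $\Phi$ supplied by Assumption~\ref{ass:phi}, rather than the direct subgradient inequality (for which $z \in \subdiff\Phi(\hat v) \subset Y^*$ need not pair weakly$^*$-continuously with $v^k - \hat v$, since $z$ may lie outside the canonical image of the predual $Y_*$ in $Y^*$).

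First I would re-express the paper's subgradient form of strong convexity in the equivalent midpoint form
\[
    \Phi\!\left(\frac{y+y'}{2}\right) + \frac{\gamma}{8}\norm{y-y'}_Y^2 \le \frac{1}{2}\Phi(y) + \frac{1}{2}\Phi(y'),
    \quad (y, y' \in Y).
\]
To derive this for given $y, y'$ from the subgradient version, pick $z' \in \subdiff\Phi(y_t)$ at the midpoint $y_t = (y+y')/2$, apply the subgradient inequality from $y_t$ to both $y$ and $y'$, and average with weights $1/2$ each: the $z'$-terms cancel and the quadratic remainders combine into $\frac{\gamma}{8}\norm{y-y'}_Y^2$. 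In the boundary cases where the midpoint subgradient might be empty, the inequality is recovered by approximating with interior points of $\Dom\Phi$, anchored by the hypothesis $\hat v \in \Dom\subdiff\Phi \subset \Dom\Phi$, which guarantees finite $\Phi$-values near the endpoints.

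Second, I would apply this at $y = v^k$, $y' = \hat v$ to obtain
\[
    \frac{\gamma}{8}\norm{v^k - \hat v}_Y^2 \le \frac{1}{2}\Phi(v^k) + \frac{1}{2}\Phi(\hat v) - \Phi\!\left(\frac{v^k+\hat v}{2}\right).
\]
Since $v^k \weakto \hat v$ weakly$^*$ in $Y$, the midpoints $(v^k+\hat v)/2$ also weak$^*$ converge to $\hat v$, so weak$^*$ lower semicontinuity of $\Phi$ yields $\liminf_k \Phi((v^k+\hat v)/2) \ge \Phi(\hat v)$. Combined with the hypothesis $\Phi(v^k) \to \Phi(\hat v)$, this forces
\[
    \limsup_{k\to\infty} \frac{\gamma}{8}\norm{v^k-\hat v}_Y^2 \le \Phi(\hat v) - \Phi(\hat v) = 0,
\]
and hence $v^k \to \hat v$ strongly in $Y$.

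The only delicate step is justifying the midpoint form of strong convexity in full generality from the subgradient form stated in the paper. In the concrete settings of interest (where $\Phi$ is a squared Hilbert-space norm), the midpoint form is transparent and no approximation is needed; the abstract bookkeeping to pass from subgradient to midpoint form in the worst case is the spot requiring the most technical care.
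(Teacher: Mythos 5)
Your proof is correct in substance but takes a genuinely different route from the paper's. The paper's own argument is a one-liner: pick $z \in \subdiff\Phi(\hat v)$, write the strong-convexity inequality $\Phi(v^k)-\Phi(\hat v) \ge \dprod{z}{v^k-\hat v} + \frac{\gamma}{2}\norm{v^k-\hat v}_Y^2$ at the limit point, and pass to the limit using $\Phi(v^k)\to\Phi(\hat v)$ together with the vanishing of the pairing term. Your midpoint argument replaces the linear term $\dprod{z}{v^k-\hat v}$ by the midpoint value $\Phi((v^k+\hat v)/2)$, which you control by weak* lower semicontinuity rather than by weak* convergence of the pairing. You correctly identify the soft spot this avoids: $z$ lives in $Y^*$ while weak* convergence only tests against the predual $Y_*$, so $\dprod{z}{v^k-\hat v}\to 0$ is not automatic at the stated level of generality (it is automatic in the Hilbert and reflexive settings where the lemma is actually applied).

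The price you pay is that your proof imports two properties that are not hypotheses of the lemma as stated. First, weak* lower semicontinuity of $\Phi$: the lemma assumes only that $Y$ is Banach and $\Phi$ strongly convex, and a convex function need not be weak* lower semicontinuous; you must invoke Assumption \ref{ass:phi}, which does hold in every application (Lemmas \ref{lemma:phi-to-zero} and \ref{lemma:strong-approximation}) but strictly changes the statement being proved. Second, the midpoint form of strong convexity: your derivation requires $\subdiff\Phi((v^k+\hat v)/2)\ne\emptyset$, and the fallback by approximation needs at least lower semicontinuity (for Br{\o}ndsted--Rockafellar density of $\Dom\subdiff\Phi$) plus continuity bookkeeping along the approximating points, which you flag but do not carry out. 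Neither caveat is fatal in context, but if one wants the lemma exactly as stated, the shorter repair is the paper's subgradient argument with the added stipulation that $z$ pair weak*-continuously (e.g.\ $z$ in the canonical image of $Y_*$, or $Y$ reflexive).
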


\begin{remark}
By standard convex analysis \cite{ekeland1999convex}, $v \in \Dom \subdiff \Phi$ if $\Phi$ has a finite-valued point of continuity and $v \in \interior \Dom \Phi$.
\end{remark}

\begin{proof}
    We first of all note that $-\infty < \Phi(\hat v) < \infty$ because $v \in \Dom \subdiff\Phi$ implies $v \in \Dom \Phi$.
    Let us pick $z \in \subdiff \Phi(\hat v)$. From the strong convexity of $\Phi$, for some $\gamma>0$ then
    \[
        \Phi(v^k) - \Phi(\hat v) \ge \dprod{z}{v^k-\hat v} + \frac{\gamma}{2}\norm{v^k-\hat v}_Y^2.
    \]
    Taking the limit infimum, we observe
    \[
         0
         =\Phi(\hat v) - \Phi(\hat v) d
         \ge \liminf_{k \to \infty} \frac{\gamma}{2}\norm{v^k-\hat v}_Y^2.
    \]
    This proves strong convergence.
\end{proof}

We now use the lemma to show strong convergence of minimising sequences.

\begin{lemma}
    \label{lemma:phi-to-zero}
    Suppose $\Phi$ is strongly convex, satisfies Assumption \ref{ass:phi}, and that $C \subset Y$ is non-empty, closed, and convex with $\interior C \isect \Dom \Phi \ne \emptyset$.
    Let 
    \[
        \hat v \defeq \argmin_{v \in C} \Phi(v).
    \]
    If $\{v^k\}_{k=1}^\infty \subset Y$ with $\lim_{k \to \infty} \Phi(v^k) = \Phi(\hat v)$, then $v^k \to \hat v$ strongly in $Y$.
\end{lemma}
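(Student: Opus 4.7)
The plan is to combine coercivity of $\Phi$, weak* compactness of bounded sets in $Y$, weak* lower semicontinuity, and the preceding Lemma~\ref{lemma:strict-to-strong}. First I observe that by strong convexity the minimiser $\hat v$ of $\Phi$ on $C$ is unique, and that the qualification $\interior C \isect \Dom \Phi \ne \emptyset$ allows the convex-analysis sum rule
\[
    0 \in \subdiff(\Phi + \iota_C)(\hat v) = \subdiff \Phi(\hat v) + N_C(\hat v),
\]
where $\iota_C$ is the indicator of $C$ and $N_C$ is the normal cone. In particular $\subdiff \Phi(\hat v) \ne \emptyset$, so $\hat v \in \Dom \subdiff \Phi$, which will be needed to invoke Lemma~\ref{lemma:strict-to-strong}.

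Next I establish compactness. Since $\Phi(v^k) \to \Phi(\hat v) < \infty$ the values $\Phi(v^k)$ are eventually bounded, and coercivity of $\Phi$ from Assumption~\ref{ass:phi} then gives a uniform bound on $\norm{v^k}_Y$. Because $Y = (Y_*)^*$, Banach--Alaoglu yields a subsequence $v^{k_j} \weaktostar \tilde v$ for some $\tilde v \in Y$. Reading the statement with the implicit understanding (as arises in its application) that $v^k \in C$, the closedness and convexity of $C$ entail weak* sequential closedness, hence $\tilde v \in C$. Weak* lower semicontinuity of $\Phi$ then gives
\[
    \Phi(\tilde v) \le \liminf_{j \to \infty} \Phi(v^{k_j}) = \Phi(\hat v),
\]
and uniqueness of the minimiser forces $\tilde v = \hat v$. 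A standard subsubsequence argument (every subsequence of $\{v^k\}$ has a further subsequence with the same weak* limit $\hat v$) upgrades this to $v^k \weaktostar \hat v$ for the entire sequence.

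Finally I apply Lemma~\ref{lemma:strict-to-strong}: its hypotheses are now verified, since $v^k \weaktostar \hat v$ weakly* in $Y$, $\Phi(v^k) \to \Phi(\hat v)$, and $\hat v \in \Dom \subdiff \Phi$ by the first paragraph. The conclusion is $v^k \to \hat v$ strongly in $Y$, as required.

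The main obstacle is ensuring the limit $\tilde v$ actually lies in $C$ and that $\hat v$ has nonempty subdifferential; both issues are resolved precisely by the qualification condition $\interior C \isect \Dom \Phi \ne \emptyset$ together with the convexity/closedness of $C$. Once those two points are settled, the rest is a routine minimising-sequence argument chained with Lemma~\ref{lemma:strict-to-strong}.
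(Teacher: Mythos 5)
Your proof is correct and follows essentially the same route as the paper's: establish $\hat v \in \Dom\subdiff\Phi$ via the sum rule enabled by $\interior C \isect \Dom\Phi \ne \emptyset$, use coercivity and Banach--Alaoglu to extract a weak* limit, identify it with $\hat v$ by weak* lower semicontinuity and uniqueness, and conclude with Lemma \ref{lemma:strict-to-strong}. You are in fact slightly more careful than the paper in making explicit the subsubsequence upgrade to full-sequence convergence and in flagging the implicit assumption $v^k \in C$, which the paper's proof also uses without stating.
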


\begin{proof}
    By the strict convexity of $\Phi$, implied by strong convexity, and the assumptions on $C$, $\hat v$ is unique and well-defined.
    Moreover $\hat v \in \Dom \subdiff \Phi$. Indeed, our assumptions show the existence of a point $v \in \interior C \isect \Dom \Phi$. The indicator function $\delta_C$ is then continuous at $v$, and so standard subdifferential calculus (see, e.g., \cite[Proposition I.5.6]{ekeland1999convex}) implies that $\subdiff(\Phi+\delta_C)(\hat v)=\subdiff \Phi(\hat v)+\subdiff \delta_C(\hat v)$. But $0 \in \subdiff(\Phi+\delta_C)(\hat v)$ because $\hat v \in \argmin_{v \in Y} \Phi(v)+\delta_C(v)$. This implies that $\subdiff \Phi(\hat v)\ne\emptyset$. Consequently also $\hat v \in \Dom \Phi$, and $\Phi(\hat v) \in \R$

    Using the coercivity of $\Phi$ in Assumption \ref{ass:phi} we then find that $\{v^k\}_{k=1}^\infty$ is bounded in $Y$, at least after moving to an unrelabelled tail of the sequence with $\Phi(v^k) \le \Phi(\hat v)+1$. 
    Since $Y$ is a dual space, the unit ball is weak* compact, and we deduce the existence of a subsequence, unrelabelled, and $v \in Y$ such that $v^k \weakto v$ weakly* in $Y$. 
    By the weak* lower semicontinuity (Assumption \ref{ass:phi}), we deduce
    \[
        \Phi(v) \le \liminf_{k \to \infty} \Phi(v^k) = \Phi(\hat v).
    \]
    Since each $v^k \in C$, and $C$ is closed, also $v \in C$.
    Therefore, by the strict convexity and the definition of $\hat v$, necessarily $v=\hat v$.
    Therefore $v^k \weakto \hat v$ weakly* in $Y$, and $\Phi(v^k) \to \Phi(\hat v)$.
    Lemma \ref{lemma:strict-to-strong} now shows that $v^k \to \hat v$ strongly in $Y$. 
\end{proof}

\begin{comment}
\begin{remark}
    Instead of strong convexity and the coercivity in Assumption \ref{ass:phi}, we could ask for some $p,c>0$ for the bound
    \[
        \Phi(f)-\Phi(\hat v) \ge c \norm{v-\hat v}^p,
        \quad (v \in Y).
    \]
    This applies to any use of these two results, which therefore apply to $\Phi(v)=\norm{v-\hat v}_Y$.
\end{remark}
\end{comment}

\section{Proofs of the main results}
\label{sec:proof}

We now prove the existence, continuity, and non-degeneracy (interior solution) results of Section \ref{sec:main} through a series of lemmas and propositions, starting from general ones that are then specialised to provide the natural conditions presented in Section \ref{sec:main}.

\subsection{Existence and lower semicontinuity under lower bounds}

Our principal tool for proving existence is the following proposition. We will in the rest of this section concentrate on proving the existence of the set $\mathcal{K}$ in the statement. We base this on the natural conditions of Section \ref{sec:main}.

\begin{proposition}[Existence on compact parameter domain]
    \label{prop:existence-compact}
    Suppose Assumption \ref{ass:a-k} and \ref{ass:phi} hold. With $\epsilon \ge 0$ and $\gamma \in (0, \infty]$ fixed, if there exists a compact set $\mathcal{K} \subset \SPACEalphaCompactPosInt$ with
    \begin{equation}
        \label{eq:interior-solution-compact}
        \inf_{\alpha \in \SPACEalphaCompactPos \setminus \mathcal{K}}
            \costf(\costK u_{\alpha,\gamma,\epsilon}) 
        >
        \inf_{\alpha \in \SPACEalphaCompactPos }
            \costf(\costK u_{\alpha,\gamma,\epsilon}),
    \end{equation}
    then there exists a solution $\alpha_{\gamma,\epsilon} \in \SPACEalphaCompactPosInt$ to \eqref{eq:learn-numerical-single}. Moreover, the mapping
    \[
        \ValMap{\epsilon}(\alpha) \defeq \costf(\costK u_{\alpha,\gamma,\epsilon}),
    \]
    is lower semicontinuous within $\SPACEalphaCompactPosInt$.
\end{proposition}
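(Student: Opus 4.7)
The plan is a direct-method argument at both levels of the bilevel problem, with the principal subtlety being that components of the limiting parameter may equal $+\infty$.

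First, pick any minimising sequence $\{\alpha^k\}\subset\SPACEalphaCompactPos$ for \eqref{eq:learn-numerical-single}. The strict gap \eqref{eq:interior-solution-compact} forces $\alpha^k\in\mathcal{K}$ for all $k$ large enough; by compactness of $\mathcal{K}$, pass to a subsequence so that $\alpha^k\to\hat\alpha\in\mathcal{K}\subset\SPACEalphaCompactPosInt$. Since $\mathcal{K}$ avoids the boundary $\{0\}$ coordinatewise, one has $\alpha^k_j\ge\alpha_{\min}>0$ uniformly in $k$, though $\hat\alpha_j=+\infty$ remains possible. Select any inner minimiser $u^k:=u_{\alpha^k,\gamma,\epsilon}$ and test against $u=0\in\Dom\Smoother\cap\Dom\Phi$ (by Assumptions \ref{ass:sol-smoothness} and \ref{ass:phi}, with $A_j 0=0$) to get $\Jepsilon[\alpha^k]{u^k}\le\epsilon\Smoother(0)+\Phi(0)=:C_0$. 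Running the argument of Proposition \ref{prop:bv-like-bound} with the uniform lower bound $\alpha_{\min}$ and the comparison $\huberj{\mu}\ge\radonj{\mu}-C'/(2\gamma)\L^\DIMdomain(\Omega)$ of Section \ref{sec:coercivity} then bounds $\|u^k\|_X$ uniformly in $k$, and Banach--Alaoglu via Assumption \ref{ass:a-k} yields a further subsequence with $u^k\weaktostar\hat u$ in $X$.

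Next, I identify $\hat u$ as a minimiser of $\Jepsilon[\hat\alpha]{\cdot}$. For every coordinate $j$ with $\hat\alpha_j=+\infty$, the uniform bound $\alpha^k_j\huberj{A_j u^k}\le C_0-\inf\Phi$ combined with $\alpha^k_j\to+\infty$ forces $\huberj{A_j u^k}\to 0$; by weak* lower semicontinuity (Lemma \ref{lemma:lsc}\ref{item:lsc-huber}, applied to the weak* continuous operator $A_j$ as verified in the examples of Section \ref{sec:example}), then $A_j\hat u=0$. For any test $u$ with $\Jepsilon[\hat\alpha]{u}<\infty$ --- which necessarily has $A_j u=0$ whenever $\hat\alpha_j=+\infty$, so every ``$\infty\cdot 0$'' contribution vanishes --- scalar continuity yields $\Jepsilon[\alpha^k]{u}\to\Jepsilon[\hat\alpha]{u}$. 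Meanwhile, weak* lower semicontinuity of $\epsilon\Smoother$, of $\Phi\circ K$, and of each term $\alpha_j\huberj{A_j\cdot}$ (with the vanishing property just established handling the $+\infty$ components for free) gives $\Jepsilon[\hat\alpha]{\hat u}\le\liminf_k\Jepsilon[\alpha^k]{u^k}$. Passing to the limit in the inequality $\Jepsilon[\alpha^k]{u^k}\le\Jepsilon[\alpha^k]{u}$ then allows me to take $\hat u$ as $u_{\hat\alpha,\gamma,\epsilon}$.

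Finally, the weak* lower semicontinuity of $\costf$ supplies
\[
\costf(\costK\hat u)\le\liminf_k\costf(\costK u^k)=\inf_{\alpha\in\SPACEalphaCompactPos}\costf(\costK u_{\alpha,\gamma,\epsilon}),
\]
so $\hat\alpha\in\SPACEalphaCompactPosInt$ is a solution of \eqref{eq:learn-numerical-single}. The very same argument, started from an arbitrary convergent sequence $\alpha^k\to\hat\alpha$ in $\SPACEalphaCompactPosInt$ rather than a minimising sequence (the lower bound $\alpha_{\min}$ now supplied directly by coordinatewise convergence in $(0,\infty]^\NA$), shows $\ValMap{\epsilon}(\hat\alpha)\le\liminf_k\ValMap{\epsilon}(\alpha^k)$, establishing lower semicontinuity of $\ValMap{\epsilon}$ on $\SPACEalphaCompactPosInt$. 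The main obstacle throughout is the bookkeeping at $\hat\alpha_j=+\infty$: one must simultaneously drive $u^k$ into $\ker A_j$ along those coordinates and restrict admissible tests for $\hat\alpha$ to the same subspace, so that the inner energies $\Jepsilon[\alpha^k]{\cdot}$ interact compatibly with both the weak* liminf and the pointwise limit in $\alpha^k$; everything else reduces to the direct method via Proposition \ref{prop:bv-like-bound} and the lower semicontinuity facts of Section \ref{sec:lsc}.
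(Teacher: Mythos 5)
Your argument is correct and is essentially the paper's own proof unpacked: the paper disposes of both claims by invoking Lemma \ref{lemma:strong-approximation} with $\gamma^k=\gamma$ and $\epsilon^k=\epsilon$ fixed together with the weak* lower semicontinuity of $\costf$, and the proof of that lemma (via Lemma \ref{lemma:jhuber-lsc}) contains exactly your ingredients --- coercivity through Proposition \ref{prop:bv-like-bound}, weak* compactness from Assumption \ref{ass:a-k}, term-by-term lower semicontinuity, and the special bookkeeping at $\hat\alpha_j=+\infty$. The only cosmetic difference is that Lemma \ref{lemma:jhuber-lsc} treats the infinite components by monotone approximation $\beta_j^\ell\upto\alpha_j$, whereas you argue (equivalently, and arguably more directly for fixed $\gamma,\epsilon$) that those terms are forced to vanish on both sides of the liminf inequality.
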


%The technical proof of the proposition may be found in
%Appendix \ref{appendix:prop:existence-compact}.

The proof depends on the following two lemmas that will be useful later on as well.

\begin{lemma}[Lower semicontinuity of the fidelity with varying parameters]
    \label{lemma:jhuber-lsc}
    Suppose Assumption \ref{ass:a-k}, \ref{ass:phi}, and \ref{ass:sol-smoothness} hold.
    Let $u^k \weaktostar u$ weakly* in $X$, and $(\alpha^k, \gamma^k, \epsilon^k) \to (\alpha, \gamma, \epsilon) \in \SPACEalphaCompactPosInt \times (0, \infty] \times (0, \infty)$. Then
    \[
        \JepsilonXX[\alpha]{u}{\gamma, \epsilon}
        \le
        \liminf_{k \to \infty} \JepsilonXX[\alpha^k]{u^k}{\gamma^k, \epsilon^k}.
    \]
\end{lemma}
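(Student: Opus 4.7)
The plan is to decompose
\[
\JepsilonXX[\alpha]{u}{\gamma,\epsilon} = \epsilon\Smoother(u) + \Phi(Ku) + \sum_{j=1}^{\NA}\alpha_j\huberjX[j]{A_j u}{\gamma},
\]
and to establish, summand by summand, weak* lower semicontinuity under the joint convergence $u^k\weaktostar u$ together with $(\alpha^k,\gamma^k,\epsilon^k)\to(\alpha,\gamma,\epsilon)$. Additivity of $\liminf$ will then combine the three bounds into the conclusion.

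For the smoothing term I would use that $\Smoother\ge 0$ is weak* lower semicontinuous by Assumption \ref{ass:sol-smoothness}, together with $\epsilon^k\to\epsilon>0$: along a subsequence $k_j$ realising $L\defeq\liminf_k \epsilon^k\Smoother(u^k)$, dividing by $\epsilon^{k_j}\to\epsilon>0$ produces $\Smoother(u^{k_j})\to L/\epsilon$, and lower semicontinuity of $\Smoother$ forces $\epsilon\Smoother(u)\le L$. For the fidelity term I would invoke the weak*-to-weak* continuity of $K$, which is built into the dual-space framework of Assumption \ref{ass:a-k} and explicitly verified in each concrete case of Section \ref{sec:example}, to obtain $Ku^k\weaktostar Ku$ in $Y$; the weak* lower semicontinuity of $\Phi$ from Assumption \ref{ass:phi} then yields $\Phi(Ku)\le\liminf_k\Phi(Ku^k)$.

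The real work lies in the Huber regularisers. I would start from the dual form coming from \eqref{eq:huber-twonorm}, namely
\[
\alpha\huberjX[j]{\mu}{\gamma}=\sup\Bigl\{\mu(\varphi)-\frac{1}{2\gamma\alpha}\int_\Omega\norm{\varphi(x)}_2^2\,dx \Bigm| \varphi\in C_c^\infty(\Omega;\R^{m_j}),\ \norm{\varphi}_\infty\le\alpha\Bigr\},
\]
with the convention that the quadratic penalty is read as $0$ when $\gamma=\infty$. For any admissible $\varphi$ relative to the limit parameter $\alpha_j$, I would rescale to $\varphi_k\defeq(\alpha_j^k/\alpha_j)\varphi$, which is admissible for $\alpha_j^k$. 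Weak*-to-weak* continuity of $A_j$ then yields $(A_ju^k)(\varphi_k)\to(A_ju)(\varphi)$, while $\tfrac{1}{2\gamma^k\alpha_j^k}\int\norm{\varphi_k}_2^2\,dx$ converges by elementary continuity to $\tfrac{1}{2\gamma\alpha_j}\int\norm{\varphi}_2^2\,dx$. Passing to $\liminf_k$ and then supremising over $\varphi$ produces the bound $\liminf_k\alpha_j^k\huberjX[j]{A_ju^k}{\gamma^k}\ge\alpha_j\huberjX[j]{A_ju}{\gamma}$.

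The main obstacle is the simultaneous variation of $\alpha^k$ and $\gamma^k$ inside the Huber norm, which is precisely why the strict positivity $\alpha_j>0$ guaranteed by $\alpha\in\SPACEalphaCompactPosInt$ is crucial: it permits both the rescaling $\varphi\mapsto\varphi_k$ and the division by $\gamma^k\alpha_j^k$. The case $\gamma=\infty$ causes no difficulty because $1/(\gamma^k\alpha_j^k)\to 0$ and the dual formula collapses to the standard characterisation of the Radon norm. Summing the $\NA$ Huber estimates together with the bounds for $\epsilon^k\Smoother(u^k)$ and $\Phi(Ku^k)$ then delivers the asserted lower semicontinuity of $\JepsilonXX[\alpha]{u}{\gamma,\epsilon}$.
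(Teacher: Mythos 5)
Your decomposition and your treatment of the smoothing and fidelity terms are sound, and for the Huber terms you take a genuinely different route from the paper. The paper first assumes, without loss of generality, a uniform bound $\limsup_k \JepsilonXX[\alpha^k]{u^k}{\gamma^k,\epsilon^k}<\infty$, deduces from it that the sequences $\{\huberjX{A_j u^k}{\gamma^k}\}_{k}$ are bounded, and then splits the error into a $\gamma$-perturbation (estimated by $C'\abs{\inv\gamma-\inv{(\gamma^k)}}$ via the dual formulation) and an $\alpha$-perturbation (estimated by $\abs{\alpha_j^k-\alpha_j}$ times the bounded Huber norms), before applying the fixed-parameter lower semicontinuity of Lemma \ref{lemma:lsc}. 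You instead push the varying parameters directly into the dual formulation by rescaling the test functions, take $\liminf_k$ for each fixed $\varphi$, and only then supremise; this avoids the a priori boundedness step altogether and is, for finite $\alpha_j$, arguably cleaner. Both arguments rest on the weak*-to-weak* continuity of the $A_j$, which Assumption \ref{ass:a-k} does not literally state but which the paper verifies in its examples and uses in its own proof, so you are on the same footing there. Your handling of $\epsilon^k\Smoother(u^k)$ by dividing along a realising subsequence is also a legitimate simplification given that the statement restricts $\epsilon>0$.

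There is, however, one concrete gap: the lemma allows $\alpha\in\SPACEalphaCompactPosInt=\SPACEalphaRCInt$, so components $\alpha_j=+\infty$ are admitted, and your rescaling $\varphi_k=(\alpha_j^k/\alpha_j)\varphi$ degenerates to $\varphi_k=0$ in that case, while the constraint $\norm{\varphi}_\infty\le\alpha_j$ and the penalty $\tfrac{1}{2\gamma\alpha_j}\int\norm{\varphi}_2^2\,dx$ both trivialise. You stress that $\alpha_j>0$ is what legitimises the rescaling, but you never address $\alpha_j=+\infty$; the paper devotes a separate final paragraph to it, approximating monotonically by finite $\beta_j^\ell\upto\alpha_j$. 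Your approach repairs easily: for $\alpha_j=+\infty$ fix any $M<\infty$ and any $\varphi$ with $\norm{\varphi}_\infty\le M$; for large $k$ one has $\alpha_j^k\ge M$, so $\varphi$ is admissible without rescaling, the penalty $\tfrac{1}{2\gamma^k\alpha_j^k}\int\norm{\varphi}_2^2\,dx$ tends to $0$, and supremising over $M$ and $\varphi$ recovers $\alpha_j\huberjX{A_j u}{\gamma}=\infty\cdot\radonj{A_j u}$ under the convention $\infty\cdot 0=0$. As written, though, this case is missing and the argument as stated fails there.
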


\def\pointPhiKfinite{\invK f}

\begin{proof}
    Let $\hat \epsilon > 0$ be such that $\alpha_j \ge \hat \epsilon$, ($j=1,\ldots,\NA$).
    We then deduce for large $k$ and some $C=C(\Omega, \gamma)$ that
    \begin{equation}
        \label{eq:jepsilon-hat-epsilon-bound}
        \begin{split}
        \limsup_k \JepsilonXX[\hat \epsilon, \ldots, \hat \epsilon] {u^k}{\gamma,\epsilon^k}
        &
        \le
        \limsup_k \JepsilonXX[\hat \epsilon, \ldots, \hat \epsilon] {u^k}{\gamma^k,\epsilon^k}
        +C
        \\
        &
        \le
        \limsup_k \JepsilonXX[\alpha^k] {u^k}{\gamma^k,\epsilon^k} + C
         < \infty.
        \end{split}
    \end{equation}
    Here we have assumed the final inequality to hold. This comes without loss of generality, because otherwise there is nothing to prove.
    Observe that this holds even if $\{\alpha^k\}_{k=1}^\infty$ is not bounded.
    In particular, if $\epsilon >0$, restricting $k$ to be large, we may assume that
    \begin{equation}
        \label{eq:hilbert-bound}
        C_1 \defeq \Smoother(u^k) < \infty.
    \end{equation}

    We recall that
    \begin{equation}
        \label{eq:jepsilon-recall}
        \Jepsilon[\alpha]{u} :=
            \epsilon\Smoother(u)
            +
            \Phi(Ku)
            +
            \sum_{j=1}^{\NA} \alpha_j \huberj{A_j u}.
    \end{equation}
    We want to show lower semicontinuity of each of the terms in turn.
    We start with the smoothing term.
    If $\epsilon>0$, using \eqref{eq:hilbert-bound}, we write
    \[
        \epsilon^k\Smoother(u^k)
        =
        (\epsilon^k-\epsilon)\Smoother(u^k)
        +
        \epsilon\Smoother(u^k)
        \le
        (\epsilon^k-\epsilon)\frac{C_1}{2}
        +
        \epsilon\Smoother(u^k).
    \]
    By the convergence $\epsilon^k \to \epsilon$, and the weak* lower semicontinuity of $\Smoother$, we find that
    \begin{equation}
        \label{eq:hilbert-lsc}
        \epsilon \Smoother(u)
        \le
        \liminf_{k \to \infty}
        \epsilon^k \Smoother(u^k)
    \end{equation}
    If $\epsilon=0$, we have
    \[
        \epsilon\Smoother(u)=0 \cdot \infty = 0,
    \]
    while still
    \[
        0 \le \sup_k \epsilon^k \Smoother(u^k) < \infty.
    \]
    Thus \eqref{eq:hilbert-lsc} follows.
    
    The fidelity term $\Phi \circ K$ is weak* lower semicontinuous by the continuity of $K$ and the weak* lower semicontuity of $\Phi$.
    It therefore remains to consider the terms in \eqref{eq:jepsilon-recall} involving both the regularisation parameters $\alpha$, as well as the Huberisation parameter $\gamma$. 
    Indeed using the dual formulation \eqref{eq:huber-twonorm} of the Huberised norm, we have
    for some constant $C'=C'(\hat \epsilon, \Omega)$ that
    \[
        \begin{split}
        \huberjX{A_j u^k}{\gamma^k}
        &
        =
        \sup_{\norm{\varphi(x)} \le 1} \int \varphi \d A_j u^k - \frac{1}{2\gamma^k} \norm{\varphi}^2 \d x
        \\
        &
        \ge
        \sup_{\norm{\varphi(x)} \le 1} \int_\Omega \varphi \d A_j u^k + \frac{1}{2\gamma} \norm{\varphi}^2 \d x
        - C' \abs{\inv\gamma-\inv{(\gamma^k)}}
        \\
        &
        =
        \huberjX{A_j u^k}{\gamma}
        - C' \abs{\inv\gamma-\inv{(\gamma^k)}}.
        \end{split}
    \]
    Thus, if $\alpha \in \SPACEalphaPos$, we get
    \[
        \begin{split}
        \alpha^k_j \huberjX{A_j u^k}{\gamma^k}
        &
        =
        \alpha_j \huberjX{A_j u^k}{\gamma^k}
        +
        (\alpha^k_j-\alpha_j) \huberjX{A_j u^k}{\gamma^k}
        \\
        &
        \ge
        \alpha_j\huberjX{A_j u^k}{\gamma^k}
        -
        \abs{\alpha^k_j-\alpha_j} \huberjX{A_j u^k}{\gamma^k}
        \\
        &
        \ge
        \alpha_j \huberjX{A_j u^k}{\gamma}
        - C' \abs{\inv\gamma-\inv{(\gamma^k)}}
        - \abs{\alpha^k_j-\alpha_j} \huberjX{A_j u^k}{\gamma^k}.
        \end{split}
    \]
    It follows from \eqref{eq:jepsilon-hat-epsilon-bound} that the sequence $\{\huberjX{A_j u^k}{\gamma^k}\}_{k=1}^\infty$ is bounded in $\SPACEj$ for each $j=1,\ldots,\NA$. Thus
    \[
        \liminf_{k \to \infty} \alpha^k_j \huberjX{A_j u^k}{\gamma^k}
        \ge
        \liminf_{k \to \infty} \alpha_j \huberjX{A_j u^k}{\gamma}
        \ge
        \alpha_j \huberjX{A_j u}{\gamma},
    \]
    where the final step follows from Lemma \ref{lemma:lsc}.
    
    It remains to consider the case that $\alpha \in \SPACEalphaCompactPos \setminus \SPACEalphaPos$, i.e., when $\alpha_\ell=\infty$ for some $\ell$. We may pick sequences $\{\beta_j^\ell\}_{\ell=1}^\infty$, ($j=1,\ldots,\NA$), such that $\beta_j^\ell \upto \alpha_j$. Further, we may find $\{\beta_j^{k,\ell}\}_{\ell=1}^\infty $ such that $\beta_j^{k,\ell} \le \alpha_j^k$ with $\alpha_j^k = \lim_{\ell \to \infty} \beta_j^{k,\ell}$ and $\beta_j^\ell = \lim_{k \to \infty} \beta_j^{k,\ell}$. Then, by the bounded case studied above
    \[
        \liminf_{k \to \infty} \alpha^k_j\huberjX{A_j u^k}{\gamma^k}
        \ge
        \liminf_{k \to \infty} \beta_j^{k, \ell}\huberjX{A_j u^k}{\gamma^k}
        \ge
        \beta_j^\ell \huberjX{A_j u}{\gamma}.
    \]
    But $\{\alpha^k_j\huberjX{A_j u^k}{\gamma^k}\}_{k=1}^\infty$ is bounded
    by \eqref{eq:jepsilon-hat-epsilon-bound}, and
    \[
        \liminf_{\ell \to \infty} 
        \beta_j^\ell\huberjX{A_j u}{\gamma}
        \ge
        \alpha_j\huberjX{A_j u}{\gamma}.
    \]
    Thus lower semicontinuity follows.
\end{proof}

\begin{lemma}[Convergence of reconstructions away from boundary]
    \label{lemma:strong-approximation}
    Suppose Assumption \ref{ass:a-k}, \ref{ass:phi}, and \ref{ass:sol-smoothness} hold.
    Let $(\alpha^k, \gamma^k, \epsilon^k) \to (\alpha, \gamma, \epsilon)$ in $\SPACEalphaCompactPosInt \times (0, \infty] \times [0, \infty)$. Then we can find
    $u_{\alpha,\gamma,\epsilon} \in \argmin \Jepsilon[\alpha]{\freevar}$ and extract a subsequence satisfying
    \begin{subequations}
    \begin{align}
        \label{eq:strong-approximation-j}
        \JepsilonXX[\alpha^k]{u_{\alpha^k,\gamma^k,\epsilon^k}}{\gamma^k,\epsilon^k}
        &
        \to
        \JepsilonXX[\alpha]{u_{\alpha,\gamma,\epsilon}}{\gamma,\epsilon},
        \\
        \label{eq:strong-approximation-weakstar}
        u_{\alpha^k,\gamma^k,\epsilon^k} &
        \weaktostar
        u_{\alpha,\gamma,\epsilon}
        \quad
        \text{weakly* } X, \quad\text{and}
        \\
        \label{eq:strong-approximation-kstrong}
        Ku_{\alpha^k,\gamma^k,\epsilon^k}
        &
        \to
        Ku_{\alpha,\gamma,\epsilon}
        \quad
        \text{strongly in } Y.
    \end{align}
    \end{subequations}
\end{lemma}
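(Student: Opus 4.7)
The plan is a $\Gamma$-convergence style argument. Write $u^k \defeq u_{\alpha^k,\gamma^k,\epsilon^k}$. First I would test against $u^* = 0$: linearity of each $A_j$ gives $A_j 0 = 0$, so every regularisation term vanishes (including when $\alpha_j^k \to \infty$, by the convention $\infty \cdot 0 = 0$), while Assumption \ref{ass:phi} together with Assumption \ref{ass:sol-smoothness} yields the uniform bound $\JepsilonXX[\alpha^k]{0}{\gamma^k,\epsilon^k} = \epsilon^k \Smoother(0) + \Phi(0) = O(1)$. By minimality the same bound holds for $\JepsilonXX[\alpha^k]{u^k}{\gamma^k,\epsilon^k}$. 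Since $\alpha^k \to \alpha \in \SPACEalphaCompactPosInt$ we have $\alpha_j^k \ge \hat\epsilon > 0$ for large $k$, and for the same $k$, $\gamma^k \ge \gamma/2 > 0$, so the computation $\huberjX{\mu}{\gamma^k} \ge \radonj{\mu} - C\L^\DIMdomain(\Omega)/(2\gamma^k)$ together with the coercivity of $\Phi$ from Assumption \ref{ass:phi} gives, uniformly in $k$, $\sup_k\norm{u^k}_X' < \infty$, just as in Proposition \ref{prop:bv-like-bound}. Assumption \ref{ass:a-k} then delivers an unrelabelled subsequence with $u^k \weaktostar \bar u$ in $X$, which is \eqref{eq:strong-approximation-weakstar}.

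\textbf{Minimality and convergence of the objective.} The liminf inequality $\Jepsilon[\alpha]{\bar u} \le \liminf_k \JepsilonXX[\alpha^k]{u^k}{\gamma^k,\epsilon^k}$ is provided by Lemma \ref{lemma:jhuber-lsc}. For the recovery direction, fix any $v \in X$ with $\Jepsilon[\alpha]{v} < \infty$. When $\epsilon > 0$, finiteness forces $\Smoother(v) < \infty$ and one can take $v^\delta = v$, $\delta = 0$; when $\epsilon = 0$ one invokes Assumption \ref{ass:sol-smoothness} to obtain $v^\delta \in X$ with $\Smoother(v^\delta) < \infty$ and $\Jorig[\alpha]{v^\delta} \le \Jorig[\alpha]{v} + \delta$. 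Note that $\Jepsilon[\alpha]{v} < \infty$ forces $A_j v = 0$ at every index with $\alpha_j = \infty$, and the bound $\Jorig[\alpha]{v^\delta} \le \Jorig[\alpha]{v} + \delta$ then forces $A_j v^\delta = 0$ at those same indices as well. Term-by-term continuity in $(\alpha^k, \gamma^k, \epsilon^k)$ and the dominated nature of each of the three summands at $v^\delta$ (each summand being finite) yields
\[
    \limsup_k \JepsilonXX[\alpha^k]{v^\delta}{\gamma^k,\epsilon^k}
    \le \Jepsilon[\alpha]{v^\delta}
    \le \epsilon \Smoother(v^\delta) + \Jorig[\alpha]{v} + \delta.
\]
Chaining the liminf inequality with minimality $\JepsilonXX[\alpha^k]{u^k}{\gamma^k,\epsilon^k} \le \JepsilonXX[\alpha^k]{v^\delta}{\gamma^k,\epsilon^k}$ and sending $\delta \downto 0$ gives $\Jepsilon[\alpha]{\bar u} \le \Jepsilon[\alpha]{v}$, so $\bar u$ minimises $\Jepsilon[\alpha]{\freevar}$; set $u_{\alpha,\gamma,\epsilon} \defeq \bar u$. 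Substituting $v = \bar u$ collapses every inequality in the chain to equality, producing \eqref{eq:strong-approximation-j}.

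\textbf{Strong convergence of $Ku^k$.} The three summands $\epsilon^k \Smoother(\freevar)$, $\Phi \circ K$, and $\regfX[\alpha^k]{A\freevar}{\gamma^k}$ are each individually weakly* lower semicontinuous along the sequence $u^k$ (as explicitly tracked within the proof of Lemma \ref{lemma:jhuber-lsc}), and their sum converges to the sum of lower limits; hence each summand must converge individually. In particular $\Phi(Ku^k) \to \Phi(K\bar u)$, while already $Ku^k \weaktostar K\bar u$ in $Y$. If $K$ is compact, weak* convergence of $u^k$ in $X$ directly gives \eqref{eq:strong-approximation-kstrong}; otherwise Assumption \ref{ass:phi} guarantees $\Phi$ continuous and strongly convex, so $K \bar u \in \Dom \subdiff \Phi$ by standard subdifferential calculus, and Lemma \ref{lemma:strict-to-strong} delivers $Ku^k \to K\bar u$ strongly in $Y$. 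The subtle point I expect to require most care is the recovery construction in the second step in the boundary regime where $\alpha_j = \infty$ and $\epsilon = 0$ simultaneously: this is precisely the regime that Assumption \ref{ass:sol-smoothness}, combined with the finiteness compatibility $A_j v^\delta = 0$, is engineered to handle.
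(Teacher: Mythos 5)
Your proof is correct and follows essentially the same route as the paper's: the liminf inequality from Lemma \ref{lemma:jhuber-lsc}, the recovery/limsup inequality from Assumption \ref{ass:sol-smoothness} applied to a near-minimiser with $A_j v^\delta = 0$ wherever $\alpha_j = \infty$, compactness from Proposition \ref{prop:bv-like-bound}, and the passage from $\Phi(Ku^k)\to\Phi(Ku_{\alpha,\gamma,\epsilon})$ to strong convergence via compactness of $K$ or Lemma \ref{lemma:strict-to-strong}. The only differences are cosmetic: you extract the weak* subsequence first and identify its limit as a minimiser by a $\Gamma$-convergence argument, and you spell out why the fidelity term converges individually (separating the three lower-semicontinuous summands), a step the paper merely asserts.
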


\begin{proof}
    By Lemma \ref{lemma:jhuber-lsc}, we have
    \[
       \liminf_{k \to \infty} %\epsilon \downto 0, \gamma \upto \infty}
        \JepsilonXX[\alpha^k]{u_{\alpha^k,\gamma^k,\epsilon^k}}{\gamma^k,\epsilon^k}
        \ge
        \JepsilonXX[\alpha]{u_{\alpha,\gamma,\epsilon}}{\gamma,\epsilon}
        =
        \min_{u \in X} \JepsilonXX[\alpha]{u}{\gamma,\epsilon}.
    \]
    We also want the opposite inequality
    \begin{equation}
        \label{eq:strong-approximation-j-upper}
        \limsup_{k \to \infty} %\epsilon \downto 0, \gamma \upto \infty}
        \JepsilonXX[\alpha^k]{u_{\alpha^k,\gamma^k,\epsilon^k}}{\gamma^k,\epsilon^k}
        \le
        \JepsilonXX[\alpha]{u_{\alpha,\gamma,\epsilon}}{\gamma,\epsilon}.
    \end{equation}
    Let $\delta>0$. If $\epsilon=0$, we use Assumption \ref{ass:sol-smoothness} on $u=u_{\alpha,\gamma,\epsilon}$, to produce $u^\delta$. Otherwise, we set $u^\delta=u_{\alpha,\gamma,\epsilon}$. In either case
    \[
        \JepsilonXX[\alpha]{u^\delta}{\gamma, \epsilon} \le \JepsilonXX[\alpha]{u_{\alpha,\gamma,\epsilon}}{\gamma, \epsilon} + \delta.
    \]
    In particular $A_j u^\delta=0$ if $\alpha_j=\infty$.
    Then for large enough $k$ we obtain
    \begin{equation}
        \label{eq:u-conv-0}
        \begin{split}
        \JepsilonXX[\alpha^k]{u_{\alpha^k,\gamma^k,\epsilon^k}}{\gamma^k,\epsilon^k}
        &
        \le
        \JepsilonXX[\alpha^k]{u^\delta}{\gamma^k,\epsilon^k}
        \\
        &
        \le
        \JepsilonXX[\alpha]{u^\delta}{\gamma,\epsilon} + \delta
        \\
        &
        \le
        \JepsilonXX[\alpha]{u_{\alpha,\gamma,\epsilon}}{\gamma, \epsilon} + 2\delta.
        \end{split}
    \end{equation}
    Since $\delta>0$ was arbitrary, this proves \eqref{eq:strong-approximation-j-upper} and consequently \eqref{eq:strong-approximation-j}, that is
    \begin{equation}
        \label{eq:epsilon-to-zero-conv}
        \lim_{k \to \infty} %\epsilon \downto 0, \gamma \upto \infty}
        \JepsilonXX[\alpha^k]{u_{\alpha^k,\gamma^k,\epsilon^k}}{\gamma^k,\epsilon^k}
        =
        \min_{u \in X} \Jepsilon[\alpha]{u}=\Jorig[\alpha]{u_{\alpha,\gamma,\epsilon}}
        \le \Jepsilon[\alpha]{0}=\epsilon\Smoother(0)+\Phi(0) < \infty.
    \end{equation}
    Minding Proposition \ref{prop:bv-like-bound}, this allows us to extract a subsequence of $\{u_{\alpha,\gamma^k\epsilon^k}\}_{k = 1}^\infty$, unrelabelled, and convergent weakly* in $X$ to some
    \[
        \tilde u \in \argmin_{u \in X} \Jorig[\alpha]{u}.
    \]
    We may choose $u_{\alpha,\gamma,\epsilon} \defeq \tilde u$. This shows \eqref{eq:strong-approximation-weakstar}.

    If $K$ is compact, we may further assume that $Ku_{\alpha^k, \gamma^k, \epsilon^k} \to Ku_{\alpha,\gamma,\epsilon}$ strongly in $Y$, showing  \eqref{eq:strong-approximation-kstrong}.
    If $K$ is not compact, $\Phi$ is continuous and strongly convex by Assumption \ref{ass:phi}, and we still have $\Phi(Ku_{\alpha^k, \gamma^k, \epsilon^k}) \to \Phi(Ku_{\alpha,\gamma,\epsilon})$. The assumptions of Lemma \ref{lemma:strict-to-strong} are therefore satisfied. This shows \eqref{eq:strong-approximation-kstrong}.
\end{proof}

From the lower semicontinuity Lemma \ref{lemma:jhuber-lsc}, we immediately obtain the following standard result.

\begin{theorem}[Existence of solutions to the reconstruction sub-problem]
    \label{thm:subproblem-existence}
    Let $\Omega \subset \R^\DIMdomain$ be a bounded open domain. Suppose Assumption \ref{ass:a-k} and \ref{ass:phi} hold,  and that $\alpha \in \SPACEalphaCompactPosInt$, $\epsilon \ge 0$, and $\gamma \in (0, \infty]$.
    Then \eqref{eq:denoise-numerical-single} admits a minimiser $u_{\alpha,\gamma,\epsilon} \in X \isect \Dom \epsilon\Smoother$.
\end{theorem}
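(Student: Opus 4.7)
The proof plan is the standard direct method of the calculus of variations, assembling ingredients already established in the paper.

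First, I would check that $\inf_{u \in X} \Jepsilon[\alpha]{u} < \infty$, so that non-trivial minimisation makes sense. Assumption \ref{ass:phi} furnishes $\bar f \in X$ with $\Phi(K \bar f) = \min \Phi < \infty$; combined with Assumption \ref{ass:aj-zeroterm-approx} (to make all the Huber terms finite) and, in the case $\epsilon > 0$, Assumption \ref{ass:sol-smoothness} (to arrange $\Smoother < \infty$ at a nearby point), one produces a candidate with finite objective.

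Next I would select a minimising sequence $\{u^k\}_{k=1}^\infty \subset X$ with $\Jepsilon[\alpha]{u^k} \to \inf_{u \in X} \Jepsilon[\alpha]{u}$. In particular $\sup_k \Jepsilon[\alpha]{u^k} < \infty$, so Proposition \ref{prop:bv-like-bound} yields that $\{u^k\}$ is bounded in the standard norm of $X$ (via equivalence with $\norm{\freevar}_X'$). Since $X$ is a dual of $X_*$, the Banach--Alaoglu statement in Assumption \ref{ass:a-k} lets us extract an unrelabelled subsequence with $u^k \weaktostar \hat u$ weakly* in $X$ for some $\hat u \in X$.

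Finally I would invoke Lemma \ref{lemma:jhuber-lsc} with the constant sequences $\alpha^k \equiv \alpha$, $\gamma^k \equiv \gamma$, $\epsilon^k \equiv \epsilon$ to conclude
\[
    \Jepsilon[\alpha]{\hat u} \le \liminf_{k \to \infty} \Jepsilon[\alpha]{u^k} = \inf_{u \in X} \Jepsilon[\alpha]{u},
\]
so that $\hat u$ is a minimiser. Since $\Jepsilon[\alpha]{\hat u}$ is finite, necessarily $\epsilon \Smoother(\hat u) < \infty$, giving $\hat u \in X \isect \Dom \epsilon \Smoother$ as required. The only delicate point is the boundary case $\epsilon = 0$, which sits outside the stated hypotheses of Lemma \ref{lemma:jhuber-lsc}; there one argues directly, using the weak* lower semicontinuity of $\Phi$ (Assumption \ref{ass:phi}) together with continuity of $K$, and applying Lemma \ref{lemma:lsc}\ref{item:lsc-huber} through continuity of each $A_j$ to handle the Huber terms. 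The rest of the argument is routine; the coercivity statement in Proposition \ref{prop:bv-like-bound} and the lower semicontinuity machinery of Section \ref{sec:lsc} do all the real work.
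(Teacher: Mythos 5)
Your proposal is correct and follows essentially the same route as the paper: a minimising sequence is bounded via Proposition \ref{prop:bv-like-bound}, a weak* convergent subsequence is extracted using Assumption \ref{ass:a-k}, and Lemma \ref{lemma:jhuber-lsc} with constant parameter sequences supplies the lower semicontinuity. You are in fact slightly more careful than the paper, which applies Lemma \ref{lemma:jhuber-lsc} without comment even though its stated hypotheses nominally exclude $\epsilon=0$ (its proof does cover that case), and which does not explicitly verify properness of the objective.
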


%\begin{remark}
%    Thanks to the Hilbert term $\norm{\grad u}_{\SPACEgraduLtwo}^2$,
%    we do not need a bound for $\abs{\DX u}(\Omega)$. We however need
%    the assumption $\alpha_j \ge \alphalower > 0$ in order to obtain bound on
%    $u$ on $\nullspace{K}$; cf.~Example \ref{example:tgv2-1}.
%\end{remark}

\begin{proof}
    By Lemma \ref{lemma:jhuber-lsc}, fixing $(\alpha^k,\gamma^k,\epsilon^k)=(\alpha,\gamma,\epsilon)$, the functional $\Jepsilon[\alpha]{\freevar}$ is lower semicontinuous with respect to weak* convergence in $X$. So we just have to establish a weak* convergent minimising sequence. Towards this end, we let $\{u^k\}_{k=1}^\infty \subset \Dom \epsilon\Smoother$ be a minimising sequence for \eqref{eq:denoise-numerical-single}.
    We may assume without loss of generality that
    $\sup_k \Jepsilon[\alpha]{u^k} < \infty$.
    By Proposition \ref{prop:bv-like-bound} and the inequality $\JhuberSYM \le \JepsilonSYM$, we deduce $\sup_k \norm{u^k}_X < \infty$.
    After possibly switching to a subsequence, unrelabelled, we may therefore assume $\{u^k\}_{k=1}^\infty$ weakly* convergent in $X$ to some $\hat u \in X$.
    this proves the claim.
\end{proof}

\begin{proof}[Proof of Proposition \ref{prop:existence-compact}] 
    Let us take a sequence $\{\alpha^k\}_{k=1}^\infty$ convergent to $\alpha \in \SPACEalphaCompactPosInt$.
    Application of Lemma \ref{lemma:strong-approximation} with $\gamma^k=\gamma$ and $\epsilon^k=\varepsilon$ and the weak* lower semicontinuity of $\costf$ immediately show the lower semicontinuity of $\ValMap{\epsilon}$ within $\SPACEalphaCompactPosInt$.

    Finally, if $\{\alpha^k\}_{k=1}^\infty$ is a minimising sequence for \eqref{eq:learn-numerical-single}, by assumption we may take it to lie in $\mathcal{K}$. By the compactness of $\mathcal{K}$,  we may assume the sequence convergent to some $\alpha \in \mathcal{K}$.
    By the lower semicontinuity established above, $\hat u=u_{\alpha,\gamma,\epsilon}$ is a solution to \eqref{eq:learn-numerical-single}.
\end{proof}

\subsection{Towards $\Gamma$-convergence and continuity of the solution map}

The next lemma, immediate from the previous one, will form the first part of the proof of continuity of the solution map. As its condition, we introduce a stronger form of \eqref{eq:interior-solution-compact} that is uniform over a range of $\epsilon$ and $\gamma$.

\begin{lemma}[$\Gamma$-lower limit of the cost map in terms of regularisation]
    \label{lemma:gamma-lower}
    Suppose Assumption \ref{ass:a-k}, \ref{ass:phi}, and \ref{ass:sol-smoothness} hold.
    Let $\mathcal{K} \subset \SPACEalphaCompactPosInt$ be compact.
    Then
    \begin{equation}
        \label{eq:gamma-lower}
        \ValM{\gamma,\epsilon}(\alpha)
        \le \liminf_{(\alpha', \gamma',\epsilon') \to (\alpha, \gamma,\epsilon)}
        \ValM{\gamma',\epsilon'}(\alpha').
    \end{equation}
    when the convergence is within $\mathcal{K} \times [\bar \gamma, \infty] \times [0, \bar\epsilon]$.
\end{lemma}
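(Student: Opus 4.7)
The plan is to combine Lemma~\ref{lemma:strong-approximation} (strong approximation of reconstructions) with the weak* lower semicontinuity of $F$ assumed at the outset of Section~\ref{sec:probsetup}. The statement is essentially a one-sided continuity of $(\alpha, \gamma, \epsilon) \mapsto F(u_{\alpha, \gamma, \epsilon})$, and all the hard work of obtaining a convergent minimising subsequence has already been done in the preceding approximation lemma; what remains is to splice it together with lower semicontinuity of the outer cost.

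First I would take an arbitrary sequence $(\alpha^k, \gamma^k, \epsilon^k) \to (\alpha, \gamma, \epsilon)$ in $\mathcal{K} \times [\bar\gamma, \infty] \times [0, \bar\epsilon]$ and, by passing to a subsequence, arrange for the liminf on the right of \eqref{eq:gamma-lower} to be attained as a genuine limit. If this limit equals $+\infty$ there is nothing to show, so assume it is finite. Since $\mathcal{K} \subset \SPACEalphaCompactPosInt$ and $\bar\gamma > 0$, the limit triple $(\alpha, \gamma, \epsilon)$ lies in $\SPACEalphaCompactPosInt \times (0, \infty] \times [0, \infty)$, so we are in the scope of Lemma~\ref{lemma:strong-approximation}.

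Next I apply Lemma~\ref{lemma:strong-approximation} to extract a further subsequence, unrelabelled, along which $u_{\alpha^k, \gamma^k, \epsilon^k} \weaktostar \tilde u$ weakly* in $X$ and $Ku_{\alpha^k, \gamma^k, \epsilon^k} \to K\tilde u$ strongly in $Y$, with $\tilde u \in \argmin_{u \in X} \Jepsilon[\alpha]{u}$. Since $F$ is weak* lower semicontinuous by assumption, this yields
\[
    F(\tilde u) \le \liminf_{k \to \infty} F(u_{\alpha^k, \gamma^k, \epsilon^k})
    = \liminf_{k \to \infty} \ValM{\gamma^k,\epsilon^k}(\alpha^k).
\]
Choosing $\tilde u$ as the representative solution $u_{\alpha, \gamma, \epsilon}$ (equivalently, reading $\ValM{\gamma,\epsilon}(\alpha)$ as the infimum of $F$ over all minimisers of $\Jepsilon[\alpha]{\freevar}$), we conclude $\ValM{\gamma,\epsilon}(\alpha) \le F(\tilde u)$, which combined with the previous display gives \eqref{eq:gamma-lower}.

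The only delicate point is the ambiguity in the choice of minimiser when the inner problem lacks uniqueness; this is resolved by using precisely the minimiser supplied by Lemma~\ref{lemma:strong-approximation}. No new compactness or semicontinuity input is required beyond what is already recorded in Section~\ref{sec:auxres}, so the argument should be short.
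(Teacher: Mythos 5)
Your argument is correct and is precisely the paper's proof, which is stated in one line as a consequence of \eqref{eq:strong-approximation-weakstar} in Lemma \ref{lemma:strong-approximation} together with the weak* lower semicontinuity of $\costf$; you have merely spelled out the subsequence extraction and the (correct) resolution of the non-uniqueness of the inner minimiser by taking the representative supplied by that lemma. No discrepancy to report.
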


\begin{proof}
    Consequence of \eqref{eq:strong-approximation-weakstar} of Lemma \eqref{lemma:strong-approximation} and the weak* lower semicontinuity of $F$.
\end{proof}

The next lemma will be used to get partial strong convergence of minimisers as we approach $\BD \SPACEalphaPos$. This will then be used to derive simplified conditions for this not happening. This result is the counterpart of Lemma \ref{lemma:strong-approximation} that studied convergence of reconstructions away from the boundary, and depends on the additional Assumption \ref{ass:aj-zeroterm-approx}. This is the only place where we use the assumption, and replacing this lemma by one with different assumptions would allow us to remove Assumption \ref{ass:aj-zeroterm-approx}.

\begin{lemma}[Convergence of reconstructions at the boundary]
    \label{lemma:alpha-to-zero}
    Suppose Assumption \ref{ass:a-k}, \ref{ass:phi}, and \ref{ass:aj-zeroterm-approx} hold, and that $\Phi$ is strongly convex.
    Suppose $\{(\alpha^k,\gamma^k,\epsilon^k)\}_{k=1}^\infty \in \SPACEalphaCompactPosInt \times (0, \infty] \times [0, \bar\epsilon]$ satisfies $\alpha^k \to \alpha \in \BD \SPACEalphaCompactPos$.
    If $\bar \epsilon=0$ or Assumption \ref{ass:sol-smoothness} holds and $\bar \epsilon>0$ is small enough, then $Ku_{\alpha^k,\gamma^k,\epsilon^k} \to f$ strongly in $Y$.
\end{lemma}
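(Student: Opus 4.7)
The plan is to exploit the minimality of $u_k := u_{\alpha^k,\gamma^k,\epsilon^k}$ by comparing it against a carefully constructed competitor. Since $\alpha \in \BD \SPACEalphaCompactPos$ and the interior $\SPACEalphaCompactPosInt$ consists of points with all coordinates strictly positive, fix an index $\ell$ with $\alpha_\ell = 0$, so that $\alpha^k_\ell \to 0$. Assumption~\ref{ass:aj-zeroterm-approx} furnishes, for each $\delta > 0$, a function $\bar f_{\delta,\ell} \in X$ with $\Phi(K\bar f_{\delta,\ell}) < \inf\Phi + \delta$, with $\radonj[\ell]{A_\ell \bar f_{\delta,\ell}}$ finite, and with $A_j \bar f_{\delta,\ell} = 0$ for every $j \ne \ell$. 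This is exactly the object needed to bypass all regularisers except the $\ell$-th, which is the one whose weight is vanishing.

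When $\bar\epsilon = 0$, take $\tilde f_\delta := \bar f_{\delta,\ell}$ directly. When $\bar\epsilon > 0$, invoke Assumption~\ref{ass:sol-smoothness} applied to $\bar f_{\delta,\ell}$ with the auxiliary parameter vector $\tilde\alpha$ having $\tilde\alpha_\ell = 0$ and $\tilde\alpha_j = +\infty$ for $j \ne \ell$. Using the convention $\infty \cdot 0 = 0$ we have $\Jorig[\tilde\alpha]{\bar f_{\delta,\ell}} = \Phi(K\bar f_{\delta,\ell}) < \inf\Phi + \delta$, and so the smoothed $\tilde f_\delta$ satisfies $\Jorig[\tilde\alpha]{\tilde f_\delta} \le \inf\Phi + 2\delta$. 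Finiteness of the left-hand side forces $A_j \tilde f_\delta = 0$ for each $j \ne \ell$ and $\Phi(K\tilde f_\delta) \le \inf\Phi + 2\delta$, while $\Smoother(\tilde f_\delta) < \infty$ and $\radonj[\ell]{A_\ell \tilde f_\delta} < \infty$ follow from the smoothing assumption and boundedness of $A_\ell$ on $X$.

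By the minimality of $u_k$, non-negativity of all summands, and the bound $\huberj{\freevar} \le \radonj{\freevar}$,
\[
    \Phi(Ku_k)
    \le \JepsilonXX[\alpha^k]{u_k}{\gamma^k,\epsilon^k}
    \le \JepsilonXX[\alpha^k]{\tilde f_\delta}{\gamma^k,\epsilon^k}
    \le \bar\epsilon\,\Smoother(\tilde f_\delta) + \Phi(K\tilde f_\delta) + \alpha^k_\ell\,\radonj[\ell]{A_\ell \tilde f_\delta},
\]
where the $j \ne \ell$ contributions vanish because $A_j \tilde f_\delta = 0$. Letting $k \to \infty$ and using $\alpha^k_\ell \to 0$ with $\radonj[\ell]{A_\ell \tilde f_\delta}$ finite gives $\limsup_k \Phi(Ku_k) \le \bar\epsilon\,\Smoother(\tilde f_\delta) + \inf\Phi + 2\delta$. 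In the case $\bar\epsilon = 0$, sending $\delta \downarrow 0$ and combining with the lower bound $\Phi(Ku_k) \ge \inf\Phi = \Phi(f)$ yields $\Phi(Ku_k) \to \Phi(f)$. Proposition~\ref{prop:bv-like-bound} gives boundedness of $\{u_k\}$ in $X$, hence weak* compactness of $\{Ku_k\}$ in $Y$; strict convexity of $\Phi$ identifies any weak* cluster point with the unique minimiser $f$, and Lemma~\ref{lemma:strict-to-strong} then upgrades $\Phi$-strict convergence to strong convergence $Ku_k \to f$ in $Y$.

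The main obstacle is the case $\bar\epsilon > 0$: the residual $\bar\epsilon\,\Smoother(\tilde f_\delta)$ does not automatically vanish as $\delta \downarrow 0$, because $\Smoother(\tilde f_\delta)$ may blow up along the approximating family. Here the ``sufficiently small $\bar\epsilon$'' hypothesis is essential: $\bar\epsilon$ must be chosen below a data-dependent threshold determined by the growth rate of $\Smoother(\tilde f_\delta)$ as $\delta \downarrow 0$, so that $\bar\epsilon\,\Smoother(\tilde f_\delta) + 2\delta$ can be driven to zero along a coordinated choice of $\delta$ and $\bar\epsilon$. This coupling replaces the straightforward $\delta \downarrow 0$ limit used in the $\bar\epsilon = 0$ case, and is the only delicate point of the argument.
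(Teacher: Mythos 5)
Your argument follows the paper's proof essentially step for step: compare the minimiser $u_k$ against the competitor supplied by Assumption \ref{ass:aj-zeroterm-approx} (smoothed via Assumption \ref{ass:sol-smoothness} when $\bar\epsilon>0$), conclude $\Phi(Ku_k)\to\Phi(f)=\inf\Phi$, and upgrade to strong convergence via Lemma \ref{lemma:strict-to-strong}. Two remarks on the details. First, your device of applying Assumption \ref{ass:sol-smoothness} with the auxiliary weight $\tilde\alpha$ ($\tilde\alpha_\ell=0$, $\tilde\alpha_j=\infty$ for $j\ne\ell$) so as to force $A_j\tilde f_\delta=0$ for $j\ne\ell$ is a clean variant; the paper instead applies the assumption at $\alpha^k$ and notes separately that infinite components of $\alpha^k$ cause no harm. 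Second, in the $\bar\epsilon=0$ case you should not appeal to Proposition \ref{prop:bv-like-bound} for compactness: its coercivity constant degenerates as $\alpha^k$ approaches $\BD\SPACEalphaCompactPos$, so it gives no uniform bound on $\norm{u_k}_X$ along your sequence. What you actually need is only weak* compactness of $\{Ku_k\}$ in $Y$, which follows from the coercivity of $\Phi$ in Assumption \ref{ass:phi}; this is precisely what Lemma \ref{lemma:phi-to-zero} (applied with $C=Y$) packages, and the paper simply cites that lemma.

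On the $\bar\epsilon>0$ case: you are right that the residual $\bar\epsilon\,\Smoother(\tilde f_\delta)$ is the delicate term, but your proposed resolution --- a ``coordinated choice of $\delta$ and $\bar\epsilon$'' --- does not close the argument as stated. The lemma asks for one fixed $\bar\epsilon>0$ valid for all admissible sequences with $\epsilon^k\in[0,\bar\epsilon]$, whereas driving $\bar\epsilon\,\Smoother(\tilde f_\delta)+2\delta$ to zero forces $\bar\epsilon$ to shrink with $\delta$ whenever $\inf_{\delta>0}\Smoother(\tilde f_\delta)>0$, which is the typical situation. Be aware that the paper's own proof has exactly the same soft spot: it absorbs the term $\epsilon^k\Smoother(\cdot)$ ``for $\bar\epsilon>0$ small enough'' without tracking its dependence on $\delta$. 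So this is not a defect you introduced, but flagging the obstacle is not the same as overcoming it; a complete treatment would either assume $\epsilon^k\to0$, or settle for the approximate conclusion $\limsup_k\norm{Ku_k-f}_Y\le\sigma(\bar\epsilon)$ with $\sigma(\bar\epsilon)\to0$ as $\bar\epsilon\to0$, which is all that the downstream contradiction argument in Proposition \ref{prop:conditions-onealpha} actually requires.
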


\begin{proof}
    We denote for short $u^k \defeq u_{\alpha^k,\gamma^k,\epsilon^k}$, and note that $f$ is unique by the strong convexity of $\Phi$.
    Since $\bar\alpha \in \BD \SPACEalphaCompactPos$, there exist an index $\ell \in \{1,\ldots,\NA\}$ such that $\alpha^k_\ell \to 0$. We let $\ell$ be the first such index, and pick arbitrary $\delta > 0$.
    We take $\bar f_{\delta,\ell}$ as given by Assumption \ref{ass:aj-zeroterm-approx},
    observing that the construction still holds with Huberisation, that is, for any $\gamma \in (0, \infty]$ and in particular any $\gamma=\gamma^k$, we have
    %\begin{subequations}
    %\label{eq:f-approx-huber}
    \begin{align}
        %\label{eq:f-approx-delta-huber}
        \notag
        \Phi(K\bar f_{\delta,\ell})
        %+ 
        %\sum_{j=1}^{\ell-1} \huber[\gamma]{A_j \bar f_{\delta,\ell}}(\Omega)
        &
        < \delta + \Phi(f),
        \\
        \notag
        \huberj[\ell]{A_\ell \bar f_{\delta,\ell}}
        &
        \le
        \radonj[\ell]{A_\ell \bar f_{\delta,\ell}}
        < \infty,
        \quad\text{and},
        \\
        %\label{eq:f-approx-zero-huber}
        \notag
        \sum_{j \ne \ell} \huberj{A_j \bar f_{\delta,\ell}}
        &
        = 0.
    \end{align}
    If we are aiming for $\bar\epsilon>0$, let us also pick $\tilde f_{\delta,\ell}$ by application of Assumption \ref{ass:sol-smoothness} to $u=\bar f_{\delta,\ell}$.
    Otherwise, with $\bar\epsilon=0$, let us just set $\tilde f_{\delta,\ell}=\bar f_{\delta,\ell}$.
    %\end{subequations}
    %and
    % let
    %\[
    %    \overline\alpha_\ell \defeq \sup_{j=1,\ldots,\ell-1} \sup_{k \in \N} \alpha^k_j.
    %\]
    Since
    \[
        u^k \in \argmin_{u \in X} \JepsilonXX[\alpha^k]{u}{\gamma^k,\epsilon^k},
    \]
    we have
    \[
        \begin{split}
        \Phi(K u^k)
        &
        \le
        \JepsilonXX[\alpha^k]{u^k}{\gamma^k,\epsilon^k}
        \le
        \JepsilonXX[\alpha^k]{\tilde f_{\delta,\ell}}{\gamma^k,\epsilon^k}
        \le
        \JepsilonXX[\alpha^k]{\bar f_{\delta,\ell}}{\gamma^k,\epsilon^k} + \delta
        \\
        &
        =
        \epsilon^k\Smoother(\bar f_{\delta,\ell})
        +
        \Phi(K \bar f_{\delta,\ell})
        +\regfX[\alpha^k]{A \bar f_{\delta,\ell}}{\gamma^k}
        +\delta
        \\
        &
        \le
        \epsilon^k\Smoother(\bar f_{\delta,\ell})
        +
        \Phi(f)
        + \alpha_\ell^k \radonj[\ell]{A_\ell \bar f_{\delta,\ell}}
        + 2\delta.
        \end{split}
    \]
    Observe that it is no problem if some index $\alpha^k_j=\infty$, because by definition as a minimiser $u^k$ achieves smaller value than $\bar f_{\delta,\ell}$ above, and for the latter $\huberjX{A_j\bar f_{\delta,\ell}}{\gamma^k}=0$.
    Choosing $\bar \epsilon>0$ small enough, it follows for $\epsilon \in [0, \bar \epsilon]$ that
    \[
        0
        \le
        \Phi(K u^k) - \Phi(f)
        \le
        2\delta + \alpha_\ell^k \radonj[\ell]{A_\ell \bar f_{\delta,\ell}}.
    \]
    Choosing $k$ large enough, we thus see that
    \[
        0
        \le
        \Phi(K u^k) - \Phi(f)
        \le
        (2+\alpha_\ell)\delta.
    \]
    Letting $\delta \downto 0$, we see that $\Phi(Ku^k) \to \Phi(f)$. 
    Lemma \ref{lemma:phi-to-zero} with $C=Y$ therefore shows that $Ku^k \to f$ strongly in $Y$.
\end{proof}

%%%
\subsection{Minimality and co-coercivity}
%%%

Our remaining task is to show the existence of $\mathcal{K}$ for \eqref{eq:interior-solution-compact}, and of a uniform $\mathcal{K}$ -- see \eqref{eq:interior-solution-compact-strong} below -- for the application of Lemma \ref{lemma:gamma-lower}. When the fidelity and cost functionals satisfy some additional conditions, we will now reduce this to the existence of $\tilde\alpha \in \SPACEalphaPosInt$ satisfying $\costf(u_{\tilde\alpha}) < \costf(\bar f)$ for a specific $\bar f \in \inv K f$. 
So far, we have made no reference to the data, the ground-truth $f_0$ or the corrupted measurement data $f$. We now assume this in an abstract way, and need a type of source condition, called minimality, relating the ground truth $f_0$ to the noisy data $f$. We will get back to how this is obtained later.

\begin{definition}
    Let $p>0$.
    We say that $\bar v \in X$ is \emph{$(K,p)$-minimal} if there exists $C \ge 0$ and $\varphi_{\bar v} \in Y^*$ such that
    \[
        F(u)-F(\bar v)
        \ge
        \dprod{\varphi_{\bar v}}{K(u-\bar v)}
        -\frac{C}{p}\norm{K(u-\bar v)}_Y^p.
    \]
\end{definition}

\begin{remark}
    If we can take $C=0$, then the final condition just says that $K^* \varphi_{\bar v} \in \subdiff F(\bar v)$.
    This is a rather strong property.
    %, as in the case $\bar v=u_\alpha$, the parameter $\alpha$ also has to be minimal over all $\alpha'$ achieving $Ku_\alpha=Ku_{\alpha'}$.
    %If the range of $K$ is very small, this is unlikely to happen. \TODO{But?}
    %The following assumption on $\bar f$ is however easier to satisfy.
    Also, instead of $t \mapsto t^p$, we could in the following proofs use any strictly increasing energy $\psi: [0, \infty) \to [0, \infty)$, $\psi(0)=0$.
\end{remark}

%\begin{namedassumption}{$f$}%{$\bar f$}
%    \label{ass:f-star}
%    We assume for some $q>0$ that $\bar f$ is $(K, q)$-minimal.
%\end{namedassumption}

%In practise $f$ will be unique, as we will later need the strong convexity of $\Phi$.
To deal with the smoothing term $\epsilon\Smoother$ with $\epsilon>0$, we also need co-coercivity; for the justification of the term for the condition in \eqref{eq:co-coercive} below, more often seen in the context of monotone operators, we refer to the equivalences in \cite[Theorem 18.15]{bauschke2011convex}.

\begin{definition}
    We say that $\costf$ is \emph{$(K, p)$-co-coercive at $(u^*, \lambda^*) \in X \times X^*$}, $\lambda^* \in \subdiff F(u^*)$, if
    \begin{equation}
        \label{eq:co-coercive}
        \costf(u)-\costf(u^*) \le \dprod{\lambda^*}{u-u^*} 
        + \frac{C}{p}\norm{K(u-u^*)}_Y^p,
        \quad
        (u \in X).
    \end{equation}
    If $\costf$ is $(K,p)$-co-coercive at $(u, \lambda)$ for every $u \in X$ and $\lambda \in \subdiff F(u)$, we say that $\costf$ is $(K,p)$-co-coercive.
    If $p=2$, we say that $\costf$ is simply $K$-co-coercive.
\end{definition}

\begin{remark}
    In essence, $K$-co-coercivity requires $F=F_0 \circ K$ and usual ($I$-)co-coercivity of $F_0$.
\end{remark}

\begin{lemma}
    \label{lemma:k-convergence}
    Suppose $\bar v \in X$ is $(K, p)$-minimal.
    %Then
    %\begin{equation}
    %    \label{eq:f-star-argmin}
    %    \bar v \in \argmin_{\bar v \in \inv K v} \costf(\bar v).
    %\end{equation}
    %Moreover, 
    If $\{u^k\}_{k=1}^\infty \subset X$ satisfies $Ku^k \to Kv$ in $Y$, then
    \begin{equation}
        \label{eq:k-convergence}
        \costf(\bar v) \le \liminf_{k \to \infty} \costf(u^k).
    \end{equation}
    If, moreover, $\costf$ is $(K, p)$-co-coercive at $(\bar v, K^* \varphi_{\bar v})$, then
    \begin{equation}
        \label{eq:f-k-continuity}
        \costf(\bar v) = \lim_{k \to \infty} \costf(u^k).
    \end{equation}
\end{lemma}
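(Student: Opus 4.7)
\medskip

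\noindent\textbf{Proof proposal.}
The lemma is essentially a direct combination of the two one-sided variational inequalities that define $(K,p)$-minimality and $(K,p)$-co-coercivity, evaluated along the sequence $\{u^k\}_{k=1}^\infty$. The only non-trivial ingredient is the observation that both the linear term $\dprod{\varphi_{\bar v}}{K(u^k-\bar v)}$ and the $p$-power remainder $\norm{K(u^k-\bar v)}_Y^p$ vanish in the limit as soon as $Ku^k \to K\bar v$ strongly in $Y$, because $\varphi_{\bar v} \in Y^*$ is (by definition) a continuous linear functional and $t\mapsto t^p$ is continuous at $0$ for $p>0$.

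For the lower bound \eqref{eq:k-convergence}, the plan is to insert $u=u^k$ into the $(K,p)$-minimality inequality, yielding
\[
    \costf(u^k) - \costf(\bar v)
    \ge
    \dprod{\varphi_{\bar v}}{K(u^k-\bar v)}
    -\frac{C}{p}\norm{K(u^k-\bar v)}_Y^p.
\]
Since $Ku^k \to K\bar v$ in $Y$, the right-hand side tends to $0$, and taking $\liminf_{k\to\infty}$ gives $\costf(\bar v)\le\liminf_k\costf(u^k)$.

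For the equality \eqref{eq:f-k-continuity}, the plan is to add the matching upper bound from co-coercivity. By $(K,p)$-co-coercivity at $(\bar v,K^*\varphi_{\bar v})$ applied to $u=u^k$,
\[
    \costf(u^k) - \costf(\bar v)
    \le
    \dprod{K^*\varphi_{\bar v}}{u^k-\bar v}
    + \frac{C'}{p}\norm{K(u^k-\bar v)}_Y^p
    =
    \dprod{\varphi_{\bar v}}{K(u^k-\bar v)}
    + \frac{C'}{p}\norm{K(u^k-\bar v)}_Y^p,
\]
where in the last step I use the adjointness of $K$ and $K^*$ so that the estimate depends on $u^k$ only through $K(u^k-\bar v)$. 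Passing to $\limsup$ and again using $Ku^k\to K\bar v$ in $Y$ yields $\limsup_k\costf(u^k)\le\costf(\bar v)$. Combining with the lower bound established in the first step produces \eqref{eq:f-k-continuity}.

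There is no real obstacle; the only point to emphasise is precisely the reason that $(K,p)$-co-coercivity is phrased so that the perturbation term on the right-hand side is controlled by $\norm{K(u-\bar v)}_Y^p$ rather than by an $X$-norm — this is what allows the argument to proceed from mere strong convergence of $Ku^k$ in $Y$, without any control on $u^k$ itself in $X$.
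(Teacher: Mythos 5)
Your proposal is correct and follows essentially the same route as the paper: the lower bound comes from inserting $u=u^k$ into the $(K,p)$-minimality inequality, the upper bound from the $(K,p)$-co-coercivity inequality via adjointness of $K$, and both remainder terms vanish because they depend on $u^k$ only through $\norm{K(u^k-\bar v)}_Y$.
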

\begin{proof}
    %The proof of \eqref{eq:f-star-argmin} is standard from the source condition in Assumption \ref{ass:f-star}.
    For \eqref{eq:k-convergence}, we use the $(K, p)$-minimality of $\bar v$ to obtain
    \[
        \costf(u^k) - \costf(\bar v) \ge \dprod{\varphi_{\bar v}}{Ku^k-v}
            - \frac{C}{p}\norm{Ku^k-v}_Y^p,
        \quad 
        (k=1,2,3,\ldots).
    \]
    Taking the limit, it follows that
    \[
        \liminf_{k \to \infty} \costf(u^k)
        \ge
        \costf(\bar v).
    \]

    If we additionally have the $(K, p)$-co-coercivity at $(\bar v, K^* \varphi_{\bar v})$, then, likewise
    \[
        \costf(u^k) - \costf(\bar v) \le \dprod{\varphi_{\bar v}}{Ku^k-v}
                + \frac{C}{2}\norm{Ku^k-v}_{Y}^p,
        \quad 
        (k=1,2,3,\ldots).
    \]
    From this we immediately get
    \[
        \limsup_{k \to \infty} \costf(u^k)
        \le
        \costf(\bar v).
        \qedhere
    \]
\end{proof}

% To provide the missing linke between \eqref{lemma:conditions-squared-squared} and Assumption \ref{ass:interior-solution-compact-strong}, we now concentrate on the $L^p$ fidelities
% \begin{equation}
%     \label{eq:lp-fid}
%     \Phi(v) = \norm{f-v}_{L^p(\Omega; \DIMkurange)}^p,
%     \quad
%     f \in Y \defeq L^p(\Omega; \DIMkurange),
%     \quad
%     (0 < p < \infty).
% \end{equation}
% We recall that we needed strict convexity in Proposition \ref{prop:existence-compact} and therefore exclude $p=1$. 

\begin{proposition}[One-point conditions under co-coercivity]
    \label{prop:conditions-onealpha}
    Suppose Assumption \ref{ass:a-k}, \ref{ass:phi} and \ref{ass:aj-zeroterm-approx} hold, and that $\Phi$ is \emph{strongly} convex. If $\bar f$ is $(K, q)$-minimal and
     \begin{equation}
        \label{eq:costf-tildeu}
        \text{some}\quad
        \tilde\alpha \in \SPACEalphaPosInt
        \quad\text{and}\quad
        u_{\tilde\alpha} \in \argmin_{u \in X} \Jorig[\tilde\alpha]{u}
        \quad\text{satisfy}\quad
        \begin{cases}
            \costf(u_{\tilde\alpha}) < \costf(\bar f), \quad \text{and}\\
            u_\alpha \text{ is } (K, p)\text{-minimal},
        \end{cases}
    \end{equation}
    then there exist $\bar\gamma,\bar\epsilon>0$ such that the following hold.
    \begin{enumroman}
        \item
        \label{item:a-exist-zero}
        For each $\epsilon \in [0, \hat \epsilon]$ and $\gamma \in [\bar\gamma,\infty]$ there exists a compact set $\mathcal{K} \subset \SPACEalphaCompactPosInt$ such that \eqref{eq:interior-solution-compact} holds.
        \item
        \label{item:a-exist-barepsilon}
        If, moreover, Assumption \ref{ass:sol-smoothness} holds and $\costf$ is $(K,p)$-co-coercive for any $p>0$, then there exist a compact set $\mathcal{K} \subset \SPACEalphaCompactPosInt$ such that
        \begin{equation}
            \label{eq:interior-solution-compact-strong}
            \inf_{\alpha \in \SPACEalphaCompactPos \setminus \mathcal{K}}
                \costf(\costK u_{\alpha,\gamma,\epsilon}) 
            >
            \inf_{\alpha \in \SPACEalphaCompactPos }
                \costf(\costK u_{\alpha,\gamma,\epsilon}),
            \quad
            (\gamma \in [\bar\gamma, \infty], \epsilon \in [0, \bar \epsilon]).
        \end{equation}

    \end{enumroman}
\end{proposition}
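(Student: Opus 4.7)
The strategy is to separate the boundary $\BD\SPACEalphaCompactPos = \{\alpha : \min_j \alpha_j = 0\}$ from an interior witness, exploiting the gap $\costf(u_{\tilde\alpha})<\costf(\bar f)$ assumed in \eqref{eq:costf-tildeu}. Concretely, the aim is to produce $\delta>0$ and an open $U \supset \BD\SPACEalphaCompactPos$ such that $\ValMap{\epsilon}(\alpha) \ge \costf(\bar f)-\delta$ for every $\alpha \in U$, while $\inf_\alpha \ValMap{\epsilon}(\alpha)<\costf(\bar f)-\delta$. Then $\mathcal{K} \defeq \SPACEalphaCompactPos\setminus U$ is a compact subset of $\SPACEalphaCompactPosInt$ (using that $\SPACEalphaCompactPos=[0,\infty]^\NA$ is itself compact) satisfying either \eqref{eq:interior-solution-compact} or its uniform counterpart \eqref{eq:interior-solution-compact-strong}.

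For the boundary side, I would take any sequence $\alpha^k \to \alpha \in \BD\SPACEalphaCompactPos$ together with $(\gamma^k,\epsilon^k) \in [\bar\gamma,\infty]\times[0,\bar\epsilon]$. Lemma \ref{lemma:alpha-to-zero} (valid since we may arrange $\bar\epsilon=0$, or, under Assumption \ref{ass:sol-smoothness} in case (ii), $\bar\epsilon>0$ sufficiently small) yields $Ku_{\alpha^k,\gamma^k,\epsilon^k}\to f$ strongly in $Y$. Since $\bar f$ is $(K,q)$-minimal with $K\bar f=f$, the first part of Lemma \ref{lemma:k-convergence} applied at $\bar v=\bar f$ then gives $\liminf_k \costf(u_{\alpha^k,\gamma^k,\epsilon^k}) \ge \costf(\bar f)$. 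A standard compactness/contradiction argument on the compact range $[\bar\gamma,\infty]\times[0,\bar\epsilon]$ promotes this pointwise bound into a uniform neighbourhood estimate: for any $\delta>0$ there exists an open $U \supset \BD\SPACEalphaCompactPos$ with $\ValMap{\epsilon}(\alpha) \ge \costf(\bar f)-\delta$ for every $\alpha \in U$ and every $(\gamma,\epsilon)$ in the range.

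For the interior side, at $(\gamma,\epsilon)=(\infty,0)$ we have directly $\ValMap{0}(\tilde\alpha)=\costf(u_{\tilde\alpha})<\costf(\bar f)$. For nearby pairs, Lemma \ref{lemma:strong-approximation} applied with the fixed parameter $\tilde\alpha$ supplies $Ku_{\tilde\alpha,\gamma,\epsilon} \to Ku_{\tilde\alpha}$ strongly in $Y$ as $(\gamma,\epsilon)\to(\infty,0)$. In case (ii), the assumed $(K,p)$-co-coercivity of $\costf$ activates the second (equality) half of Lemma \ref{lemma:k-convergence} at the $(K,p)$-minimal point $u_{\tilde\alpha}$, upgrading weak$^*$ convergence to the full numerical convergence $\costf(u_{\tilde\alpha,\gamma,\epsilon}) \to \costf(u_{\tilde\alpha})$. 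Choosing $\bar\gamma$ large and $\bar\epsilon$ small then secures $\ValMap{\epsilon}(\tilde\alpha)<\costf(\bar f)-\delta$ \emph{uniformly} over the range, which combined with the previous step yields the uniform $\mathcal{K}$ demanded in (ii). For (i) the compact $\mathcal{K}$ is permitted to depend on $(\gamma,\epsilon)$, so I would simply apply the same scheme pointwise at each pair near $(\infty,0)$, without invoking co-coercivity.

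The main obstacle is precisely this passage from $(K,p)$-minimality of $u_{\tilde\alpha}$ -- which only yields a $\liminf$ lower bound via Lemma \ref{lemma:k-convergence} -- to an \emph{upper} bound on $\costf(u_{\tilde\alpha,\gamma,\epsilon})$ that lets the strict gap $\costf(u_{\tilde\alpha})<\costf(\bar f)$ survive on a whole neighbourhood of $(\infty,0)$. The uniform statement (ii) genuinely needs the stronger $(K,p)$-co-coercivity hypothesis for this upgrade; the weaker (i), which tolerates a parameter-dependent $\mathcal{K}$, gets away with the pointwise reduction to $(\infty,0)$, which explains the asymmetry between the two claims.
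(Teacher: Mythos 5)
Your overall architecture is the paper's: Lemma \ref{lemma:alpha-to-zero} plus the $(K,q)$-minimality half of Lemma \ref{lemma:k-convergence} to force the cost up to $\costf(\bar f)$ along any sequence $\alpha^k\to\BD\SPACEalphaCompactPos$; Lemma \ref{lemma:strong-approximation} plus the co-coercivity half of Lemma \ref{lemma:k-convergence} to keep the interior witness $\tilde\alpha$ strictly below $\costf(\bar f)$ for $(\gamma,\epsilon)$ near $(\infty,0)$; and compactness of $\SPACEalphaCompactPos\times[\bar\gamma,\infty]\times[0,\bar\epsilon]$ to convert the two estimates into the existence of $\mathcal{K}$. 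Whether one phrases the last step as an explicit open $U\supset\BD\SPACEalphaCompactPos$ with $\mathcal{K}=\SPACEalphaCompactPos\setminus U$, as you do, or as a sequential contradiction, as the paper does, is immaterial.

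The one place you genuinely depart from the paper is the closing claim that part (i) ``gets away with the pointwise reduction to $(\infty,0)$, without invoking co-coercivity''. It does not. Statement (i) asserts \eqref{eq:interior-solution-compact} for \emph{each} $(\gamma,\epsilon)\in[\bar\gamma,\infty]\times[0,\bar\epsilon]$, and at any such pair other than $(\infty,0)$ the strict inequality still needs an interior witness, i.e.\ some $\alpha$ with $\costf(\costK u_{\alpha,\gamma,\epsilon})<\costf(\bar f)$; the hypothesis \eqref{eq:costf-tildeu} supplies this only at $(\infty,0)$. Transferring it to nearby $(\gamma,\epsilon)$ requires exactly the upper bound $\limsup_{k}\costf(u_{\tilde\alpha,\gamma^k,\epsilon^k})\le\costf(u_{\tilde\alpha})$ that you correctly identify as the crux for (ii) --- the $(K,p)$-minimality of $u_{\tilde\alpha}$ alone yields only the useless $\liminf$ direction. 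The paper makes both parts conditional on the intermediate statement \eqref{eq:costf-tildeu-epsilon} and verifies it at the end using minimality \emph{and} co-coercivity at $(u_{\tilde\alpha},K^*\varphi_{u_{\tilde\alpha}})$, so the asymmetry you posit between (i) and (ii) is not present in the actual argument, and your version of (i) is left with a gap at every $(\gamma,\epsilon)\ne(\infty,0)$. The repair is simply to run your case-(ii) interior argument for (i) as well.
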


In both cases, the existence of $\mathcal{K}$ says that every solution $\bar \alpha$ to \eqref{eq:learn-numerical-single} satisfies $\bar \alpha \in \mathcal{K}$.

%\begin{remark}
%    Although $u_{\alpha^0}$ is not necessarily unique, $Ku_{\alpha^0}$ is, by the
%    strict convexity of the squared $L^2$ term.
%\end{remark}

\begin{proof}
    We note that $f$ is unique by the strong convexity of $\Phi$.
    Let us first prove \ref{item:a-exist-zero}.
    In fact, let us pick $\doublebar\gamma,\doublebar\epsilon>0$ and assume with $\tilde \alpha$ fixed that
    \begin{equation}
        \label{eq:costf-tildeu-epsilon}
        %\text{some}\quad
        %\tilde\alpha \in \SPACEalphaPosInt
        %\quad\text{and}\quad
        u_{\tilde\alpha,\gamma,\epsilon} \in \argmin_{u \in X} \Jepsilon[\tilde\alpha]{u}
        \quad\text{satisfy}\quad
        \costf(u_{\tilde\alpha,\gamma,\epsilon}) < \costf(\bar f),
        \quad (\gamma \in [\doublebar\gamma,\infty], \epsilon \in [0, \doublebar\epsilon]).
    \end{equation}
    We want to show the existence of a compact set $\mathcal{K} \subset \SPACEalphaCompactPos$ such that solutions $\hat \alpha$ to \eqref{eq:learn-numerical-single} satisfy $\hat \alpha \in \mathcal{K}$ whenever $(\gamma, \epsilon) \in [\bar\gamma,\infty] \times [0, \bar \epsilon]$ for $\bar\gamma \in [\doublebar\gamma,\infty)$ and $\bar\epsilon \in (0, \doublebar\epsilon]$ to be determined during the course of the proof.
    We thus let $(\alpha^k, \gamma^k, \epsilon^k) \in \SPACEalphaCompactPos \times [\bar\gamma,\infty] \times [0,\bar\epsilon]$.
    Since this set is compact, we may assume that $\alpha^k \to \hat\alpha \in \SPACEalphaCompactPos$, and $\epsilon^k \to \hat\epsilon$, and $\gamma^k \to \hat\gamma$.
    %
    %
    %If $\mathcal{K}$ does not exist, we can choose the sequence $\{\alpha^k\}_{k=1}^\infty$ such that
    Suppose $\hat \alpha \in \BD \SPACEalphaCompactPos$.
    By Lemma \ref{lemma:alpha-to-zero} then $Ku^k \to f$ strongly in $Y$ for small enough $\bar\epsilon$, with no conditions on $\bar\gamma$. Further by the $(K, q)$-minimality of $\bar f$ and Lemma \ref{lemma:k-convergence} then
    \begin{equation}
        \label{eq:contradiction-tildeu}
        \costf(\bar f)
        \le
        \liminf_{k \to \infty} \costf(Ku^k).
    \end{equation}

    If we fix $\gamma^k \defeq \gamma$ and $\epsilon^k$, and pick $\{\alpha^k\}_{k=1}^\infty$ is a minimising sequence for \eqref{eq:learn-numerical-single}, we find that \eqref{eq:contradiction-tildeu} is in contradiction to \eqref{eq:costf-tildeu}. Necessarily then $\hat\alpha \in \SPACEalphaCompactPosInt$. By the lower semicontinuity result of Proposition \ref{prop:existence-compact}, $\hat\alpha$ therefore has to solve \eqref{eq:learn-numerical-single}. We have proved \ref{item:a-exist-zero}, because, if $\mathcal{K}$ did not exist, we could choose $\alpha^k \to \hat\alpha \in \BD\SPACEalphaPos$.

    If $\gamma^k \to \hat\gamma$, $\epsilon^k \to \hat \epsilon$, and $\alpha^k$ solves \eqref{eq:learn-numerical-single} for $(\gamma, \epsilon)=(\gamma^k, \epsilon^k)$, then $\alpha^k \in \SPACEalphaCompactPosInt$ by \ref{item:a-exist-zero}. Now \eqref{eq:contradiction-tildeu} is in contradiction to \eqref{eq:costf-tildeu-epsilon}.
    Therefore \ref{item:a-exist-barepsilon} holds if \eqref{eq:costf-tildeu-epsilon} holds.
    
    It remains to verify \eqref{eq:costf-tildeu-epsilon} for $\doublebar\epsilon>0$ small enough and $\doublebar\gamma>0$ large enough. By Lemma \ref{lemma:strong-approximation}, we may find a sequence $\epsilon^k \downto 0$ and $\gamma^k \upto \infty$ such that $Ku_{\tilde \alpha,\gamma^k,\epsilon^k} \to K\tilde u_{\tilde \alpha}$ for some $\tilde u_{\tilde \alpha} \in \argmin \Jorig[\tilde \alpha]{\freevar}$.
    Since $\Phi$ is strictly convex, and both $u_{\tilde\alpha}, \tilde u_{\tilde \alpha} \in \argmin_{u \in X} \Jorig[\tilde \alpha]{u}$, we find that $K \tilde u_{\tilde\alpha}=K u_{\tilde\alpha}$. Recalling the $(K,p)$-minimality and -co-coercivity at $(u_{\tilde \alpha}, K^* \varphi_{u_{\tilde \alpha}})$, Lemma \ref{lemma:k-convergence} and \eqref{eq:costf-tildeu} now yield
    \[
        \limsup_{k \to \infty} \costf(u_{\tilde \alpha,\gamma^k,\epsilon^k}) = \costf(u_{\tilde \alpha}) < F(\bar f).
    \]
    Since we may repeat the above arguments on arbitrary sequences $(\gamma^k, \epsilon^k) \to (\infty, 0)$, we conclude that \eqref{eq:costf-tildeu-epsilon} holds for small enough $\bar \epsilon>0$ and large enough $\bar\gamma>0$.
\end{proof}

\begin{comment}
    Fixing $\ell \in \{1,\ldots,\NA\}$, we pick $\delta>0$ and take $\bar f_{\delta,\ell}$ according to  \eqref{eq:f-approx-zero}.
    We then use Assumption \ref{ass:sol-smoothness}  with $u=\bar f_{\delta,\ell}$ and take $\tilde f_{\delta,\ell}=u^\delta \in \SPACEuHilbertCons \isect X$.
    Since $\Jhuber[\alpha]{u_\alpha} \le \Jhuber[\alpha]{\tilde f_{\delta,\ell}}$, it follows that
    \[
        \begin{split}
        \Phi(K u_\alpha)
        %+ \sum_{j=1}^\NA \alpha_j \huber{A_j u_\alpha}(\Omega)
        +\regf[\alpha]{A u_\alpha}
        %\\
        &
        \le
        \epsilon\Smoother(\tilde f_{\delta,\ell})
        +\Phi(K \tilde f_{\delta,\ell})
        +\regf[\alpha]{A \tilde f_{\delta,\ell}}
        \\
        &
        \le
        \epsilon\Smoother(\tilde f_{\delta,\ell})
        +\Phi(K \bar f_{\delta,\ell})
        +\regf[\alpha]{A \bar f_{\delta,\ell}}
        + \delta
        \\
        &
        \le
        \epsilon\Smoother(\tilde f_{\delta,\ell})
        + \Phi(f)
        + \alpha_\ell \huberj[\ell]{A_\ell \bar f_{\delta,\ell}}
        + 2\delta.
        \end{split}
    \]
    Choosing $\overline\alpha_\delta>0$ and  $\overline\epsilon_\delta> 0$ small enough, 
    we can %when $\alpha_\ell \in (0,\overline\alpha_\delta)$ 
    force
    \begin{equation}
        \notag
        \Phi(K u_\alpha) - \Phi(f)
        \le 4\delta,
        %\le \sqrt{6\delta},
        \quad
        (\alpha_\ell \in (0,\overline\alpha_\delta),
         \epsilon \in [0, \overline\epsilon_\delta]).
    \end{equation}
\end{comment}

We now show the $\Gamma$-convergence of the cost map, and as a consequence the outer semicontinuity of the solution map. For an introduction to $\Gamma$-convergence, we refer to \cite{braides2002gamma,maso1993introduction}.

\begin{proposition}[$\Gamma$-convergence of the cost map and continuity of the solution map]
    \label{prop:gamma-convergence}
    Suppose Assumption \ref{ass:a-k}, Assumption \ref{ass:phi}, and Assumption \ref{ass:sol-smoothness} hold along with \eqref{eq:interior-solution-compact-strong}.
    Suppose, moreover, that $F$ is $(K, p)$-co-coercive, and every solution $u_{\alpha,\gamma,\epsilon}$ to \eqref{eq:denoise-numerical-single} is $(K, p)$-minimal with $\alpha \in \mathcal{K}$ and $(\gamma,\epsilon) \in [\bar \gamma, \infty] \times [0, \bar\epsilon]$.
    Then
    \begin{equation}
        \label{eq:gamma}
        \ValM{\gamma',\epsilon'}|\mathcal{K} \,\mathop{\to}^\Gamma\, \ValM{\gamma,\epsilon}|\mathcal{K}
    \end{equation}
    when $(\gamma',\epsilon'), (\gamma,\epsilon) \in [\bar \gamma, \infty] \times [0, \bar\epsilon]$ and $\mathcal{K}$ is as in \eqref{eq:interior-solution-compact-strong}.
    Moreover, the solution map
    \[
        \SolM(\gamma, \epsilon) = \argmin_{\alpha \in \SPACEalphaCompactPos}~\ValM{\gamma,\epsilon}(\alpha)
    \]
    is outer semicontinuous within $[\bar \gamma, \infty] \times [0, \bar\epsilon]$.
\end{proposition}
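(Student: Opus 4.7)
The plan is to verify the two standard $\Gamma$-convergence inequalities on $\mathcal{K}$ and then conclude outer semicontinuity of $\SolM$ by the classical argument linking $\Gamma$-convergence to convergence of minimisers. The $\Gamma$-liminf inequality
\[
    \ValM{\gamma,\epsilon}(\alpha) \le \liminf_{k\to\infty} \ValM{\gamma^k,\epsilon^k}(\alpha^k),
\]
for every $(\alpha^k,\gamma^k,\epsilon^k) \to (\alpha,\gamma,\epsilon)$ inside $\mathcal{K} \times [\bar\gamma,\infty] \times [0,\bar\epsilon]$, is immediate from Lemma \ref{lemma:gamma-lower}.

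For the $\Gamma$-limsup bound I propose the constant recovery sequence $\alpha^k \equiv \alpha$. Given an arbitrary subsequence indexed by $\{k_j\}$, the plan is to apply Lemma \ref{lemma:strong-approximation} along it to extract a further subsequence, not relabelled, and a minimiser $u^\star \in \argmin \Jepsilon[\alpha]{\freevar}$ with $K u_{\alpha,\gamma^{k_j},\epsilon^{k_j}} \to K u^\star$ strongly in $Y$. Since $u^\star$ is $(K,p)$-minimal by hypothesis and $\costf$ is $(K,p)$-co-coercive everywhere, Lemma \ref{lemma:k-convergence} applied at $\bar v = u^\star$ then yields $\costf(u_{\alpha,\gamma^{k_j},\epsilon^{k_j}}) \to \costf(u^\star)$. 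A symmetric application of $(K,p)$-minimality to $u^\star$ and to the reference solution $u_{\alpha,\gamma,\epsilon}$ used to define $\ValM{\gamma,\epsilon}$, combined with co-coercivity, will force $\costf(u^\star)=\costf(u_{\alpha,\gamma,\epsilon})$. Because every subsequence then admits a further subsequence whose cost converges to $\ValM{\gamma,\epsilon}(\alpha)$, the whole sequence $\ValM{\gamma^k,\epsilon^k}(\alpha)$ must do the same, which in particular gives the $\Gamma$-limsup bound.

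With both inequalities at hand, outer semicontinuity follows by the standard argument. Suppose $(\gamma^k,\epsilon^k) \to (\gamma,\epsilon)$ in $[\bar\gamma,\infty] \times [0,\bar\epsilon]$ and $\alpha^k \in \SolM(\gamma^k,\epsilon^k)$ with $\alpha^k \to \alpha$. Condition \eqref{eq:interior-solution-compact-strong} forces $\alpha^k \in \mathcal{K}$, so by compactness of $\mathcal{K}$ we also have $\alpha \in \mathcal{K}$. For an arbitrary competitor $\beta \in \mathcal{K}$ I take the constant sequence $\beta^k \equiv \beta$: minimality of $\alpha^k$ gives $\ValM{\gamma^k,\epsilon^k}(\alpha^k) \le \ValM{\gamma^k,\epsilon^k}(\beta)$, and combining the $\Gamma$-liminf inequality on the left with the $\Gamma$-limsup inequality applied to $\beta^k \equiv \beta$ on the right yields $\ValM{\gamma,\epsilon}(\alpha) \le \ValM{\gamma,\epsilon}(\beta)$. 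Appealing to \eqref{eq:interior-solution-compact-strong} once more extends this inequality from $\beta \in \mathcal{K}$ to all $\beta \in \SPACEalphaCompactPos$, placing $\alpha$ in $\SolM(\gamma,\epsilon)$.

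The one delicate step is the identification $\costf(u^\star)=\costf(u_{\alpha,\gamma,\epsilon})$ within the $\Gamma$-limsup argument, which is exactly what makes the recovery limit independent of the particular minimiser that Lemma \ref{lemma:strong-approximation} selects. Everything else is essentially a repackaging of the auxiliary lemmas of Section \ref{sec:auxres}; it is only at this step that one must combine $(K,p)$-minimality of \emph{both} candidate minimisers with the $(K,p)$-co-coercivity of $\costf$ and the strong $Y$-convergence of $K u_{\alpha,\gamma^{k_j},\epsilon^{k_j}}$ to $K u^\star$ to pin down the common value.
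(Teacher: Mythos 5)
Your proof is correct and follows essentially the same route as the paper: the $\Gamma$-liminf from Lemma \ref{lemma:gamma-lower}, the constant recovery sequence combined with Lemmas \ref{lemma:strong-approximation} and \ref{lemma:k-convergence} for the $\Gamma$-limsup, and then outer semicontinuity --- where the paper simply invokes equi-mild coercivity and the fundamental theorem of $\Gamma$-convergence from Braides, while you unfold that standard argument by hand. The minimiser-identification step you flag as delicate (that the recovery limit is independent of which minimiser defines $\ValM{\gamma,\epsilon}(\alpha)$) is in fact glossed over in the paper's own proof, which takes the limit point produced by Lemma \ref{lemma:strong-approximation} as the representative; your explicit use of $(K,p)$-minimality and co-coercivity of both candidates to pin down the common value is the more careful treatment.
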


\begin{proof}
    Lemma \ref{lemma:gamma-lower} shows the $\Gamma$-lower limit \eqref{eq:gamma-lower}.
    We still have to show the $\Gamma$-upper limit. This means that given $\hat \alpha \in \mathcal{K}$ and $(\gamma^k,\epsilon^k) \to (\gamma,\epsilon)$
    within $[0, \bar \epsilon] \times [\bar\gamma, \infty]$,
    we have to show the existence of a sequence $\{\alpha^k\}_{k=1}^\infty \subset \mathcal{K}$ such that
    \[
        \ValM{\gamma,\epsilon}(\hat \alpha)
        \ge \limsup_{k \to \infty}
        \ValM{\gamma^k,\epsilon^k}(\alpha^k).
    \]

    We claim that we can take $\alpha^k=\hat \alpha$. With $u^k \defeq u_{\alpha^k,\gamma^k,\epsilon^k}$, Lemma \ref{lemma:strong-approximation} gives a subsequence satisfying $Ku^k \to K\hat u$ strongly with $\hat u$ a minimiser of $\JepsilonXX[\alpha]{\freevar}{\gamma,\epsilon}$. We just have to show that
    \begin{equation}
        \label{eq:costf-limit-cont}
        \costf(\hat u)=\lim_{k \to \infty} \costf(u^k).
    \end{equation}
    Since $F$ is $(K,p)$-co-coercive, and $\hat u$ by assumption $(K,p)$-minimal, this follows from Lemma \ref{lemma:k-convergence}.

    We have therefore established the $\Gamma$-convergence of $\ValM{\gamma',\epsilon'}|\mathcal{K}$ to $\ValM{\gamma,\epsilon}|\mathcal{K}$ as $(\gamma',\epsilon') \to (\gamma,\epsilon)$ within $[\bar\gamma, \infty] \times [0, \bar \epsilon]$.
    Our assumption \eqref{eq:interior-solution-compact-strong} says that the family $\{\ValM{\gamma',\epsilon'} \mid (\gamma',\epsilon') \in [\bar\gamma, \infty] \times [0, \bar \epsilon]\}$ is equi-mildly coercive in the sense of \cite{braides2002gamma}. Therefore, by the properties of $\Gamma$-convergence, see \cite[Theorem 1.12]{braides2002gamma}, the solution map is outer semicontinuous.
\end{proof}

%%%
\subsection{The $L^2$-squared fidelity with $(K,2)$-co-coercive cost}
%%%

In what follows, we seek to prove \eqref{eq:costf-tildeu} for the $L^2$-squared fidelity with $(K, 2)$-co-coercive cost functionals by imposing more natural conditions derived from \eqref{eq:regfix-interior-condition-barf} in the next lemma.

\begin{lemma}[Natural conditions for $L^2$-squared $2$-co-coercive case]
    \label{lemma:cocoercive}
    Suppose Assumption \ref{ass:a-k} and \ref{ass:aj-zeroterm-approx} hold.
    Let $Y$ be a Hilbert space, $f \in \range{K}$, and
    \[
        \Phi(v)=\frac{1}{2} \norm{f-v}_Y^2.
    \] 
    %Let $\bar f \in \inv K f$ and $\varphi_{\bar f}$ be as in Assumption \ref{ass:f-star}.
    Then \eqref{eq:costf-tildeu} holds if $\bar f$ is $(K,2)$-minimal, $\costf$ is $(K, 2)$-co-coercive at $(\bar f, K^*\varphi_{\bar f})$ with $K^*\varphi_{\bar f} \in \subdiff\costf(\bar f)$, and
    \begin{equation}
        \label{eq:regfix-interior-condition-barf}
        \regfMARG{f} > \regfMARG{f-t \varphi_{\bar f}}
    \end{equation}
    for some $\bar\alpha \in \SPACEalphaPosInt$ and $t \in (0, 1/C]$, where $C$ is the co-coercivity constant.
\end{lemma}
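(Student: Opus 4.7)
My plan is to take $\tilde\alpha = s\bar\alpha$ for a suitably small $s>0$ --- note $\tilde\alpha \in \SPACEalphaPosInt$ since $\bar\alpha \in \SPACEalphaPosInt$, and $\regfMARG[s\bar\alpha]{\freevar} = s\regfMARG[\bar\alpha]{\freevar}$ by $1$-homogeneity of the marginal in the weight --- and to exploit co-coercivity together with the $1$-strong convexity of $\Phi$ to force $\costf(u_{\tilde\alpha}) < \costf(\bar f)$. The $(K,p)$-minimality part of \eqref{eq:costf-tildeu} is not implied by the co-coercivity hypothesis alone; it is inherited from the structural subdifferential of $F$ in the intended applications, where $F = F_0 \circ K$ with smooth convex $F_0$ gives $\subdiff F(u) = K^*\nabla F_0(Ku)$ and so every $u$ is $(K,p)$-minimal with constant $C=0$.

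Let $v_{\tilde\alpha}:=Ku_{\tilde\alpha}$ and $\tilde f := f - t\varphi_{\bar f}$. Using $K\bar f = f$ from Remark~\ref{remark:barf}, the $(K,2)$-co-coercivity of $F$ at $(\bar f, K^*\varphi_{\bar f})$ delivers
\[
\costf(u_{\tilde\alpha}) - \costf(\bar f) \le \iprod{\varphi_{\bar f}}{v_{\tilde\alpha} - f} + \frac{C}{2}\norm{v_{\tilde\alpha} - f}_Y^2.
\]
The assumption $tC\le 1$ lets me bound $C/2$ by $1/(2t)$ and complete the square via the identity $\iprod{\varphi_{\bar f}}{w} + \frac{1}{2t}\norm{w}^2 = \frac{1}{2t}\norm{w+t\varphi_{\bar f}}^2 - \frac{t}{2}\norm{\varphi_{\bar f}}^2$ with $w := v_{\tilde\alpha}-f$, yielding
\[
\costf(u_{\tilde\alpha}) - \costf(\bar f) \le \frac{1}{2t}\norm{v_{\tilde\alpha} - \tilde f}_Y^2 - \frac{t}{2}\norm{\varphi_{\bar f}}_Y^2.
\]

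The next step is to control $\norm{v_{\tilde\alpha}-\tilde f}^2$ using $1$-strong convexity of $\Phi$. The minimisation of $\Jorig[\tilde\alpha]{\freevar}$ reduces via marginalisation \eqref{eq:regfmarg} to $v_{\tilde\alpha}$ minimising $\Phi(\freevar) + s\regfMARG{\freevar}$ on $Y$; strong convexity of this sum at $v_{\tilde\alpha}$ tested against $v=\tilde f$ gives
\[
\tfrac12\norm{v_{\tilde\alpha} - \tilde f}^2 + \Phi(v_{\tilde\alpha}) + s\regfMARG{v_{\tilde\alpha}} \le \Phi(\tilde f) + s\regfMARG{\tilde f}.
\]
Substituting $\Phi(\tilde f) = \frac{t^2}{2}\norm{\varphi_{\bar f}}^2$ into the previous display produces the exact cancellation of the $\pm\frac{t}{2}\norm{\varphi_{\bar f}}^2$ quadratics, and dropping the nonnegative $\Phi(v_{\tilde\alpha})/t$ leaves the clean estimate
\[
\costf(u_{\tilde\alpha}) - \costf(\bar f) \le \frac{s}{t}\bigl(\regfMARG{\tilde f} - \regfMARG{v_{\tilde\alpha}}\bigr).
\]

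Finally I will argue the right-hand side is strictly negative for all sufficiently small $s>0$. Testing the minimality at $v=f$ (legal since $\regfMARG{f}<\infty$, the case relevant to applications) gives $\norm{v_{\tilde\alpha}-f}^2 \le 2s\regfMARG{f}$, so $v_{\tilde\alpha}\to f$ strongly in $Y$ as $s\downto 0$. Since $f, v_{\tilde\alpha} \in \range{K}$, Lemma~\ref{lemma:t-lsc} gives $\regfMARG{v_{\tilde\alpha}} \to \regfMARG{f}$, and by hypothesis \eqref{eq:regfix-interior-condition-barf} this limit strictly exceeds $\regfMARG{\tilde f}$. Hence $\regfMARG{\tilde f}-\regfMARG{v_{\tilde\alpha}} < 0$ for small $s$, delivering $\costf(u_{\tilde\alpha}) < \costf(\bar f)$. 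The main technical obstacle is orchestrating the completion of squares with precisely the sharp constant $1/(2t)$ enabled by $tC\le 1$: this is what produces the exact cancellation of the quadratic $\norm{\varphi_{\bar f}}^2$ contributions, reducing the sign question to the strict regulariser inequality provided by \eqref{eq:regfix-interior-condition-barf}.
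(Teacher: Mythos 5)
Your argument is correct, and it reaches the conclusion by a genuinely different route than the paper. The paper works non-asymptotically at $\tilde\alpha=t\bar\alpha$ (the same $t$ as in \eqref{eq:regfix-interior-condition-barf}): it writes down the first-order optimality condition \eqref{eq:subproblem-opt-l2} for the lower-level problem, transfers the subgradient to the marginal via Lemma \ref{lemma:regfmarg-subdiff}, and chains subdifferential inequalities to arrive at the single estimate $\costf(\bar f)-\costf(u_{t\bar\alpha}) \ge \regfMARG{f}-\regfMARG{f-t\varphi_{\bar f}}+\frac{t^{-1}-C}{2}\norm{Ku_{t\bar\alpha}-f}_Y^2$, whose positivity is immediate from $t\le 1/C$ and \eqref{eq:regfix-interior-condition-barf}. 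You instead take a decoupled weight $s\bar\alpha$ with $s\downto 0$, avoid all subdifferential calculus for the regulariser, and use only the quadratic growth of the $1$-strongly convex marginalised objective at its minimiser together with a completion of squares (where $t\le 1/C$ enters in exactly the same role, sharpening $C/2$ to $1/(2t)$). What your route buys: it bypasses Lemma \ref{lemma:regfmarg-subdiff} and the need to apply $\invK$ to $\varphi_{\bar f}$ (the paper's expression $\bar f - t\invK\varphi_{\bar f}$ tacitly needs $\varphi_{\bar f}\in\range{K}$, whereas you work with $\regfMARG{f-t\varphi_{\bar f}}$ directly, matching the hypothesis as stated). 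What it costs: you need the limit $s\downto 0$ and hence the continuity of $\regfMARG{\freevar}$ along strongly convergent sequences in $\range{K}$ (Lemma \ref{lemma:t-lsc}), which the paper's direct argument does not invoke here; amusingly, your limiting strategy is closer in spirit to what the paper does later in Lemma \ref{lemma:modcost} for the $L^1$-type cost. Your explicit caveat about the $(K,p)$-minimality clause of \eqref{eq:costf-tildeu} is fair and puts you on the same footing as the paper, which likewise leaves that clause to be verified in the applications (where every $u\in X$ is $(K,p)$-minimal); one could even say you are more candid about it than the printed proof.
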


Here we recall the definition of $\regfMARG{\freevar}$ from \eqref{eq:regfmarg}.

\begin{proof}
    Let $\alpha \in \SPACEalphaPosInt$. We have from the co-coercivity \eqref{eq:co-coercive} that
    \[
        \costf(\bar f) - \costf(u_\alpha) \ge -\dprod{K^* \varphi_{\bar f}}{u_\alpha-\bar f} - \frac{C}{2}\norm{K u_\alpha-f}_Y^2.
        %\costf(f) - \costf(\costK u_\alpha) \ge -\dprod{\varphi_{f}}{\costK u_\alpha-f} - \frac{C}{2}\norm{\costK u_\alpha-f}_{\SPACEkuLtwo}^2.
    \]
    Using the definition of the subdifferential, 
    \[
        \costf(u)-\costf(\bar f) \ge \dprod{K^* \varphi_{\bar f}}{u-\bar f}, \quad (u \in X).
        %\costf(v)-\costf(f) \ge \dprod{\varphi_{f}}{v-f}, \quad (v \in Y).
    \]
    Summing, therefore
    \begin{equation}
        \label{eq:costf-l2l2-est0.5}
        \costf(u)-\costf(u_\alpha) \ge \dprod{K^*\varphi_{\bar f}}{u-u_\alpha} - \frac{C}{2}\norm{K u_\alpha-f}_Y^2, \quad (u \in X).
        %\label{eq:costf-l2l2-est1}
        %\costf(v)-\costf(\costK u_\alpha) \ge \dprod{\varphi}{v-\costK u_\alpha} - \frac{C}{2}\norm{\costK u_\alpha-f}_{\SPACEkuLtwo}^2, \quad (v \in Y).
    \end{equation}
    Setting $u=\bar f$, we deduce
    \begin{equation}
        \label{eq:costf-l2l2-est1}
        \costf(\bar f)-\costf(u_\alpha) \ge \dprod{\varphi_{\bar f}}{f-K u_\alpha} - \frac{C}{2}\norm{K u_\alpha-f}_Y^2.
    \end{equation}

    Let $\alpha=t\bar\alpha$ for some $t>0$.
    Since $\Phi \circ K$ is continuous with $\Dom (\Phi \circ K)=X$, the optimality conditions for $u_\alpha$ solving \eqref{eq:denoise-numerical-single} state \cite[Proposition I.5.6]{ekeland1999convex}
    \begin{equation}
        \label{eq:subproblem-opt-l2}
        0 \in K^* (K u_{t\bar\alpha} - f) + t A^* [\subdiff \regforig[\bar\alpha]{\freevar}](A u_{t\bar\alpha}).
    \end{equation}
    Because $u_{t\bar\alpha}$ solves \eqref{eq:denoise-numerical-single}, we have $\regforig[\bar\alpha]{Au_{t\bar\alpha}}=\regfMARG{Ku_{t\bar\alpha}}$.
    By Lemma \ref{lemma:regfmarg-subdiff} below, therefore
    \begin{equation}
        \label{eq:subproblem-opt-l2-t}
        0 \in K^* (K u_{t\bar\alpha} - f) + t \psi_t,
        \quad
        \psi_t \in [\subdiff \regfMARG[\bar\alpha]{K\freevar}](u_{t\bar\alpha}).
    \end{equation}
    Multiplying by $\barK$ we deduce $f-Ku_{t\bar\alpha} = t \barK \psi_t$, so that referring back to \eqref{eq:costf-l2l2-est1}, and using the definition of the subdifferential, we get for any $t > 0$ the estimate
    \begin{equation}
        \notag
        \begin{split}
        \costf(\bar f)-\costf(u_{t\bar\alpha}) 
        &
        \ge
        \dprod{t\invK\varphi_{\bar f}}{\psi_t}
        - \frac{C}{2}\norm{Ku_{t\bar\alpha}-f}_Y^2
        %\\
        %&
        %=
        %\dprod{u_{t\bar\alpha}-(u_{t\bar\alpha}-t\invK\varphi_{\bar f})}{\psi_t}
        %- \frac{C}{2}\norm{Ku_{t\bar\alpha}-f}_Y^2
        \\
        &
        =
        \dprod{u_{t\bar\alpha}-(\bar f-t\invK\varphi_{\bar f})}{\psi_t}
        +
        \dprod{\bar f-u_{t\bar\alpha}}{\psi_t}
        - \frac{C}{2}\norm{Ku_{t\bar\alpha}-f}_Y^2
        %\\
        %&
        %\ge \regfMARG[\bar\alpha]{K u_{t\bar\alpha}} 
        %- \regfMARG[\bar\alpha]{K(\bar f - t \invK \varphi_{\bar f})}
        %+ \left(\frac{1}{t} - \frac{C}{2}\right)\norm{Ku_{t\bar\alpha}-f}_Y^2.
        \\
        &
        \ge \regfMARG[\bar\alpha]{K u_{t\bar\alpha}} 
        - \regfMARG[\bar\alpha]{K(\bar f - t \invK \varphi_{\bar f})}
        + \dprod{\bar f-u_{t\bar\alpha}}{\psi_t}
        - \frac{C}{2}\norm{Ku_{t\bar\alpha}-f}_Y^2.
        \end{split}
    \end{equation}
    Since $u_{t\bar\alpha}$ solves \eqref{eq:denoise-numerical-single} for $\alpha=t\bar\alpha$, using \eqref{eq:subproblem-opt-l2-t}, we have
    \[
        \dprod{\bar f-u_{t\bar\alpha}}{\psi_t}
        =\frac{1}{t}\norm{Ku_{t\bar\alpha}-f}_Y^2
        \ge 2\left(\regfMARG[\bar\alpha]{f} - \regfMARG[\bar\alpha]{K u_{t\bar\alpha}}\right).
    \]
    It follows
    \begin{equation}
        \label{eq:cocost-est1}
        \costf(\bar f)-\costf(u_{t\bar\alpha}) 
        \ge \regfMARG[\bar\alpha]{K \bar f} 
        - \regfMARG[\bar\alpha]{K(\bar f - t \invK \varphi_{\bar f})}
        + \frac{\inv t -C}{2}\norm{Ku_{t\bar\alpha}-f}_Y^2.
    \end{equation}
    %
    % By Lemma \ref{lemma:regfmarg-subdiff-non-empty} below, we deduce the existence of some $\psi_0 \in [\subdiff \regfMARG[\bar\alpha]{K\freevar}](\bar f)$.
    % Using the monotonicity of $\subdiff \regfMARG[\bar\alpha]{K\freevar}$, we therefore establish \TODO{No this is bullshit, the signs are wrong!!}
    % \begin{equation}
    %     \label{eq:cocost-est1}
    %     \costf(\bar f)-\costf(u_{t\bar\alpha}) 
    %     \ge \regfMARG[\bar\alpha]{K \bar f} 
    %     - \regfMARG[\bar\alpha]{K(\bar f - t \invK \varphi_{\bar f})}
    %     - \frac{C}{2}\norm{Ku_{t\bar\alpha}-f}_Y^2.
    % \end{equation}
    %
    % Because $u_{t\bar\alpha}$ solves \eqref{eq:denoise-numerical-single}, we have
    % \[
    %     %-\frac{C}{2}\norm{\costK u_\alpha-f}_{\SPACEkuLtwo}^2
    %     %\ge -C\alpha\left(\regfFIX{\bar f}-\regfFIX{A u_\alpha}\right)
    %     -\frac{1}{2}\norm{Ku_{t\bar\alpha}-f}_Y^2
    %     \ge -t\left(\regforig[\bar\alpha]{\bar f}-\regforig[\bar\alpha]{A u_{t\bar\alpha}}\right).
    % \]
    % Lemma \ref{lemma:alpha-to-zero} shows that $Ku_{t\bar\alpha} \to f$ strongly in $Y$ as $t \downto 0$. Since $\regforigSYM$ is lower semicontinuous, we therefore deduce
    % \[
    %     \liminf_{t \downto 0} -\frac{C}{2t}\norm{Ku_{t\bar\alpha}-f}_Y^2 \ge 0.
    % \]
    % Thus \eqref{eq:cocost-est1} gives
    % \begin{equation}
    %     %\label{eq:costf-estim-regf-1}
    %     \notag
    %     \begin{split}
    %     \costf(\bar f)-\costf(u_{t\bar\alpha})
    %     \ge
    %     \regfMARG[\bar\alpha]{f} 
    %     - \regfMARG[\bar\alpha]{f - t \varphi_{\bar f}},
    %     \end{split}
    % \end{equation}
    % and
    We see that \eqref{eq:regfix-interior-condition-barf} implies \eqref{eq:costf-tildeu} if $0 < t \le \inv C$.
\end{proof}

\begin{lemma}   
    \label{lemma:regfmarg-subdiff}
    Suppose $\psi \in [\subdiff \regforig[\bar\alpha]{\freevar}](Au)$ with $A^*\psi \in \range{K^*}$, and that $\regforig[\bar\alpha]{Au}=\regfMARG{Ku}$. Then
    $A^*\psi \in [\subdiff \regfMARG[\bar\alpha]{K\freevar}](u)$
\end{lemma}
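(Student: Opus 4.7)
The plan is to unpack both subdifferentials from first principles and exploit the range condition $A^{*}\psi \in \range{K^{*}}$ to push the subgradient through the marginalisation. Concretely, pick $\varphi \in Y^{*}$ with $A^{*}\psi = K^{*}\varphi$. From the assumption $\psi \in [\subdiff \regforig[\bar\alpha]{\freevar}](Au)$, the defining inequality reads
\[
    \regforig[\bar\alpha]{A\tilde u} - \regforig[\bar\alpha]{Au} \ge \iprod{\psi}{A\tilde u - Au} = \iprod{A^{*}\psi}{\tilde u - u}
    \quad (\tilde u \in X).
\]
The crucial observation is that when $\tilde u$ ranges over the fibre $\{\tilde u \mid K\tilde u = Ku'\}$ for some fixed $u' \in X$, the right-hand side becomes $\iprod{K^{*}\varphi}{\tilde u - u} = \iprod{\varphi}{Ku' - Ku}$, which no longer depends on the particular $\tilde u$ chosen in the fibre.

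Next, I take the infimum over this fibre. Using the definition of $\regfMARG[\bar\alpha]{\freevar}$ from \eqref{eq:regfmarg}, this gives
\[
    \regfMARG[\bar\alpha]{Ku'} - \regforig[\bar\alpha]{Au} \ge \iprod{\varphi}{Ku' - Ku}.
\]
At this point the hypothesis $\regforig[\bar\alpha]{Au}=\regfMARG[\bar\alpha]{Ku}$, which says that the infimum in the definition of $\regfMARG[\bar\alpha]{\freevar}$ is attained at $u$ (i.e.\ $u$ is a recovery sequence for its own image $Ku$), lets me replace $\regforig[\bar\alpha]{Au}$ by $\regfMARG[\bar\alpha]{Ku}$ on the left, yielding
\[
    \regfMARG[\bar\alpha]{Ku'} - \regfMARG[\bar\alpha]{Ku} \ge \iprod{\varphi}{Ku' - Ku} = \iprod{K^{*}\varphi}{u' - u} = \iprod{A^{*}\psi}{u' - u}.
\]
Since $u' \in X$ is arbitrary, this is precisely the statement $A^{*}\psi \in [\subdiff \regfMARG[\bar\alpha]{K\freevar}](u)$.

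There is no real obstacle: the only sleight of hand is the range condition $A^{*}\psi \in \range{K^{*}}$, which is exactly what is needed to ensure the pairing $\iprod{A^{*}\psi}{\tilde u - u}$ is constant along the fibre $K\tilde u = Ku'$ and therefore survives the passage to the infimum. The attainment condition $\regforig[\bar\alpha]{Au}=\regfMARG[\bar\alpha]{Ku}$ then closes the gap between the unconstrained and the marginalised regularisers at the reference point. I do not need to invoke any further regularity of $A$, $K$, $\regforig[\bar\alpha]{\freevar}$ or duality pairings beyond what is already built into the setup of Assumption~\ref{ass:a-k}.
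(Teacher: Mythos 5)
Your proof is correct and follows essentially the same route as the paper's: both write $A^*\psi = K^*\varphi$ using the range hypothesis, observe that the subgradient pairing is constant along each fibre $K\tilde u = Ku'$, take the infimum over the fibre to obtain $\regfMARG[\bar\alpha]{Ku'}$ on the left, and invoke the attainment hypothesis $\regforig[\bar\alpha]{Au}=\regfMARG[\bar\alpha]{Ku}$ to close the argument. No substantive difference from the paper's proof.
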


\begin{proof}
    Let $\lambda \in Y$ be such that $K^*\lambda=A^*\psi$.
    By the definition of the subdifferential, we have
    \[
        \regforig[\bar\alpha]{Au''}-\regforig[\bar\alpha]{Au} \ge \iprod{\lambda}{K(u'-u)}, \quad (u'' \in X).
    \]
    Minimising over $u'' \in X$ with $Ku''=Ku'$ for some $u' \in X$, and using $\regforig[\bar\alpha]{Au}=\regfMARG{Ku}$, we deduce
    \[
        \regfMARG{Ku'}
        -
        \regfMARG{Ku} \ge \iprod{\lambda}{K(u'-u)}, \quad (u' \in X).
    \]
    Thus
    \[
        \regfMARG{Ku'}
        -
        \regfMARG{Ku} \ge \iprod{A^*\psi}{u'-u}, \quad (u' \in X).
    \]
    This proves the claim.
\end{proof}

Summarising the developments so far, we may state:

\begin{proposition}
    \label{prop:conditions-squared-squared}
    Suppose Assumption \ref{ass:a-k} hold \ref{ass:aj-zeroterm-approx}. 
    Let $Y$ be a Hilbert space, $f \in Y \isect \range{K}$, and
    \[
        \Phi(v)=\frac{1}{2} \norm{f-v}_Y^2.
    \]
    If $\bar f$ is $(K, 2)$-minimal, $\costf$ is $(K, 2)$-co-coercive at $(\bar f, K^*\varphi_{\bar f})$, and
    \begin{equation}
        \label{eq:regfix-interior-condition-huber}
        \regfMARG{f} > \regfMARG{f-t \varphi_{\bar f}}
    \end{equation}
    for some $\bar\alpha \in \SPACEalphaPosInt$ and $t \in (0, 1/C]$, then the claims of Proposition \ref{prop:conditions-onealpha} hold.
\end{proposition}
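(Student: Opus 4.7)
The plan is to specialise Proposition \ref{prop:conditions-onealpha} to the current $L^2$-squared fidelity setting, with Lemma \ref{lemma:cocoercive} discharging the condition \eqref{eq:costf-tildeu}.

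First, I would verify Assumption \ref{ass:phi} for $\Phi(v) = \frac{1}{2}\norm{f-v}_Y^2$ on the Hilbert space $Y$. Convexity, strong convexity, continuity (hence weak* lower semicontinuity by convexity), coercivity, and $0 \in \Dom\Phi$ are all immediate. The unique minimiser of $\Phi$ is $f \in \range{K}$ by hypothesis, so the required representative $\bar f$ exists---indeed $\bar f = \invK f$ works by Remark \ref{remark:barf}. Strong convexity together with continuity removes the need for $K$ to be compact.

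Second, the remaining hypotheses of the present proposition---Assumptions \ref{ass:a-k} and \ref{ass:aj-zeroterm-approx}, the Hilbert structure of $Y$, the squared $L^2$ form of $\Phi$, the $(K,2)$-minimality of $\bar f$, the $(K,2)$-co-coercivity of $F$ at $(\bar f, K^*\varphi_{\bar f})$, and the strict inequality \eqref{eq:regfix-interior-condition-huber}---coincide exactly with those of Lemma \ref{lemma:cocoercive}. Applying the lemma yields an interior parameter $\tilde\alpha = t\bar\alpha \in \SPACEalphaPosInt$ and a minimiser $u_{\tilde\alpha}$ of $\Jorig[\tilde\alpha]{\freevar}$ for which $F(u_{\tilde\alpha}) < F(\bar f)$; this strict inequality is immediate from \eqref{eq:cocost-est1} upon inserting \eqref{eq:regfix-interior-condition-huber} and using $t \in (0, 1/C]$. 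Together with Assumption \ref{ass:phi}, this realises condition \eqref{eq:costf-tildeu}, and Proposition \ref{prop:conditions-onealpha} then delivers both conclusions (i) and (ii) verbatim.

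The one delicate point I would want to pin down carefully is that the auxiliary minimiser $u_{\tilde\alpha}$ carries the $(K, p)$-minimality demanded by \eqref{eq:costf-tildeu}. I would extract this from the inner optimality system \eqref{eq:subproblem-opt-l2} at $\tilde\alpha$: rearranging produces $\psi \in \subdiff \regforig[\bar\alpha]{A u_{\tilde\alpha}}$ with $A^*\psi \in \range{K^*}$, which by Lemma \ref{lemma:regfmarg-subdiff} becomes a subgradient of the marginal regulariser of the form $K^*\varphi$. Combined with the structure of $F$ as a convex function composed with $K_0$ dominated by $K$ through \eqref{eq:k0-condition}, this produces a genuine subgradient of $F$ at $u_{\tilde\alpha}$ of the required form, so the minimality inequality holds with constant zero. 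Beyond this bookkeeping step the proof is immediate.
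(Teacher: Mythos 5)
Your overall route is exactly the paper's: check Assumption \ref{ass:phi} for the squared $L^2$ fidelity on a Hilbert space (strong convexity replacing compactness of $K$), invoke Lemma \ref{lemma:cocoercive} to discharge condition \eqref{eq:costf-tildeu} with $\tilde\alpha=t\bar\alpha$, and then apply Proposition \ref{prop:conditions-onealpha} (together with Propositions \ref{prop:existence-compact} and \ref{prop:gamma-convergence} for the existence and outer-semicontinuity conclusions). Up to that point the proposal is correct and coincides with the paper's proof.

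The problem is your closing paragraph on the $(K,p)$-minimality of $u_{\tilde\alpha}$. You are right to flag it --- \eqref{eq:costf-tildeu} does demand it, and the proof of Lemma \ref{lemma:cocoercive} only establishes the strict inequality $\costf(u_{t\bar\alpha})<\costf(\bar f)$ --- but the fix you sketch does not work. The inner optimality system \eqref{eq:subproblem-opt-l2} is the first-order condition for the \emph{reconstruction} problem: it relates the gradient of $\Phi\circ K$ to $\subdiff\regforig[\bar\alpha]{\freevar}$ at $Au_{t\bar\alpha}$, and Lemma \ref{lemma:regfmarg-subdiff} converts the latter into a subgradient of the marginal regulariser $\regfMARG[\bar\alpha]{K\freevar}$. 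None of this says anything about the cost $\costf$, so it cannot produce ``a genuine subgradient of $F$ at $u_{\tilde\alpha}$''; $(K,p)$-minimality of $u_{\tilde\alpha}$ is a property of $\costf$ alone at that point. Moreover, the composite structure $F=F_0\circ K_0$ and the domination condition \eqref{eq:k0-condition} that you invoke are hypotheses of Theorem \ref{thm:l2cost-main}, not of this abstract proposition. The way this point is actually closed in the paper is downstream, in the concrete applications: for $F(u)=\frac{1}{2}\norm{K_0u-f_0}_Z^2$ one shows in the proof of Theorem \ref{thm:l2cost-main} that \emph{every} $u\in X$ is $(K,2)$-minimal, so the minimality of $u_{\tilde\alpha}$ is automatic there. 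At the level of Proposition \ref{prop:conditions-squared-squared} itself you should either add the $(K,p)$-minimality of $u_{t\bar\alpha}$ as a hypothesis or note that it is inherited from a global minimality property of $\costf$; the derivation from the inner optimality system should be deleted.
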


\begin{proof}
    It is easily checked that Assumption \ref{ass:phi} holds.
    Lemma \ref{lemma:cocoercive} then verifies the remaining conditions of Proposition \ref{prop:conditions-onealpha}, which shows the existence of $\mathcal{K}$ in both cases. Finally, Proposition \ref{prop:existence-compact} shows the existence of $\hat \alpha \in \SPACEalphaCompactPosInt$ solving \eqref{eq:learn-numerical-single}.
    For the continuity of the solution map, we refer to Proposition \ref{prop:gamma-convergence}.
\end{proof}

\subsection{$L^2$-squared fidelity with $L^2$-squared cost}

We may finally finish the proof of our main result on the $L^2$ fidelity $\Phi(v) \defeq \frac{1}{2}\norm{f-v}_Y^2$, $Y=\SPACEkuLtwo$, with the $L^2$-squared cost functional $F(u)=\frac{1}{2}\norm{K_0u-f_0}_Z^2$.

\begin{proof}[Proof of Theorem \ref{thm:l2cost-main}]
    We have to verify the conditions of Proposition \ref{prop:conditions-squared-squared}, primarily the $(K, 2)$-minimality of $\bar f$, the $K$-cocoercivity of $F$, and \eqref{eq:regfix-interior-condition-huber}.
    Regarding minimality and co-coercivity, we write $F=F_0 \circ K_0$, where $F_0(v)=\frac{1}{2}\norm{v-f_0}_Z^2$. Then for any $v, v' \in Z$, we have
    \[
        F_0(v')-F_0(v)
        =
        \iprod{v'-v}{v-f_0}
        +
        \frac{1}{2}\norm{v'-v}_{L^2(\Omega)}^2.
    \]    
    From this $(I, 2)$-co-coercivity of $F_0$ with $C=1$ is clear, as is the $(I, 2)$-minimality with regard to $F_0$ of every $v \in Y$.
    By extension, $F$ is easily seen to be $(K_0, 2)$-co-coercive, and every $u \in X$ $(K_0, 2)$-minimal. Using \eqref{eq:k0-condition}, $(K, 2)$-co-coercivity of $F$ with $C=C_0$ is immediate, as is the $(K, 2)$-minimality of every $u \in X$.
    
    Regarding \eqref{eq:regfix-interior-condition-huber}, we need to find $\varphi_{\bar f}$ such that $K^* \varphi_{\bar f}=\grad F(\bar f)$.
    We have
    \[
        K_0^*(K_0\bar f-f_0)=\grad F(\bar f).
    \]
    From this we observe that $\varphi_{\bar f}$ exists, because \eqref{eq:k0-condition}
    implies $\nullspace{K} \subset \nullspace{K_0}$, and hence $\range{K^*} \subset \range{K_0^*}$.
    Here $\mathcal{N}$ and $\mathcal{R}$ stand for the nullspace and range, respectively.
    Setting $K^* \varphi_{\bar f}=K_0^*(K_0\bar f-f_0)$ and using $K \invK=I$ on $\range{K}$, we thus find that
    \[
        \varphi_{\bar f}=(K_0 \invK)^*(K_0\bar f-f_0).
    \]
    Observe that since $\nullspace{K} \subset \nullspace{K_0}$, this expression does not depend on the choice of $\bar f \in \inv K f$. Following Remark \ref{remark:barf}, we can replace $\bar f=\invK f$. It follows that \eqref{eq:l2cost-interior-condition} implies \eqref{eq:regfix-interior-condition-huber}.
\end{proof}

\begin{remark}
    \label{remark:phi-pseudol2}
    Provided that $\Phi$ satisfies Assumption \ref{ass:phi}, \ref{ass:aj-zeroterm-approx}, and \ref{ass:sol-smoothness}, it is easy to extend Lemma \ref{lemma:cocoercive} and consequently Theorem \ref{thm:l2cost-main} to the case
    \[
        \Phi(v)=\frac{1}{2}\norm{v}_{\hat Y}^2 - \dprod{f}{v}_{Y^*,Y},
        \quad (v \in Y),
    \]
    where $\hat Y \supset Y$ is a Hilbert space, $f \in Y^*$, and $Y$ still a reflexive Banach space. As $Y \subset \hat Y=\hat Y^* \subset Y^*$, in this case, we still have
    \[
        \grad\Phi(v)=v-f \in Y^*.
    \]
    In particular
    \[
        \grad\Phi(Ku)=K^*(Ku-f) \in X^*.
    \]
    Therefore the expression \eqref{eq:subproblem-opt-l2} still holds, which is the only place where we needed the specific form of $\Phi$.
\end{remark}

\begin{example}[Bingham flow]
    \label{ex:bingham-condition}
    As a particular case of this remark, we take $\hat Y=Y=H_0^1(\Omega)$. Then $Y^*=H^{-1}(\Omega)$. With $f \in L^2(\Omega)$, the Riesz representation theorem allows us to write
    \[
        \int_\Omega f v \d x=\dprod{\tilde f}{v}_{H^{-1}(\Omega),H_0^1(\Omega)},
    \]
    for some $\tilde f \in H^{-1}(\Omega)$, which we may identify with $f$.
    Therefore, Theorem \ref{thm:l2cost-main} can be extended to cover the Bingham flow
    of Example \ref{ex:bingham}. In particular, we get the same condition for interior
    solutions as in Corollary \ref{corollary:tv-l2cost-interior}, namely
    \[
        \TV(f)
        >
        \TV(f_0).
    \]
\end{example}

%\begin{comment}
\subsection{A more general technique for the $L^2$-squared fidelity}

We now study another technique that does not require $(K, 2)$-minimality and $(K, 2)$-co-coercivity at $\bar f$. We still however require $\Phi$ to be the $L^2$-squared fidelity, and $u_{\doublebar\alpha}$ to be $(K,p)$-minimal.

\begin{lemma}[Natural conditions for the general $L^2$-squared case]
    \label{lemma:conditions-general-squared}
    Suppose Assumption \ref{ass:a-k} and \ref{ass:aj-zeroterm-approx} hold.
    Let $Y$ a Hilbert space, $f \in Y \isect \range{K}$, and
    \[
        \Phi(v)=\frac{1}{2} \norm{f-v}_Y^2.
    \]
    %Consider the problem \eqref{eq:learn-numerical-single} with $\epsilon=0$ and $\phi_1(x, z) \defeq \frac{1}{2}\norm{f(x)-z}^2$ on a one-dimensional sub-cone $\SPACEalphaCompactPos=[0,\infty]\bar \alpha$, ($\bar\alpha \in (0, \infty)^\NA$).
    The claims of Proposition \ref{prop:conditions-onealpha} hold if for some $\bar\alpha \in \SPACEalphaPosInt$ and $t > 0$ both $\bar f$ and the solution $u_{\bar \alpha}$ are $(K, p)$-minimal and
    \begin{equation}
        \label{eq:general-interior-ass2t}
        \regfMARG[\bar\alpha]{Ku_{\bar \alpha}} > \regfMARG[\bar\alpha]{f-t \varphi_{u_{\bar\alpha}}}.
    \end{equation}
\end{lemma}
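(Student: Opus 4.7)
The plan is to verify the hypothesis \eqref{eq:costf-tildeu} of Proposition \ref{prop:conditions-onealpha} with the choice $\tilde\alpha = \bar\alpha$. The $(K,p)$-minimality of $u_{\tilde\alpha} = u_{\bar\alpha}$ is already among the assumptions of Lemma \ref{lemma:conditions-general-squared}, so the only remaining task is to establish the strict inequality $F(u_{\bar\alpha}) < F(\bar f)$. The argument will mirror that of Lemma \ref{lemma:cocoercive}, with the $(K,p)$-minimality of $u_{\bar\alpha}$ playing the role that the $(K,2)$-co-coercivity of $F$ played there.

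First, apply the $(K,p)$-minimality of $u_{\bar\alpha}$ at $u = \bar f$; using $K\bar f = f$ this yields
\[
    F(\bar f) - F(u_{\bar\alpha}) \ge \dprod{\varphi_{u_{\bar\alpha}}}{f - Ku_{\bar\alpha}} - \frac{C}{p}\norm{f - Ku_{\bar\alpha}}_Y^p.
\]
To bound the linear term from below, I use the first-order optimality for the $L^2$-squared inner problem: $K^*(f - Ku_{\bar\alpha}) = A^*\xi$ with $\xi \in \subdiff \regforig[\bar\alpha]{Au_{\bar\alpha}}$. Since $u_{\bar\alpha}$ is an inner minimizer, $\regforig[\bar\alpha]{Au_{\bar\alpha}} = \regfMARG[\bar\alpha]{Ku_{\bar\alpha}}$, and Lemma \ref{lemma:regfmarg-subdiff} promotes this to $K^*(f - Ku_{\bar\alpha}) \in [\subdiff \regfMARG[\bar\alpha]{K\freevar}](u_{\bar\alpha})$. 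Choose $\bar u \in X$ with $K\bar u = f - t\varphi_{u_{\bar\alpha}}$; such $\bar u$ exists because the finite value of $\regfMARG[\bar\alpha]{f - t\varphi_{u_{\bar\alpha}}}$ forced by \eqref{eq:general-interior-ass2t} implies $f - t\varphi_{u_{\bar\alpha}} \in \range{K}$. Applying the subgradient inequality at $\bar u$ and expanding $\dprod{K^*(f - Ku_{\bar\alpha})}{\bar u - u_{\bar\alpha}} = \dprod{f - Ku_{\bar\alpha}}{f - t\varphi_{u_{\bar\alpha}} - Ku_{\bar\alpha}}$ rearranges to
\[
    \dprod{\varphi_{u_{\bar\alpha}}}{f - Ku_{\bar\alpha}} \ge \frac{1}{t}\bigl(\norm{f - Ku_{\bar\alpha}}_Y^2 + \regfMARG[\bar\alpha]{Ku_{\bar\alpha}} - \regfMARG[\bar\alpha]{f - t\varphi_{u_{\bar\alpha}}}\bigr).
\]

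Writing $\rho := \norm{f - Ku_{\bar\alpha}}_Y$ and $\delta := \regfMARG[\bar\alpha]{Ku_{\bar\alpha}} - \regfMARG[\bar\alpha]{f - t\varphi_{u_{\bar\alpha}}}$, combining the two displays gives
\[
    F(\bar f) - F(u_{\bar\alpha}) \ge \frac{\delta + \rho^2}{t} - \frac{C}{p}\rho^p,
\]
with $\delta > 0$ by \eqref{eq:general-interior-ass2t}. First I rule out $\rho = 0$: if $Ku_{\bar\alpha} = f$ the optimality relation reduces to $0 \in [\subdiff \regfMARG[\bar\alpha]{K\freevar}](u_{\bar\alpha})$, which would force $\regfMARG[\bar\alpha]{f} = \regfMARG[\bar\alpha]{Ku_{\bar\alpha}} \le \regfMARG[\bar\alpha]{f - t\varphi_{u_{\bar\alpha}}}$, contradicting \eqref{eq:general-interior-ass2t}. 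The main technical obstacle is then to absorb the concave-in-$\rho$ penalty $-C\rho^p/p$: for $p = 2$ this is immediate whenever $t \le 2/C$, while for general $p$ one invokes the sub-optimality bound $\rho^2/2 \le \regforig[\bar\alpha]{A\bar f}$ (from $J(u_{\bar\alpha}) \le J(\bar f)$) to bound $\rho^{p-2}$ by a constant, so $C\rho^p/p$ is controlled by a multiple of $\rho^2$ and the positive $\rho^2/t$ dominates provided $t$ is small enough (or $\delta$ large enough). Once $F(u_{\bar\alpha}) < F(\bar f)$ is established, Proposition \ref{prop:conditions-onealpha} immediately delivers both conclusions.
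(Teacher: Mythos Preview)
Your overall plan---verify \eqref{eq:costf-tildeu} with $\tilde\alpha=\bar\alpha$ by showing $F(u_{\bar\alpha})<F(\bar f)$, using the optimality condition for the inner problem together with Lemma~\ref{lemma:regfmarg-subdiff} and a subgradient inequality for $\regfMARG[\bar\alpha]{K\freevar}$---is exactly the paper's strategy, and your derivation of
\[
    \dprod{\varphi_{u_{\bar\alpha}}}{f-Ku_{\bar\alpha}}
    \ge \tfrac{1}{t}\bigl(\norm{f-Ku_{\bar\alpha}}_Y^2 + \regfMARG[\bar\alpha]{Ku_{\bar\alpha}} - \regfMARG[\bar\alpha]{f-t\varphi_{u_{\bar\alpha}}}\bigr)
\]
is correct and matches the paper's computation.

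The gap is in how you bound $F(\bar f)-F(u_{\bar\alpha})$ from below. You invoke the $(K,p)$-minimality inequality itself, which leaves you with the spurious penalty $-\tfrac{C}{p}\rho^p$ to absorb. Your absorption argument then requires ``$t$ small enough'', but $t>0$ is \emph{given} in the hypothesis \eqref{eq:general-interior-ass2t}; you are not free to shrink it, and the condition \eqref{eq:general-interior-ass2t} need not persist under replacement of $t$ by a smaller value. (The case $p<2$ is also problematic, since $\rho^{p-2}$ is unbounded as $\rho\downarrow 0$, so ``bound $\rho^{p-2}$ by a constant'' fails there.)

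The paper sidesteps this entirely: it uses that $K^*\varphi_{u_{\bar\alpha}}\in\subdiff F(u_{\bar\alpha})$ (the paper explicitly ``recalls'' this; in all the applications the $\varphi$ coming from $(K,p)$-minimality is in fact a genuine subgradient), so the plain convex subgradient inequality gives
\[
    F(\bar f)-F(u_{\bar\alpha}) \ge \dprod{\varphi_{u_{\bar\alpha}}}{f-Ku_{\bar\alpha}}
\]
with \emph{no} penalty term. Combined with your displayed lower bound and $\delta>0$ from \eqref{eq:general-interior-ass2t}, this immediately yields $F(\bar f)-F(u_{\bar\alpha})\ge \delta/t>0$, with no restriction on $t$. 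Replace your first display by this subgradient inequality and the rest of your argument goes through cleanly.
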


\begin{remark}
    Setting $\bar \alpha=s\doublebar\alpha$ in \eqref{eq:general-interior-ass2t}, we see employing the lower semicontinuity Lemma \ref{lemma:t-lsc} that the former is implied by
    \begin{equation}
        \label{eq:regfix-interior-condition-limiting}
        \regfMARG{f} > \limsup_{s \downto 0} \regfMARG{f-t \varphi_{s \doublebar\alpha}}.
    \end{equation}
    Here we use the shorthand $\varphi_{s \doublebar\alpha} \defeq \varphi_{u_{s \doublebar\alpha}}$.
    The difficulty is going to the limit, because we do not generally have any reasonable form of convergence of $\{\varphi_{s\doublebar\alpha}\}_{s > 0}$. If we did indeed have $\varphi_{s\doublebar\alpha} \to \varphi_{\doublebar f}$, then \eqref{eq:regfix-interior-condition-limiting} and consequently \eqref{eq:general-interior-ass2t-summary} would be implied by the condition \eqref{eq:regfix-interior-condition-huber} we derived using $(K, 2)$-co-coercivity.
    We will in the next subsection go to the limit with finite-dimensional functionals that are not $(K, 2)$-co-coercive and hence the earlier theory does not apply.
\end{remark}

\begin{proof}
    Let us observe that \eqref{eq:costf-tildeu} holds if for some $\bar \alpha > 0$ and $C>0$, we can find a $(K, p)$-minimal
    \[
        u_{\bar \alpha} \in \argmin_{u \in X} \Jorig[\bar \alpha]{u},
    \]
    satisfying
    \begin{equation}
        \label{eq:general-interior-ass0}
        \dprod{\varphi_{{\bar \alpha}}}{f-Ku_{\bar \alpha}} > 0.
    \end{equation}
    Here we denote for short $\varphi_{\bar \alpha} \defeq \varphi_{u_{\bar \alpha}}$, recalling that $K^* \varphi_{\bar \alpha} \in \subdiff F(u_{\bar \alpha})$.
    Indeed, by the definition of the subdifferential, the minimality of $u_{\bar \alpha}$, and \eqref{eq:general-interior-ass0}, we deduce
    \begin{equation}
        \label{eq:cost-ualpha-est}
        \costf(\bar f) \ge \costf(u_{\bar \alpha}) + \dprod{\varphi_{{\bar \alpha}}}{f-K u_{\bar \alpha}}
        >
        \costf(u_{\bar \alpha}).
    \end{equation}
    This shows \eqref{eq:costf-tildeu}.
    
    We need to show that that \eqref{eq:general-interior-ass2t} implies \eqref{eq:general-interior-ass0}.
    As in the proof of Lemma \ref{lemma:cocoercive}, we deduce by application of Lemma \ref{lemma:regfmarg-subdiff} that
    \begin{equation}
        \label{eq:psi-alpha-eq}
        0 \in K^* (K u_{\bar\alpha} - f) + \psi_{\bar\alpha},
        \quad\text{for some}\quad
        \psi_{\bar\alpha} \in [\subdiff \regfMARG[\bar\alpha]{K\freevar}](u_{\bar\alpha}).
    \end{equation}
    Then 
    \[
        \dprod{\bar f-u_{\bar \alpha}}{\psi_{\bar \alpha}}
        =
        \dprod{\bar f- u_{\bar \alpha}}{K^*(f-K u_{\bar \alpha})}
        =
        \norm{f- K u_{\bar \alpha}}_Y^2 \ge 0.
    \]
    Multiplying \eqref{eq:psi-alpha-eq} by $\barK$ and using this estimate, we deduce for any $t>0$ that
    \[
        \begin{split}
        \dprod{\varphi_{\bar \alpha}}{f-Ku_{\bar \alpha}}
        %\\
        &
        \ge
        \inv t \dprod{u_{\bar \alpha}-(u_{\bar \alpha}-t \invK \varphi_{\bar \alpha})}{\psi_{\bar \alpha}}
        \\
        & =
        \inv t
        \dprod{u_{\bar \alpha}-(\bar f- t \invK \varphi_{\bar \alpha})}{\psi_{\bar \alpha}}
        +\inv t \dprod{\bar f-u_{\bar \alpha}}{\psi_{\bar \alpha}}
        \\
        &
        \ge
        \inv t 
        \dprod{u_{\bar \alpha}-(\bar f- t \invK \varphi_{\bar \alpha})}{\psi_{\bar \alpha}}
        \\
        &
        \ge \inv t {\bar \alpha}
        \left(
            \regfMARG[\bar\alpha]{Ku_{\bar \alpha}}-\regfMARG[\bar\alpha]{K(\bar f-t\invK \varphi_{\bar \alpha})}
        \right).
        \end{split}
    \]
    The last step follows from the definition of $\subdiff \regfMARG[\bar\alpha]{K\freevar}$. This proves that \eqref{eq:general-interior-ass2t} implies \eqref{eq:general-interior-ass0}. %, hence showing \ref{item:interior-bounds-loweralt}.
\end{proof}

Summing up the developments so far, we may in contrast to Proposition \ref{prop:conditions-squared-squared} that depended on $\bar f$ and co-coercivity, state:

\begin{proposition}
    \label{prop:conditions-general-squared}
    Suppose Assumption \ref{ass:a-k} and \ref{ass:aj-zeroterm-approx} hold. 
    Let $Y$ be a Hilbert space, $f \in Y \isect \range{K}$, and
    \[
        \Phi(v)=\frac{1}{2} \norm{f-v}_Y^2.
    \]
    If for some $\doublebar\alpha \in \SPACEalphaPosInt$, $t>0$, the solution $u_{\doublebar\alpha}$ is $(K,p)$-minimal with
    \begin{equation}
        \label{eq:general-interior-ass2t-summary}
        \regfMARG[\doublebar\alpha]{Ku_{\doublebar \alpha}} > \regfMARG[\doublebar\alpha]{f-t \varphi_{u_{\doublebar \alpha}}},
    \end{equation}
    then there exist $\bar\gamma >0$ and $\bar\epsilon>0$ such that the following hold.
    \begin{enumroman}
        \item
        For each $\epsilon \in [0, \bar \epsilon]$ and $\gamma \in [\bar\gamma,\infty]$ there exists a compact set $\mathcal{K} \subset \SPACEalphaCompactPosInt$ such that \eqref{eq:interior-solution-compact} holds. In particular there exists a solution $\hat \alpha \in \SPACEalphaCompactPosInt$ to \eqref{eq:learn-numerical-single}.
        \item
        If, moreover,  Assumption \ref{ass:sol-smoothness} holds, there exists a compact set $\mathcal{K} \subset \SPACEalphaCompactPosInt$ such that \eqref{eq:interior-solution-compact-strong} holds and the solution map $\SolM$ is outer semicontinuous within $[\bar\gamma,\infty] \times [0, \bar \epsilon]$.
    \end{enumroman}
\end{proposition}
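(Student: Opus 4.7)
The plan is to follow the blueprint of Proposition \ref{prop:conditions-squared-squared} essentially verbatim, with Lemma \ref{lemma:conditions-general-squared} playing the role of Lemma \ref{lemma:cocoercive}. First I would verify Assumption \ref{ass:phi} for the $L^2$-squared fidelity $\Phi(v)=\tfrac{1}{2}\norm{f-v}_Y^2$ on the Hilbert space $Y$. This is routine: $\Phi$ is continuous (hence weakly lower semicontinuous), proper, strongly convex, and coercive with $0 \in \Dom\Phi$; moreover $f \in \argmin\Phi$ and $f = K\bar f$ for $\bar f \defeq \invK f \in X$, which exists by $f \in \range{K}$ together with the right-inverse furnished by Assumption \ref{ass:a-k}.

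Next I would invoke Lemma \ref{lemma:conditions-general-squared}: its hypotheses are precisely Assumption \ref{ass:a-k}, Assumption \ref{ass:aj-zeroterm-approx}, the $(K,p)$-minimality of $u_{\doublebar\alpha}$, and the inequality \eqref{eq:general-interior-ass2t-summary}, all of which are in force (with $\bar\alpha = \doublebar\alpha$). The lemma therefore delivers the key one-point condition \eqref{eq:costf-tildeu} of Proposition \ref{prop:conditions-onealpha}. For part (i), Proposition \ref{prop:conditions-onealpha}\ref{item:a-exist-zero} then yields, for each $(\gamma,\epsilon) \in [\bar\gamma,\infty]\times[0,\bar\epsilon]$, a compact $\mathcal{K} \subset \SPACEalphaCompactPosInt$ satisfying \eqref{eq:interior-solution-compact}; Proposition \ref{prop:existence-compact} then extracts a minimiser $\hat\alpha \in \SPACEalphaCompactPosInt$. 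For part (ii), Assumption \ref{ass:sol-smoothness} combined with Proposition \ref{prop:conditions-onealpha}\ref{item:a-exist-barepsilon} furnishes the uniform bound \eqref{eq:interior-solution-compact-strong}, and the outer semicontinuity of $\SolM$ on $[\bar\gamma,\infty]\times[0,\bar\epsilon]$ then follows from Proposition \ref{prop:gamma-convergence}.

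The delicate point — which I would single out as the step demanding most care — is that Proposition \ref{prop:conditions-onealpha}\ref{item:a-exist-barepsilon} and Proposition \ref{prop:gamma-convergence} are nominally formulated under $(K,p)$-co-coercivity of $\costf$, a hypothesis absent from our statement. To substitute for it one leans on the strongly convex $L^2$-squared structure of $\Phi$: Lemma \ref{lemma:strong-approximation} then yields $Ku_{\doublebar\alpha,\gamma^k,\epsilon^k} \to Ku_{\doublebar\alpha}$ strongly in $Y$ along any sequence $(\gamma^k,\epsilon^k) \to (\gamma,\epsilon)$ in $[\bar\gamma,\infty]\times[0,\bar\epsilon]$. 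Combined with the $(K,p)$-minimality of $u_{\doublebar\alpha}$ and the first half of Lemma \ref{lemma:k-convergence}, this upgrades \eqref{eq:costf-tildeu} into the uniform version \eqref{eq:costf-tildeu-epsilon} needed inside the proof of Proposition \ref{prop:conditions-onealpha}\ref{item:a-exist-barepsilon}, and an analogous argument supplies the $\Gamma$-upper limit required in Proposition \ref{prop:gamma-convergence}. Once these replacements are recorded, the remainder is a direct assembly of already-proven results.
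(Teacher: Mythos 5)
Your assembly is the paper's proof almost verbatim: verify Assumption \ref{ass:phi} for the squared $L^2$ fidelity, use Lemma \ref{lemma:conditions-general-squared} to supply the one-point condition \eqref{eq:costf-tildeu} for Proposition \ref{prop:conditions-onealpha}, and conclude with Proposition \ref{prop:existence-compact} for existence and Proposition \ref{prop:gamma-convergence} for outer semicontinuity. You are also right to single out the co-coercivity issue: $(K,p)$-co-coercivity of $\costf$ genuinely appears among the hypotheses of Proposition \ref{prop:conditions-onealpha}\ref{item:a-exist-barepsilon} and of Proposition \ref{prop:gamma-convergence}, is absent from the present statement, and the paper's own (four-line) proof passes over this silently.

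However, your proposed repair does not close that gap. The first half of Lemma \ref{lemma:k-convergence} yields only $\costf(u_{\doublebar\alpha}) \le \liminf_{k} \costf(u^k)$ along sequences with $Ku^k \to Ku_{\doublebar\alpha}$ --- a \emph{lower} bound --- whereas upgrading \eqref{eq:costf-tildeu} to the uniform version \eqref{eq:costf-tildeu-epsilon}, and likewise the $\Gamma$-upper limit in Proposition \ref{prop:gamma-convergence}, require the \emph{upper} bound $\limsup_k \costf(u_{\doublebar\alpha,\gamma^k,\epsilon^k}) \le \costf(u_{\doublebar\alpha}) < \costf(\bar f)$ so that the strict inequality survives the smoothing. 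That upper bound is precisely what $(K,p)$-co-coercivity delivers in the second half of Lemma \ref{lemma:k-convergence}; it cannot be extracted from $(K,p)$-minimality, nor from the strong convexity of $\Phi$, which only produces the strong convergence $Ku^k \to Ku_{\doublebar\alpha}$ and says nothing about continuity of $\costf$ along such sequences. The intended reading of the proposition is that co-coercivity is carried tacitly: in its sole application, Theorem \ref{thm:hubercost-main}, Lemma \ref{lemma:modcost} first proves that $\costf=\costf_{\costlone}\circ K_0$ is $(K,1)$-co-coercive and that every $u \in X$ is $(K,1)$-minimal --- the latter also covering the $(K,q)$-minimality of $\bar f$, which Proposition \ref{prop:conditions-onealpha} and Lemma \ref{lemma:conditions-general-squared} need for part (i) (to obtain \eqref{eq:contradiction-tildeu} and exclude boundary parameters) and which both you and the statement omit from the hypothesis list. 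If you delete the incorrect patch and instead record these two hypotheses explicitly (or note that they hold in the application), your citation chain coincides with the paper's proof.
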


\begin{proof}
    It is easily checked that Assumption \ref{ass:phi} holds.
    Lemma \ref{lemma:conditions-general-squared} then verifies the remaining conditions of Proposition \ref{prop:conditions-onealpha}, which shows the existence of $\mathcal{K}$ in both cases. Finally, Proposition \ref{prop:existence-compact} shows the existence of $\hat \alpha \in \SPACEalphaCompactPosInt$ solving \eqref{eq:learn-numerical-single}.
    For the continuity of the solution map, we refer to Proposition \ref{prop:gamma-convergence}.
\end{proof}

\subsection{$L^2$-squared fidelity with Huberised $L^1$-type cost} 

We now study the Huberised total variation cost functional. We cannot in general prove that solutions $u_\alpha$ for small $\alpha$ are better than $f$. Consider, for example $f_0$ a step function, and $f$ a noisy version without the edge destroyed. The solution
$u_\alpha$ \emph{might} smooth out the edge, and then we might have $\radon{Du_\alpha-Df} \approx \radon{Du_\alpha}+\radon{Df} > \radon{Df_0-Df} \approx 0$. This destroys all hope of verifying the conditions of Lemma \ref{lemma:conditions-general-squared} in the general case. 
If we however modify the set of test functions in the definition of $\costhubertv$ to be discrete we can prove this bound. Alternatively, we could assume uniformly bounded divergence from the family of test functions. We have left this case out for simplicity, and prove our results for general $L^1$ costs with finite-dimensional $Z$.

\begin{lemma}[Conditions for $\costlone$ cost]
    \label{lemma:modcost}
    Suppose $Z$ is a reflexive Banach space, and $K_0: X \to Z$ is linear and bounded, and satisfies \eqref{eq:k0-condition}.
    Then, whenever \eqref{eq:regfix-interior-condition-huber} holds, $\costf(u) \defeq \costf_{\costlone}(K_0 u)$ is $(K, 1)$-co-coercive and \eqref{eq:general-interior-ass2t-summary} holds for some $\doublebar\alpha \in \SPACEalphaPosInt$ with both $u_{\doublebar\alpha}$ and $\bar f$ being $(K, 1)$-minimal.
\end{lemma}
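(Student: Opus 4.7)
The proof naturally splits into three pieces, matching the three claims: $(K,1)$-co-coercivity of $\costf=\costf_{\costlone}\circ K_0$; $(K,1)$-minimality of both $\bar f$ and the solution $u_{\doublebar\alpha}$; and the strict inequality \eqref{eq:general-interior-ass2t-summary} for a suitably chosen $\doublebar\alpha$. The first two are purely structural consequences of the dual definition of $\costf_{\costlone}$ together with the factorisation forced by \eqref{eq:k0-condition}, whereas the third is obtained by scaling $\doublebar\alpha = s\bar\alpha$ and taking $s\downto 0$, where $\bar\alpha$ is the regulariser in \eqref{eq:regfix-interior-condition-huber}.

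For co-coercivity, the key is that $\costf_{\costlone}$ is the supremum over the unit ball of $Z^*$ of affine-quadratic functionals, so every $\lambda\in\subdiff\costf_{\costlone}(z)$ satisfies $\norm{\lambda}_{Z^*}\le 1$ and $\costf_{\costlone}$ itself is $1$-Lipschitz. For any $u,u^*\in X$, $\lambda^*\in\subdiff\costf_{\costlone}(K_0u^*)$ and $\lambda\in\subdiff\costf_{\costlone}(K_0u)$, the subgradient inequality and Cauchy--Schwarz give
\[
    \costf_{\costlone}(K_0u)-\costf_{\costlone}(K_0u^*)
    \le \dprod{\lambda^*}{K_0(u-u^*)} + 2\norm{K_0(u-u^*)}_Z
    \le \dprod{K_0^*\lambda^*}{u-u^*} + 2C_0\norm{K(u-u^*)}_Y,
\]
where the last step invokes \eqref{eq:k0-condition}; this is $(K,1)$-co-coercivity with constant $2C_0$. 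For minimality, \eqref{eq:k0-condition} forces $\nullspace{K}\subset\nullspace{K_0}$, so $K_0=(K_0\invK)K$ on all of $X$ and hence $K_0^*=K^*(K_0\invK)^*$. Given any $u\in X$ and any $\lambda_u\in\subdiff\costf_{\costlone}(K_0u)$, setting $\varphi_u:=(K_0\invK)^*\lambda_u\in Y^*$ yields $K^*\varphi_u=K_0^*\lambda_u\in\subdiff \costf(u)$, and the subgradient inequality at $u$ then gives $(K,1)$-minimality with $C=0$; this covers $\bar f$ and every candidate $u_{\doublebar\alpha}$.

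For the third claim, let $\bar\alpha\in\SPACEalphaPosInt$, $t\in(0,1/(2C_0)]$ and $\lambda_{\bar f}\in\subdiff\costf_{\costlone}(K_0\invK f)$ realise \eqref{eq:regfix-interior-condition-huber}, and set $\doublebar\alpha=s\bar\alpha$ for small $s>0$. By positive homogeneity $\regfMARG[s\bar\alpha]{\freevar}=s\regfMARG[\bar\alpha]{\freevar}$, the inequality \eqref{eq:general-interior-ass2t-summary} becomes, after dividing by $s$,
\[
    \regfMARG[\bar\alpha]{Ku_{s\bar\alpha}} > \regfMARG[\bar\alpha]{f-t\varphi_{u_{s\bar\alpha}}}.
\]
Lemma \ref{lemma:alpha-to-zero} supplies $Ku_{s\bar\alpha}\to f$ strongly in $Y$, so the continuity half of Lemma \ref{lemma:t-lsc} gives $\regfMARG[\bar\alpha]{Ku_{s\bar\alpha}}\to\regfMARG[\bar\alpha]{f}$, and \eqref{eq:k0-condition} promotes this to $K_0u_{s\bar\alpha}\to K_0\invK f$ in $Z$. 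Since $Z$ is finite-dimensional in the intended applications of Section \ref{sec:main}, any bounded selection $\lambda_{u_{s\bar\alpha}}$ has a convergent subsequence, whose limit lies in $\subdiff\costf_{\costlone}(K_0\invK f)$ by outer semicontinuity of the subdifferential graph of a continuous convex function.

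The main technical obstacle is the right-hand side of the displayed inequality, and consists of two linked difficulties. First, one must arrange the selection so that $\varphi_{u_{s\bar\alpha}}\to\varphi_{\bar f}:=(K_0\invK)^*\lambda_{\bar f}$; this \emph{inner}-semicontinuity statement for $\subdiff\costf_{\costlone}$ is not automatic and hinges either on differentiability of $\costf_{\costlone}$ at $K_0\invK f$ (forcing uniqueness of the subgradient) or on a maximal-monotonicity selection argument in finite dimensions. Second, because $\regfMARG[\bar\alpha]{\freevar}$ is only lower semicontinuous on $Y$, the strong convergence $\varphi_{u_{s\bar\alpha}}\to\varphi_{\bar f}$ gives the wrong direction of inequality a priori; one must upgrade to continuity of the convex functional $\regfMARG[\bar\alpha]{\freevar}$ at the limit point $f-t\varphi_{\bar f}$, which follows from its finiteness there (guaranteed by \eqref{eq:regfix-interior-condition-huber}) and the standard fact that proper convex functions are continuous on the interior of their effective domain. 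Combining these two ingredients yields $\limsup_{s\downto 0}\regfMARG[\bar\alpha]{f-t\varphi_{u_{s\bar\alpha}}}\le\regfMARG[\bar\alpha]{f-t\varphi_{\bar f}}$, so the strict gap from \eqref{eq:regfix-interior-condition-huber} propagates to \eqref{eq:general-interior-ass2t-summary} for all sufficiently small $s>0$.
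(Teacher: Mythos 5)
Your overall strategy coincides with the paper's: the dual-ball estimate for $(K,1)$-co-coercivity, the factorisation $K_0=(K_0\invK)K$ (valid since \eqref{eq:k0-condition} forces $\nullspace{K}\subset\nullspace{K_0}$) giving $(K,1)$-minimality of every $u \in X$, and the scaling $\doublebar\alpha=s\bar\alpha$ with $s\downto 0$ combined with Lemma \ref{lemma:alpha-to-zero} for \eqref{eq:general-interior-ass2t-summary}. The first two parts are correct. The third part has genuine gaps at exactly the two places you flag as the main obstacle, and your proposed resolutions do not go through as stated. For the convergence of $\varphi_{u_{s\bar\alpha}}=(K_0\invK)^*\lambda_{u_{s\bar\alpha}}$ you assume $Z$ finite-dimensional, which is not what the lemma asserts ($Z$ reflexive). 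The paper's mechanism is the compactness of $K_0$ (a hypothesis of Theorem \ref{thm:hubercost-main} that this lemma tacitly inherits): $\lambda_{s^k\bar\alpha}\weaktostar\lambda_0$ in $Z^*$ by boundedness of the dual ball, $\lambda_0\in\subdiff\costf_{\costlone}(K_0\bar f)$ by outer semicontinuity of the subdifferential under strong convergence $K_0u_{s^k\bar\alpha}\to K_0\bar f$, and compactness of $K_0^*$ upgrades $(K_0\invK)^*\lambda_{s^k\bar\alpha}\to(K_0\invK)^*\lambda_0$ to strong convergence in $Y$. The inner-semicontinuity issue you raise is real but the paper sidesteps it: it does not prescribe $\lambda_{\bar f}$ in advance, but takes whatever weak* limit $\lambda_0$ arises and reads \eqref{eq:regfix-interior-condition-huber} (equivalently \eqref{eq:hubercost-interior-condition-k0}) as a condition on that $\lambda_0$. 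Your alternative fixes (differentiability of $\costf_{\costlone}$, a monotone-selection argument) are neither carried out nor needed.

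The second gap is your justification of $\limsup_{s\downto 0}\regfMARG[\bar\alpha]{f-t\varphi_{u_{s\bar\alpha}}}\le\regfMARG[\bar\alpha]{f-t\varphi_0}$ via ``proper convex functions are continuous on the interior of their effective domain'': for the regularisers at hand (e.g.\ $\regfMARG{\freevar}=\bar\alpha\TV(\freevar)$ on $Y=L^2(\Omega)$) the effective domain has empty interior in $Y$, and finiteness at a single point does not give upper semicontinuity of a convex function there. The paper closes this by first proving $\varphi_{u_{s\bar\alpha}}\in\range{K}$ (a short separate argument: a component of $\varphi_{s\bar\alpha}$ orthogonal to $\range{K}$ leads to a contradiction using \eqref{eq:k0-condition}), so that $f-t\varphi_{u_{s\bar\alpha}}$ stays in $\range{K}$, and then invoking the continuity of $\regfMARG[\bar\alpha]{\freevar}$ with respect to strong convergence in $\range{K}$ established in Lemma \ref{lemma:t-lsc} (where $\regfMARG[\bar\alpha]{\freevar}$ restricted to $\range{K}$ is finite-valued and convex). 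Both the membership $\varphi_{u_{s\bar\alpha}}\in\range{K}$ and the appeal to Lemma \ref{lemma:t-lsc} are missing from your argument and are needed to close it.
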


\begin{proof}
    Denote
    \[
        B \defeq \{ \lambda \in Z^* \mid \norm{\lambda}_{Z^*} \le 1 \}.
    \]
    We first verify $(I, 1)$-co-coercivity of $\costf_{\costlone}$.
    Let $v, v^* \in Z$, and $\lambda^* \in B$ be such that $\lambda^* \in \subdiff \costf_{\costlone}(v^*)$.
    Clearly $\lambda^*$ achieves the maximum for $\costf_{\costlone}(v^*)$.
    Let $\lambda \in B$ achieve the maximum for $\costf_{\costlone}(v)$.
    Then
    \begin{equation}
        \label{eq:l1-1-coco}
        \begin{split}
        \costf_{\costlone}(v) - \costf_{\costlone}(v^*) 
        &
        \le
        \dprod{\lambda}{v-f_0} - \dprod{\lambda}{v^*-f_0}
        \\
        &
        =
        \dprod{\lambda^*}{v-v^*}
        +
        \dprod{\lambda-\lambda^*}{v-v^*}
        \\
        &
        \le
        \dprod{\lambda^*}{v-v^*}
        +
        2 \sup_{\lambda' \in B} \norm{\lambda'}\norm{v-v^*}
        \\
        &
        \le
        \dprod{\lambda^*}{v-v^*}
        +
        2 \norm{v-v^*}.
        \end{split}
    \end{equation}
    This proves $(I, 1)$-co-coercivity of $\costf_{\costlone}$. $(K, 1)$-co-coercivity of $F$ now follows similarly to the argument in the proof of Theorem \ref{thm:l2cost-main}, using \eqref{eq:k0-condition}.

    Analogously, taking the traingle inequality in \eqref{eq:l1-1-coco} in the opposite direction, we show that every $u \in X$ is $(K, 1)$-minimal.
    Therefore, in particular both $\bar f$ and $u_{\doublebar\alpha}$ are $(K, 1)$-minimal

    To verify \eqref{eq:general-interior-ass2t-summary}, it is enough to verify \eqref{eq:regfix-interior-condition-limiting}.
    Similarly to the proof of Theorem \ref{thm:l2cost-main}, using \eqref{eq:k0-condition}, we
    verify that $K^* \varphi_{s\bar\alpha} \in \subdiff \costf(u_{s\bar\alpha})$ exists, and
    \[
        \varphi_{s\bar\alpha} \in \barK \subdiff \costf(u_{s\bar\alpha}) = (K_0\invK)^* \lambda_{s\bar\alpha},
    \]
    where $\lambda_{s\bar\alpha} \in B$ achieves the maximum for $\costf_0(K_0 u_{s\bar\alpha}-f_0)$. In fact, $\varphi_{s\bar\alpha} \in \range{K}$. If this would not hold, we could find $v \perp \range{K}$ such that
    \[
        %\begin{split}
            0 
            < \dprod{v}{\varphi_{s\bar\alpha}}
            = \dprod{K_0 \invK v}{\lambda_{s\bar\alpha}}
            \le \norm{K_0 \invK v}\norm{\lambda_{s\bar\alpha}}
            \le C \norm{K \invK v}\norm{\lambda_{s\bar\alpha}}.
        %\end{split}
    \]
    But, for any $v' \in \range{K}$,
    \[
        \dprod{v'}{K\invK v}=\dprod{(K\invK)^* v'}{v}=\dprod{v'}{v}=0.
    \]
    Therefore $\norm{K \invK v}=0$, and we reach a contradiction unless $\lambda_{s\bar\alpha}=0$, that is $\varphi_{s\bar\alpha}=0 \in \range{K}$.

    %By the finite-dimensionality of $Z$, we observe that $\subdiff \costf_0$ is outer semicontinous with respect to weak* convergence in $Z$. 
    As is easily verified, $\subdiff F_{\costlone}$ is outer semicontinuous with respect to strong convergence in the domain $Z$ and weak* convergence in the codomain $Z^*$. 
    That is, given $v^k \to v$ and $z^k \weaktostar z$ with $z^k \in \subdiff F_{\costlone}(u^k)$, we have $z \in \subdiff F_{\costlone}(v)$.
    %It follows that $\subdiff F$ is outer semicontinuous with respect to weak* convergence in $X$ and $Z^*$. 
    %That is, given $u^k \weaktostar u$ and $z^k \weaktostar z$ with $z^k \in \subdiff F(u^k)$, we have $z \in \subdiff F(u)$.
    By Lemma \ref{lemma:alpha-to-zero} and \eqref{eq:k0-condition}, we have $K_0 u^k \to K_0 \bar f$ strongly in $Z$.
    Since $B$ is bounded, we may therefore find a sequence $s^k \downto 0$ with $\lambda_{s^k\bar\alpha} \weaktostar \lambda_0 \in \subdiff \costf(\bar f)$ weakly* in $Z^*$.
    %Now, observe that $K_0$ is compact thanks to the compactness of $K$ and \eqref{eq:k0-condition}. Then also $K_0^*$ is compact \cite[Theorem 4.19]{rudin2006functional}.
    Since by assumption $K_0$ is compact, then also $K_0^*$ is compact \cite[Theorem 4.19]{rudin2006functional}.
    Consequently $\varphi_{s^k\bar\alpha} \to \varphi_0 \defeq (K_0\invK)^* \lambda_0$
    strongly in $Y$ after possibly moving to an unrelabelled subsequence.
    %By the finite-dimensionality, in fact $\varphi_{s\bar\alpha}=\varphi_0$ for small enough $s>0$. 
    Let us now consider the right hand side of \eqref{eq:general-interior-ass2t-summary} for $\doublebar\alpha=s^k\bar\alpha$. Since $f=K \bar f$, and we have proved that $\varphi_{u_{s^k\bar \alpha}} \in \range{K}$, Lemma \ref{lemma:t-lsc} shows that
    \begin{equation}
        \notag
        \lim_{k \to 0} \regfMARG[\bar\alpha]{f-t \varphi_{u_{s^k\bar \alpha}}}
        = \regfMARG[\bar\alpha]{f-t \varphi_0}.
    \end{equation}
    Minding the discussion surrounding \eqref{eq:regfix-interior-condition-limiting}, we observe that choosing $\doublebar\alpha=s^k\bar\alpha$ for large enough $k>0$, \eqref{eq:regfix-interior-condition-limiting} is implied by \eqref{eq:regfix-interior-condition-huber}. 
    %Finally, the arguments in \eqref{eq:l1-1-coco} with $\lambda^* = \lambda_{\doublebar\alpha}$, but with the triangle inequality taken in the opposite direction, show the $(I, 1)$-minimality of $v^* = K_0 u_{\doublebar\alpha}$. By extension then $u_{\doublebar\alpha}$ is again $(K, 1)$-minimal.
\end{proof}

\begin{proof}[Proof of Theorem \ref{thm:hubercost-main}]
    From the proof of Lemma \ref{lemma:modcost}, we observe that
    \eqref{eq:regfix-interior-condition-huber}, can be expanded as
    \begin{equation}
        \notag
        \regfMARG{f} > \regfMARG{f - t (K_0\invK)^* \lambda_0},
    \end{equation}
    where $\lambda_0 \in V$ with $\lambda_0 \in \subdiff \costf_{\costlone}(K_0 \bar f)$. 
    As in the proof of Theorem \ref{thm:l2cost-main}, this is in fact independent of the choice of $\bar f$, so may replace $\bar f = \invK f$.
    Thus $\lambda_0 \in \subdiff \costf_{\costlone}(K_0 \invK f)$.
    By Lemma \ref{lemma:modcost}, the conditions of Proposition \ref{prop:conditions-general-squared} are satisfied, so we may apply it together with Proposition \ref{prop:existence-compact} to conclude the proof.
\end{proof}

\begin{remark}
    The considerations of Remark \ref{remark:phi-pseudol2} also apply to Lemma \ref{lemma:conditions-general-squared} and consequently Theorem \ref{thm:hubercost-main}.
    That is, the results hold for the cost
    \begin{equation}
        \label{eq:phi-pseudo-l2}
        \Phi(v)=\frac{1}{2}\norm{v}_{\hat Y}^2 - \dprod{f}{v}_{Y^*,Y},
        \quad (v \in Y),
    \end{equation}
    where $\hat Y \supset Y$ is a Hilbert space, $f \in Y^*$, and $Y$ a reflexive Banach space.
    Indeed, again the specific form of $\Phi$ was only used for the optimality condition \eqref{eq:psi-alpha-eq}, which is also satisfied by the form \eqref{eq:phi-pseudo-l2}.
\end{remark}

%%%
\section{Numerical verification and insight}\label{sec:numerics}
%%%

In order to verify the above theoretical results, and to gain
further insight into the cost map $\ValM{\gamma,\epsilon}$, we computed the values for a grid of values of $\alphavec$, for both $\TV$ and $\TGV^2$ denoising, and $\costltwo$ and $\costhubertv$ cost functionals.
This we did for two different images, the parrot image depicted in Figure \ref{fig:dataset2} and the Scottish southern uplands image depicted in Figure \ref{fig:dataset4}.
The results are visualised in Figure \ref{fig:landscape-tv} and Figure \ref{fig:landscape-tgv2}, respectively.
For $\TV$, the parameter range was 
\[
    \alpha \in U \defeq \{0.001,0.01, 0.02, \ldots 0.5\}/\DIMdomain
\]
(altogether 51 values), where $\DIMdomain=256$ is the edge length of the rectangular test image. For $\TGV^2$ the parameter range was $\alphavec \in U \times (U/\DIMdomain)$. We set $\gamma=100$, $\epsilon=1\ee^{-10}$, and computed the denoised image $u_{\alpha,\gamma,\epsilon}$ by the SSN denoising algorithm that we report separately in \cite{tuomov-tgvlearn} with more extensive numerical comparisons and further applications.
%Section \ref{sec:denoising} with the parameters of the
%algorithm the same as in Section \ref{sec:denoising-numerical}.

As we can see, the optimal $\alphavec$ clearly seems to
lie away from the boundary of the parameter domain
$\SPACEalphaPos$, confirming the theoretical studies
for the squared $L^2$ cost $\costltwo$, and the
discrete version of the Huberised $\TV$ cost $\costhubertv$.
The question remains: do these results hold for the full
Huberised $\TV$?

We further observe from the numerical landscapes that the
cost map $\ValM{\gamma,\epsilon}$ is roughtly quasiconvex in the
variable $\alpha$ for both $\TV$ and $\TGV^2$. In the
$\beta$ variable of $\TGV^2$ the same does not seem to hold,
as around the optimal solutoin the level sets tend to expand 
along $\alpha$ as $\beta$ increases, until starting to reach 
their limit along $\beta$. However, the level sets around the
optimal solution also tend to be very elongated on the
$\beta$ axes. This suggests that $\TGV^2$ is reasonably robust
with respect to choice of $\beta$, as long as it is in the
right range.

\begin{figure}[t]
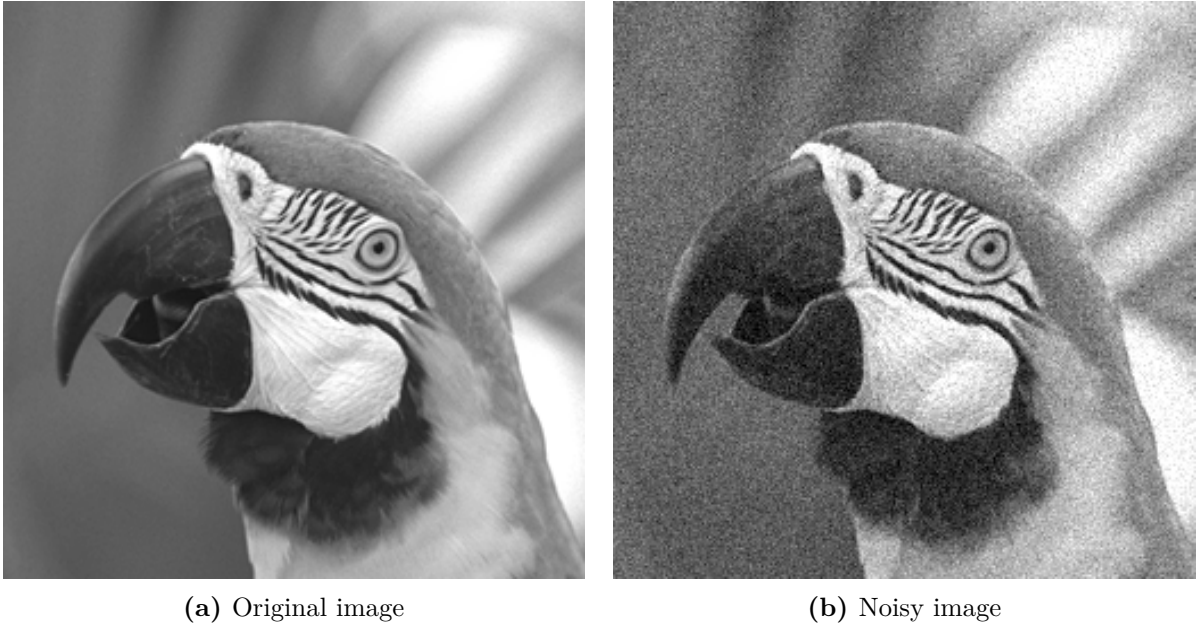

    \centering
    \def\subfigprefix{fig:dataset2}
    \inplot{kodim23gray-crop}
            {Original image}
    \inplot{kodim23gray-crop-noisy}
            {Noisy image}
    \caption{Parrot test image}
    \label{\subfigprefix}
\end{figure}

\begin{figure}[t]
    \centering
    \def\subfigprefix{fig:dataset4}
    \inplot{uplands}
            {Original image}
    \inplot{uplands-noisy}
            {Noisy image}
    \caption{Uplands test image}
    \label{\subfigprefix}
\end{figure}

\begin{figure}[t]
    \centering
    \begin{subfigure}[t]{0.47\textwidth}
        \includegraphics[width=\textwidth]{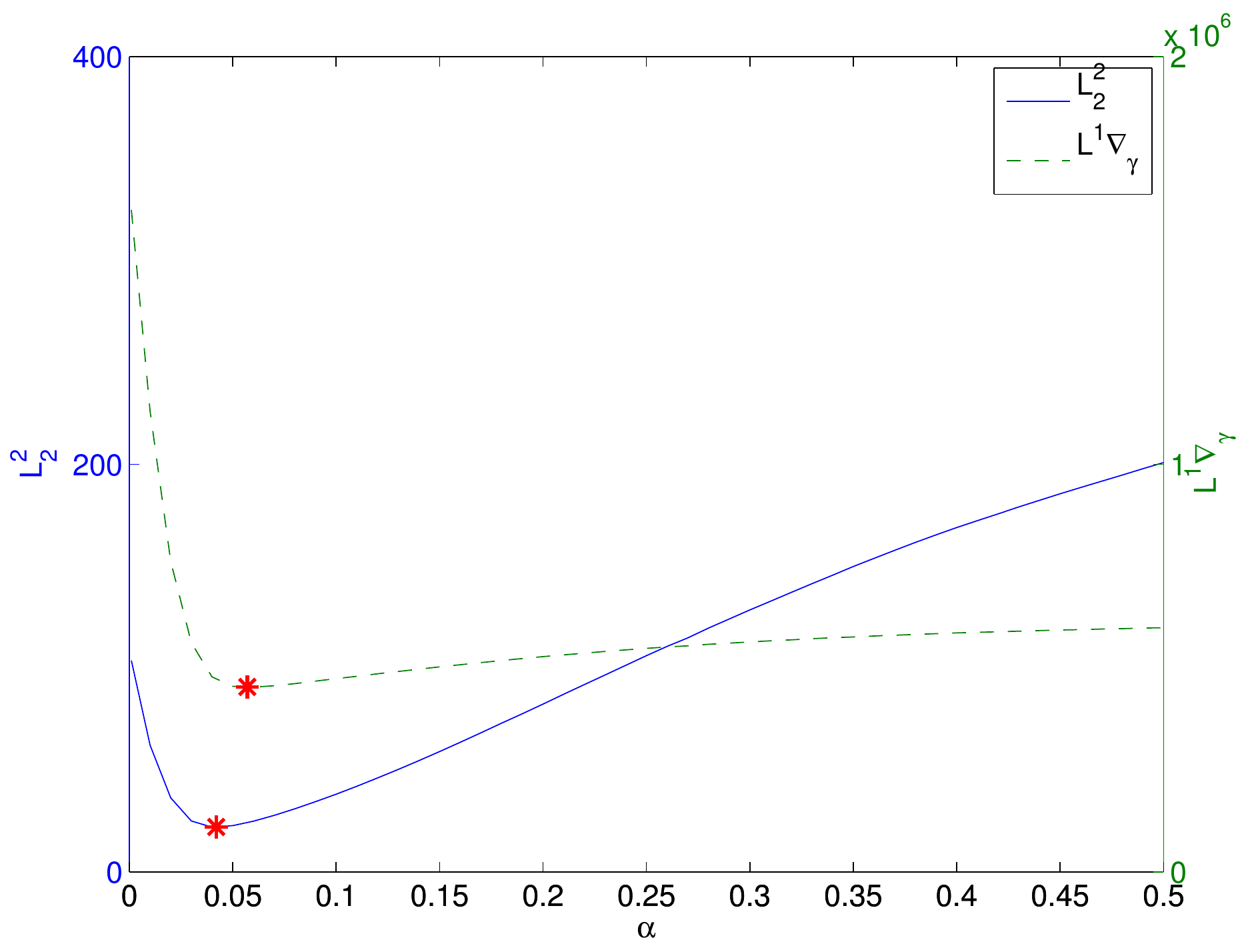}
        \caption{Parrot, $\TV$, all cost functionals}
    \end{subfigure}
    \begin{subfigure}[t]{0.47\textwidth}
        \includegraphics[width=\textwidth]{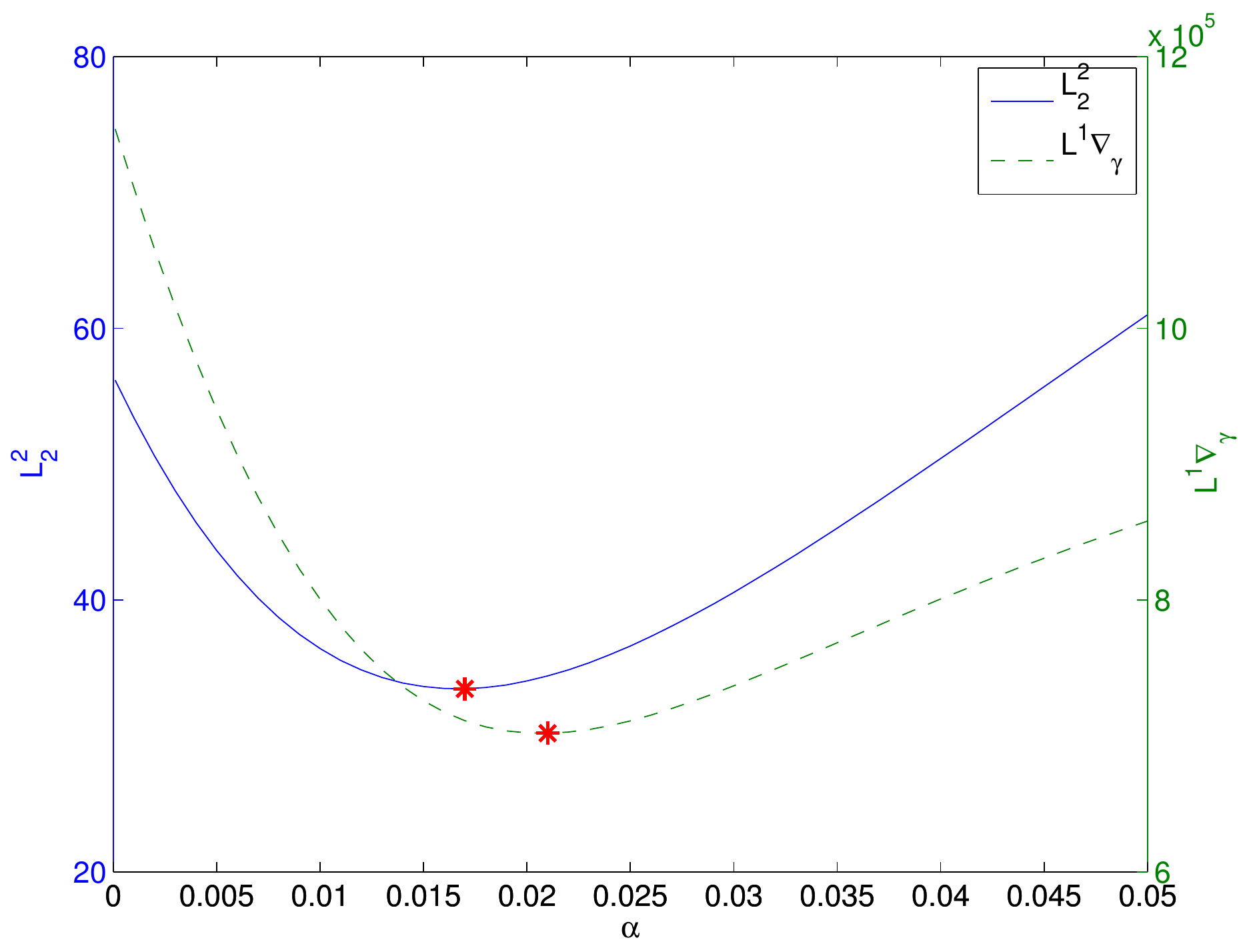}
        \caption{Uplands, $\TV$, all cost functionals}
    \end{subfigure}
    \caption{Cost functional value versus $\alpha$ for $\TV$ denoising, 
        for both the parrot and uplands test images, 
        %for all three, {\costltwo}, {\costhubertv}, and {\costsymbreg} 
        for both {\costltwo} and {\costhubertv}
        cost functionals.
        For the parrot image, optimal $\alpha$ from %Table \ref{table:res-dataset2} 
        \cite{tuomov-tgvlearn}
        for the initialisation $0.1/\DIMdomain$, resp.~$(1/\DIMdomain^2, 0.1/\DIMdomain)$, is indicated by an asterisk.
        For the landscape image, optimal $\alpha$ from %Table \ref{table:res-dataset2} 
        \cite{tuomov-tgvlearn}
        for the initialisation $0.01/\DIMdomain$, resp.~$(0.1/\DIMdomain^2, 0.01/\DIMdomain)$, is indicated by an asterisk.
        }
    \label{fig:landscape-tv}
\end{figure}

\begin{figure}[t]
    \begin{subfigure}[t]{0.47\textwidth}
        \includegraphics[width=\textwidth]{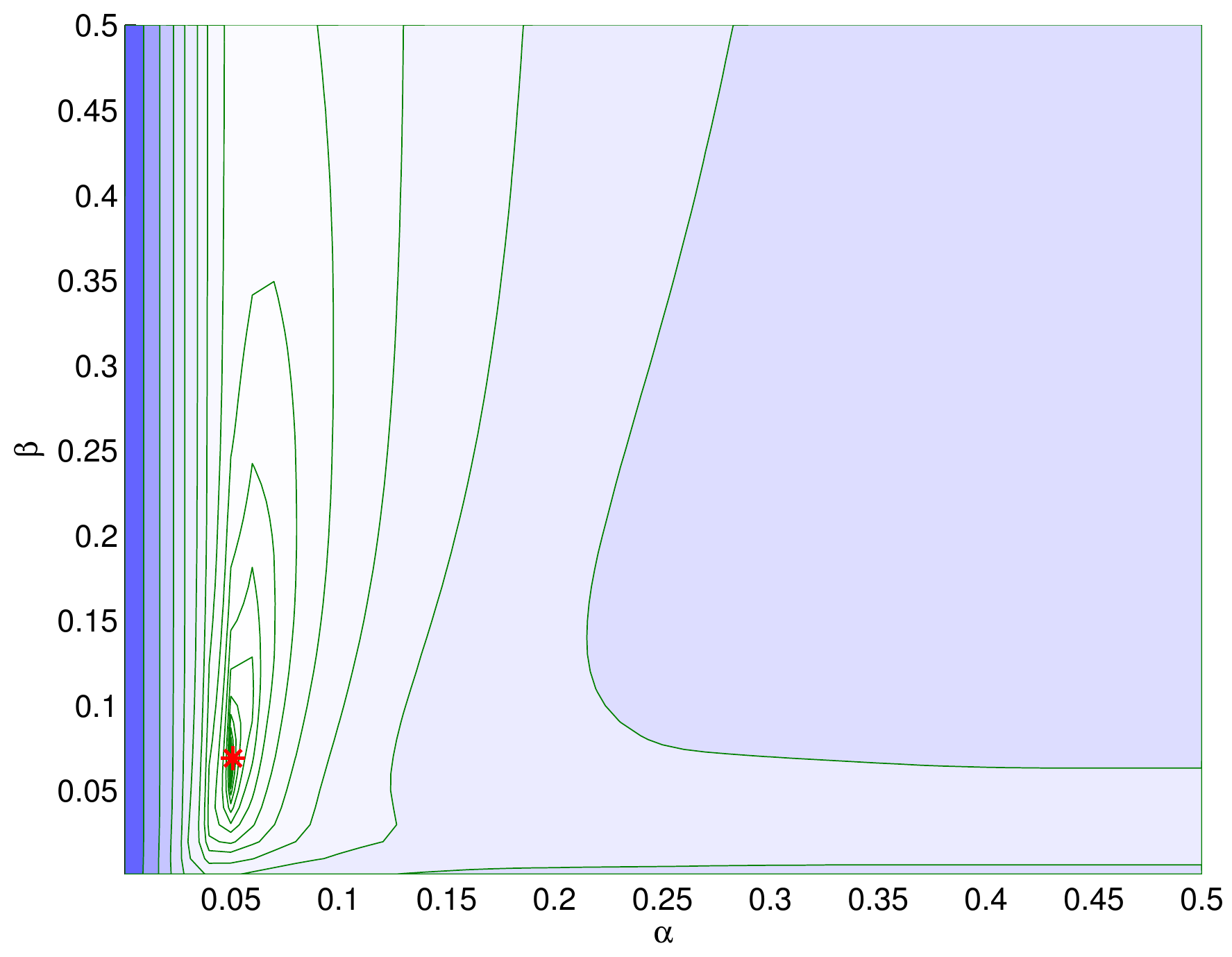}
        \caption{Parrot, $\TGV^2$, {\costhubertv} cost functional}
    \end{subfigure}
    \hfill
    \begin{subfigure}[t]{0.47\textwidth}
        \includegraphics[width=\textwidth]{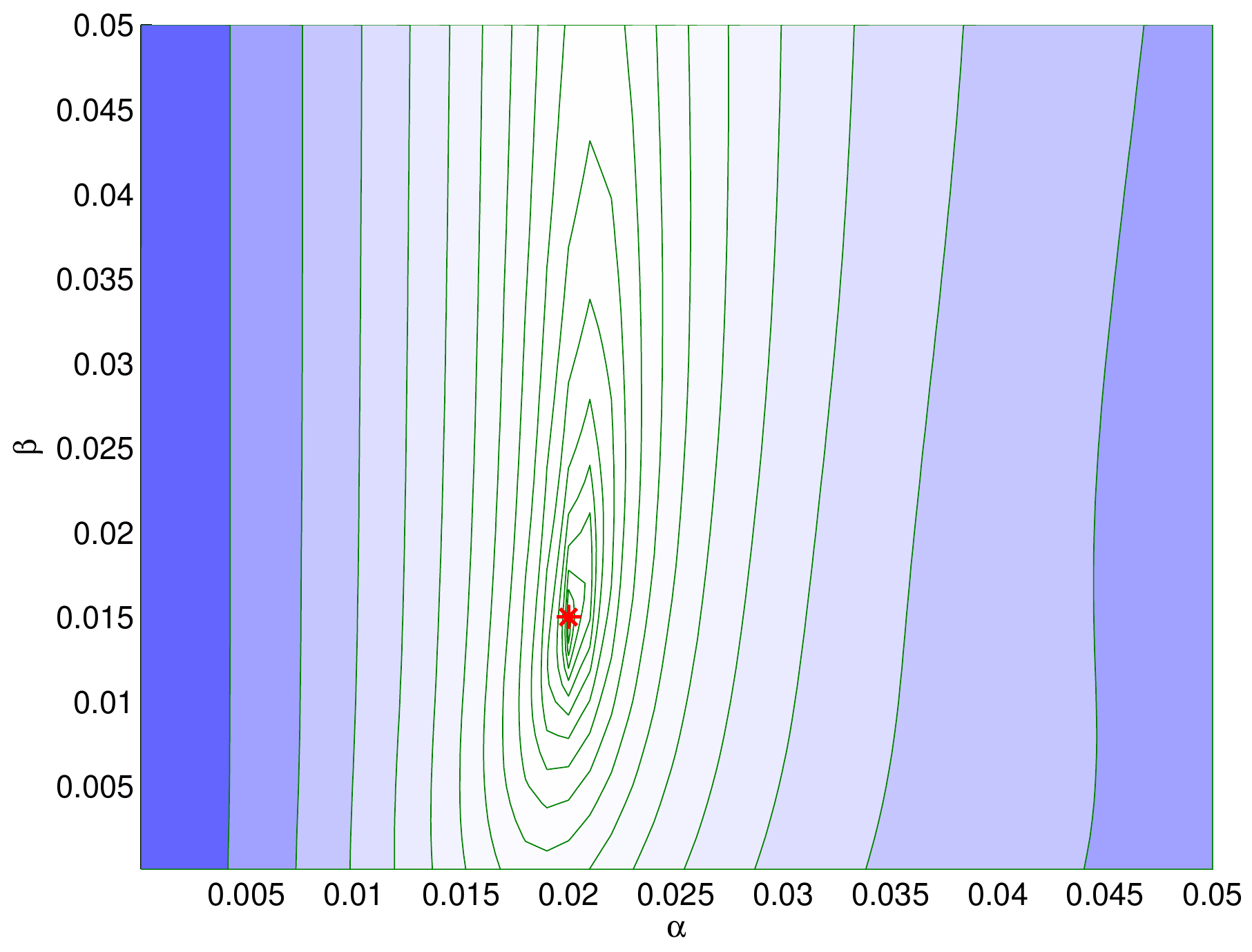}
        \caption{Uplands, $\TGV^2$, {\costhubertv} cost functional}
    \end{subfigure}
    \begin{subfigure}[t]{0.47\textwidth}
        \includegraphics[width=\textwidth]{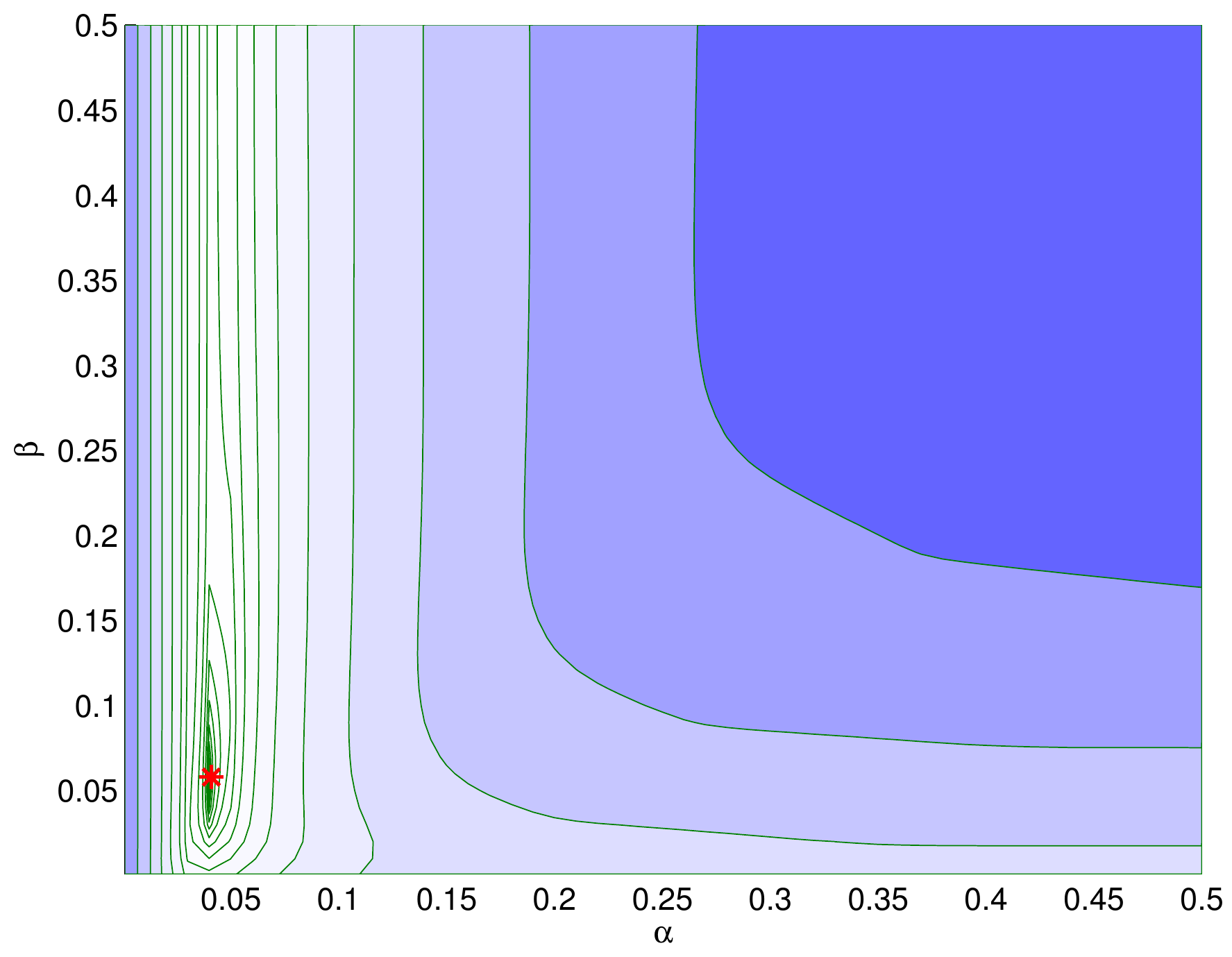}
        \caption{Parrot, $\TGV^2$, {\costltwo} cost functional}
    \end{subfigure}
    \hfill
    \begin{subfigure}[t]{0.47\textwidth}
        \includegraphics[width=\textwidth]{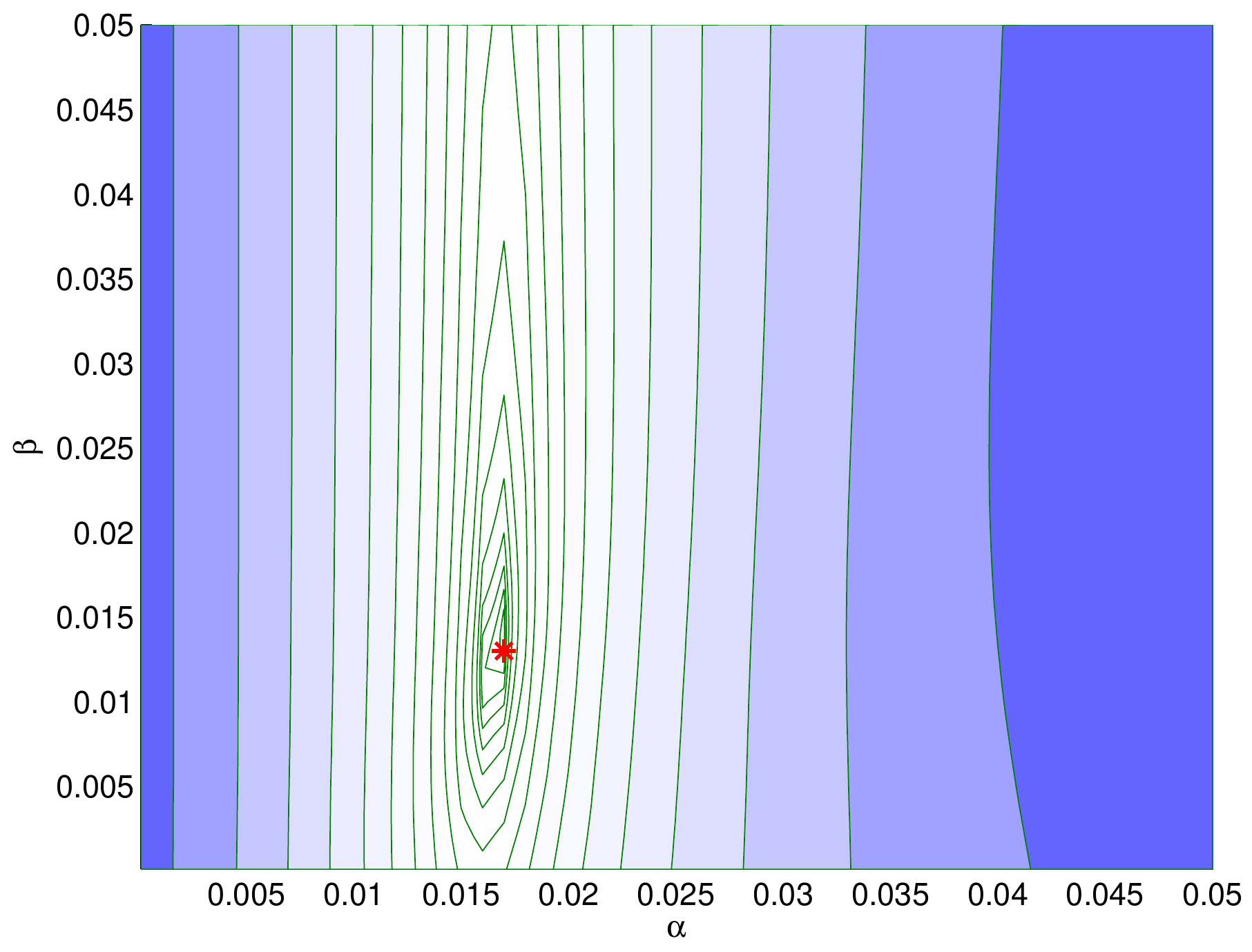}
        \caption{Uplands, $\TGV^2$, {\costltwo} cost functional}
    \end{subfigure}
    %\begin{subfigure}[t]{0.47\textwidth}
    %    \includegraphics[width=\textwidth]{resimg/SCAN-tgv2-dataset2-symbregman}
    %    \caption{Parrot, $\TGV^2$, {\costsymbreg} cost functional}
    %\end{subfigure}
    %\hfill
    %\begin{subfigure}[t]{0.47\textwidth}
    %    \includegraphics[width=\textwidth]{resimg/SCAN-tgv2-dataset4-symbregman}
    %    \caption{Uplands, $\TGV^2$, {\costsymbreg} cost functional}
    %\end{subfigure}
    \caption{Cost functional value versus $\alpha$ for $\TGV^2$ denoising, 
        for both the parrot and uplands test images, 
        %for all three, {\costltwo}, {\costhubertv}, and {\costsymbreg} 
        for both {\costltwo} and {\costhubertv}
        cost functionals.
        The illustrations are contour plots of function value versus $\alphavec=(\beta, \alpha)$.
        For the parrot image, optimal $\alphavec$ from
        %Table \ref{table:res-dataset2} 
        \cite{tuomov-tgvlearn}
        for the initialisation $0.1/\DIMdomain$, resp.~$(1/\DIMdomain^2, 0.1/\DIMdomain)$, is indicated by an asterisk.
        For the landscape image, optimal $\alphavec$ from
        %Table \ref{table:res-dataset2} 
        \cite{tuomov-tgvlearn}
        for the initialisation $0.01/\DIMdomain$, resp.~$(0.1/\DIMdomain^2, 0.01/\DIMdomain)$, is indicated by an asterisk.
        }
    \label{fig:landscape-tgv2}
\end{figure}

\section*{Acknowledgements}

This project has been supported by King Abdullah University of Science and Technology (KAUST) Award No.~KUK-I1-007-43, EPSRC grants Nr.~EP/J009539/1 ``Sparse \& Higher-order Image Restoration'', and Nr.~EP/M00483X/1 ``Efficient computational tools for inverse imaging problems'', the Escuela Politécnica Nacional de Quito under award PIS 12-14 and the MATHAmSud project SOCDE ``Sparse Optimal Control of Differential Equations''. While in Quito, T.~Valkonen has moreover been supported by a Prometeo scholarship of the Senescyt (Ecuadorian Ministry of Science, Technology, Education, and Innovation).

\section*{A data statement for the EPSRC}

This is a theoretical mathematics paper, and any data used merely serves as a demonstration of mathematically proven results. Moreover, photographs that are for all intents and purposes statistically comparable to the ones used for the final experiments, can easily be produced with a digital camera, or downloaded from the internet. This will provide a better evaluation of the results than the use of exactly the same data as we used.

%\bibliography{abbrevs,bib,bib-own}
%
% Just insert the .bbl manually here because arxiv is fucked up and can't do multiple bib files
%
 \providecommand{\homesiteprefix}{http://iki.fi/tuomov/mathematics}
  \providecommand{\eprint}[1]{\href{http://arxiv.org/abs/#1}{arXiv:#1}}

\end{document}